\newcommand{\comment}[1]{{}}
\theoremstyle{plain}
\newtheorem{theorem}{Theorem}
\numberwithin{theorem}{section}
\newtheorem{lemma}[theorem]{Lemma}
\newtheorem{proposition}[theorem]{Proposition}
\newtheorem{prop-def}[theorem]{Proposition-Definition}
\newtheorem{corollary}[theorem]{Corollary}
\newcounter{savetheorem} 
\newcounter{savesection}
\newcounter{intro-section-counter}
\newcounter{SMTT-counter}
\newcounter{WSMTT-counter}
\newcounter{shifted-counter}
\newcounter{zpoly-counter}
\theoremstyle{definition}
\newtheorem{definition}[theorem]{Definition}
\newtheorem{example}[theorem]{Example}
\theoremstyle{remark}
\newtheorem{remark}[theorem]{Remark}
\newcommand{\bd}{\partial} 
\newcommand{\cbd}{\partial^*}
\newcommand{\wbd}{\hat\partial} 
\newcommand{\wcbd}{\hat\partial^*}
\newcommand{\Abd}{\boldsymbol{\partial}}  
\newcommand{\Acbd}{\boldsymbol{\partial}^*}
\newcommand{\stot}{s^{\rm tot}} 
\newcommand{\sud}{s^{\rm ud}} 
\newcommand{\sdu}{s^{\rm du}} 
\newcommand{\Ltot}{L^{\rm tot}} 
\newcommand{\Lud}{L^{\rm ud}} 
\newcommand{\Ldu}{L^{\rm du}} 
\newcommand{\Swud}{\hat{s}^{ud}} 
\newcommand{\Lwud}{\hat{L}^{\rm ud}} 
\newcommand{\Stot}{\mathbf{s}^{tot}} 
\newcommand{\Sud}{\mathbf{s}^{ud}} 
\newcommand{\Sdu}{\mathbf{s}^{du}} 
\newcommand{\LLtot}{\mathbf{L}^{\rm tot}} 
\newcommand{\LLud}{\mathbf{L}^{\rm ud}} 
\newcommand{\LLdu}{\mathbf{L}^{\rm du}} 
\newcommand{\LL}{\mathbf{L}} 
\newcommand{\del}{\mathop{\rm del}\nolimits}
\newcommand{\id}{\mathop{\rm id}\nolimits}
\newcommand{\im}{\mathop{\rm im}\nolimits}
\newcommand{\link}{\mathop{\rm link}\nolimits}
\newcommand{\rank}{\mathop{\rm rank}\nolimits}
\newcommand{\Z}{\mathbb{Z}}
\newcommand{\SST}{{\mathcal T}}
\newcommand{\HH}{\tilde H} 
\newcommand{\Cc}{\mathbb{C}}
\newcommand{\Nn}{\mathbb{N}}
\newcommand{\Qq}{\mathbb{Q}}
\newcommand{\Zz}{\mathbb{Z}}
\newcommand{\abs}[1]{\lvert #1 \rvert}
\newcommand{\cpless}{\prec} 
\newcommand{\cpleq}{\preceq} 
\newcommand{\cpmoreeq}{\succeq} 
\newcommand{\dju}{\,\dot{\cup}\,} 
\newcommand{\betti}{\tilde\beta} 
\newcommand{\zeq}{\overset{\circ}{=}}
\newcommand{\st}{\colon}
\newcommand{\0}{\emptyset}
\newcommand{\shgen}[1]{{\langle{#1}\rangle}}  
\newcommand{\fld}{\Bbbk}
\newcommand{\sign}{\varepsilon}
\newcommand{\tensor}{\otimes}
\newcommand{\dsum}{\oplus}
\newcommand{\x}{\times}
\newcommand{\isom}{\cong}
\newcommand{\sm}{\backslash}
\newcommand{\Up}{\uparrow\!\!}
\newcommand{\sg}{\sigma}           
\newcommand{\setsg}{\sigma}        
\newcommand{\lsg}{\bar{\sigma}}    
\newcommand{\setlsg}{\bar{\sigma}} 
\newcommand{\family}{\mathcal F}   
\begin{document}
\date{August 14, 2008}
\title{Simplicial Matrix-Tree Theorems}

\author{Art M.\ Duval}
\address{Department of Mathematical Sciences,
University of Texas at El Paso,
El Paso, TX 79968-0514}
\author{Caroline J.\ Klivans}
\address{Departments of Mathematics and Computer Science,
The University of Chicago,
Chicago, IL 60637}
\author{Jeremy L.\ Martin}
\address{Department of Mathematics,
University of Kansas,
Lawrence, KS 66047}

\thanks{Second author partially supported by NSF VIGRE grant DMS-0502215.
Third author partially supported by an NSA Young Investigators Grant.}

\keywords{Simplicial complex, spanning tree, tree enumeration, Laplacian, spectra, eigenvalues, shifted complex}
\subjclass[2000]{Primary 05A15; Secondary 05E99, 05C05, 05C50, 15A18, 57M15}

\begin{abstract}
We generalize the definition and enumeration of spanning trees
from the setting of graphs to that of arbitrary-dimensional simplicial
complexes $\Delta$, extending an idea due to G.~Kalai.
We prove a simplicial version of the Matrix-Tree Theorem that
counts simplicial spanning trees,
weighted by the squares of the orders of their top-dimensional integral
homology groups, in terms of the Laplacian matrix of~$\Delta$.
As in the graphic case, one can obtain a more finely weighted
generating function for simplicial spanning trees by assigning
an indeterminate to each vertex of $\Delta$
and replacing the entries of the Laplacian with Laurent monomials.
When $\Delta$ is a shifted complex, we give a combinatorial interpretation
of the eigenvalues of its weighted Laplacian and prove that they determine
its set of faces uniquely, generalizing known results about
threshold graphs and unweighted Laplacian eigenvalues of shifted complexes.
\end{abstract}

\maketitle

\section{Introduction} \label{intro-section}
\setcounter{intro-section-counter}{\value{section}}

This article is about generalizing the Matrix-Tree Theorem
from graphs to simplicial complexes.

\subsection{The classical Matrix-Tree Theorem}
We begin by reviewing the classical case; for a more detailed treatment,
see, e.g., \cite{Bollobas}.
Let $G$ be a finite, simple, undirected graph with vertices
$V(G)=[n]=\{1,2,\dots,n\}$ and edges $E(G)$.
A \emph{spanning subgraph} of $G$ is a graph $T$ with $V(T)=V(G)$
and $E(T)\subseteq E(G)$; thus a spanning subgraph may be specified
by its edge set.
A spanning subgraph~$T$ is a \emph{spanning tree} if
(a) $T$ is acyclic; (b) $T$ is connected; and (c) $|E(T)|=|V(T)|-1$.
It is a fundamental property of spanning trees (the ``two-out-of-three theorem'')
that any two of these three conditions together imply the third.

The \emph{Laplacian} of $G$ is the $n\x n$ symmetric matrix $L=L(G)$ with entries
  $$L_{ij} = \begin{cases}
    \deg_G(i) & \text{ if } i=j,\\
    -1 & \text{ if $i,j$ are adjacent,}\\
    0 & \text{ otherwise,}
    \end{cases}$$
where $\deg_G(i)$ is the degree of vertex~$i$ (the number of edges having $i$
as an endpoint).  Equivalently, $L=\bd\cbd$, where $\bd$ is the (signed) vertex-edge
incidence matrix and $\cbd$ is its transpose.  If we regard $G$ as a one-dimensional simplicial
complex, then $\bd$ is just the simplicial boundary map from 1-faces
to 0-faces, and $\cbd$ is the simplicial coboundary map.  The matrix $L$ is
symmetric, hence diagonalizable, so it has $n$ real eigenvalues (counting multiplicities).
The number of nonzero eigenvalues of $L$ is $n-c$, where $c$
is the number of components of $G$.

The Matrix-Tree Theorem, first observed by Kirchhoff \cite{Kirchhoff}
in his work on electrical circuits (modern references include \cite{Bollobas},
\cite{Moon} and \cite[Chapter~5]{EC2}), expresses the number $\tau(G)$ of spanning trees of $G$
in terms of $L$.  The theorem has two equivalent formulations.

\begin{theorem}[{\bf Classical Matrix-Tree Theorem}]
\label{CMTT}
Let $G$ be a connected graph with $n$ vertices, and let $L$ be its Laplacian matrix.
\begin{enumerate}
\item If the eigenvalues of $L$ are $\lambda_0=0,\lambda_1,\dots,\lambda_{n-1}$, then
  $$\tau(G) = \frac{\lambda_1\cdots\lambda_{n-1}}{n}.$$
\item For $1\leq i\leq n$, let $L_i$ be the \emph{reduced Laplacian} obtained from
$L$ by deleting the $i^{th}$ row and $i^{th}$ column.  Then
  $$\tau(G) = \det L_i.$$
\end{enumerate}
\end{theorem}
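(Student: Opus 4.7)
The plan is to establish part~(2) first via the Cauchy-Binet formula, then deduce part~(1) as a consequence. Since $L$ is symmetric it is diagonalizable with real eigenvalues, and since the row sums of $L$ vanish, the all-ones vector lies in the kernel, so one of the eigenvalues is indeed $0$.

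For part~(2), I would fix an arbitrary orientation on each edge of $G$ so that the signed vertex-edge incidence matrix $\bd$ becomes a well-defined $n\times m$ matrix, where $m=|E(G)|$. A direct check gives $L=\bd\cbd$. Let $\bd_i$ denote the $(n-1)\times m$ matrix obtained from $\bd$ by deleting the row corresponding to vertex~$i$; then $L_i=\bd_i\cbd_i$. Applying Cauchy-Binet yields
\[
\det L_i \;=\; \sum_{S}\bigl(\det \bd_i[S]\bigr)^{2},
\]
where the sum is over all $(n{-}1)$-element subsets $S\subseteq E(G)$ and $\bd_i[S]$ is the submatrix with columns indexed by $S$.

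The central lemma is that $\det \bd_i[S]=\pm 1$ when $S$ is the edge set of a spanning tree, and $\det \bd_i[S]=0$ otherwise. If $S$ contains a cycle, then summing the columns around the cycle with appropriate signs (dictated by the chosen edge orientations) produces a linear dependence, forcing the determinant to vanish. If instead $S$ is acyclic with $n-1$ edges, the two-out-of-three property forces $S$ to be a spanning tree, and I would argue by induction on $n$ by locating a leaf $v\neq i$, using that its corresponding column has a single nonzero entry of $\pm 1$, and cofactor-expanding along that column to reduce to a smaller tree. I expect the main obstacle to be bookkeeping signs in the induction, but because the conclusion only needs the magnitude, the argument is robust. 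Combining the two cases with Cauchy-Binet yields $\det L_i=\tau(G)$.

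For part~(1), I would exploit that the adjugate $\mathrm{adj}(L)$ satisfies $L\cdot\mathrm{adj}(L)=(\det L)I=0$ and $\mathrm{adj}(L)\cdot L=0$; since $\ker L$ is spanned by the all-ones vector (as $G$ is connected), the adjugate is a scalar multiple of the all-ones matrix, so all diagonal cofactors of $L$ are equal, i.e., $\det L_i$ is independent of $i$. The coefficient of $x$ in $\det(xI-L)$ equals $(-1)^{n-1}$ times the sum of principal $(n{-}1)$-minors of $L$, namely $(-1)^{n-1}\cdot n\cdot\det L_i$. On the other hand, factoring through the eigenvalues $0,\lambda_1,\dots,\lambda_{n-1}$ gives the same coefficient as $(-1)^{n-1}\lambda_1\cdots\lambda_{n-1}$. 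Equating and invoking~(2) gives~(1).
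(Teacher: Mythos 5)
Your argument is correct and is the standard classical proof; note, though, that the paper never proves Theorem~\ref{CMTT} itself (it cites Kirchhoff and the textbook references) and instead recovers it as the $d=1$ case of the Simplicial Matrix-Tree Theorem, whose proof runs closely parallel to yours. The correspondence is: your Cauchy-Binet expansion of $\det L_i$ is exactly the paper's equation~\eqref{LT-formula}; your key lemma that $\det \bd_i[S]$ vanishes unless $S$ is a spanning tree is Proposition~\ref{nonsingular-criterion}; and your ``$\det=\pm 1$'' computation is Proposition~\ref{detD-formula}, where in higher dimension the value $\pm1$ gets replaced by a ratio of orders of torsion homology groups (equal to $1$ for graphs). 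For part~(1) the paper also reads off the product of nonzero eigenvalues as a coefficient of the characteristic polynomial, but rather than your adjugate argument (all diagonal cofactors equal, so the sum is $n\det L_i$), it sums $\det L_U$ over \emph{all} admissible row/column deletions, which in the graph case gives $\pi_1=\tau(G)\cdot\tau_0=\tau(G)\cdot n$ since a $0$-dimensional spanning tree is just a vertex; your equal-cofactor observation is a clean substitute in the graph setting but does not generalize as directly, since in higher dimension different reduced Laplacians $L_U$ can have different determinants (differing by the torsion factor $|\HH_{d-2}(\Delta_U)|^2$). One small slip to fix: in the inductive step of your determinant lemma you should cofactor-expand along the \emph{row} of the leaf vertex $v$ (which has a single nonzero entry, lying in the column of its unique incident edge of $S$); the column of that edge may have two nonzero entries if the other endpoint is not $i$.
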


Well-known corollaries of the Matrix-Tree Theorem include Cayley's formula \cite{Cayley}
  \begin{equation} \label{Cayley}
  \tau(K_n)=n^{n-2}
  \end{equation}
where $K_n$ is the complete graph on $n$ vertices, and Fiedler and Sedl\'{a}\c{c}ek's
formula \cite{FS}
  \begin{equation} \label{Kmn}
  \tau(K_{n,m})=n^{m-1}m^{n-1}
  \end{equation}
where $K_{n,m}$ is the complete bipartite graph on vertex sets of sizes $n$ and $m$.

The Matrix-Tree Theorem can be refined by introducing an indeterminate $e_{ij}=e_{ji}$
for each pair of vertices $i,j$, setting $e_{ij}=0$ if $i,j$ do not share a common edge.
The \emph{weighted Laplacian} $\hat L$
is then defined as the $n\x n$ matrix with entries
  $$\hat L_{ij} = \begin{cases}
    \sum_{k=1}^n e_{ik} & \text{ if } i=j,\\
    -e_{ij} & \text{ if $i,j$ are adjacent,}\\
    0 & \text{ otherwise.}
  \end{cases}$$

\begin{theorem}[{\bf Weighted Matrix-Tree Theorem}]
Let $G$ be a graph with $n$ vertices, and let $\hat L$ be its weighted Laplacian matrix.
\begin{enumerate}
\item If the eigenvalues of $\hat L$ are $\hat\lambda_0=0,\hat\lambda_1,\dots,\hat\lambda_{n-1}$, then
  $$\sum_{T\in\SST(G)} \prod_{ij\in T} e_{ij} =
      \frac{\hat\lambda_1\cdots\hat\lambda_{n-1}}{n}$$
where $\SST(G)$ is the set of all spanning trees of $G$.  
\item For $1\leq i\leq n$, let $\hat L_i$ be the \emph{reduced weighted Laplacian} obtained from
$\hat L$ by deleting the $i^{th}$ row and $i^{th}$ column.  Then
    $$\sum_{T\in\SST(G)} \prod_{ij\in T} e_{ij} = \det\hat L_i.$$
\end{enumerate}
\end{theorem}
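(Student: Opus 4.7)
My plan is to mimic the standard proof of the classical Matrix-Tree Theorem, inserting the weights at the level of the boundary-map factorization. Let $\bd$ denote the $n \times |E(G)|$ signed vertex-edge incidence matrix of $G$ (as in the discussion of $L$ above), and let $E$ be the diagonal $|E(G)| \times |E(G)|$ matrix whose $(e,e)$-entry is $e_{ij}$ when $e$ is the edge $ij$. A direct expansion against the definition of $\hat L$ gives the factorization
\[
\hat L \;=\; \bd\, E\, \bd^T,
\]
which specializes to $L = \bd\, \bd^T$ when all weights equal $1$.

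For part (2), I would let $\bd_i$ denote $\bd$ with the $i$th row removed, so that $\hat L_i = \bd_i\, E\, \bd_i^T$, and apply the Cauchy-Binet formula to obtain
\[
\det \hat L_i \;=\; \sum_{B} \bigl(\det \bd_i^B\bigr)^2 \prod_{jk \in B} e_{jk},
\]
where the sum runs over $(n-1)$-element subsets $B \subseteq E(G)$ and $\bd_i^B$ is the $(n-1) \times (n-1)$ submatrix of $\bd_i$ with columns indexed by $B$. Inserting the classical evaluation $\det \bd_i^B = \pm 1$ when $B$ is the edge set of a spanning tree of $G$ and $\det \bd_i^B = 0$ otherwise then yields the desired identity. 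The nonzero case comes from pruning a leaf of the tree and inducting on tree size; the vanishing case follows by exhibiting a signed dependence among columns indexed by a cycle in $B$.

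Part (1) I would deduce from (2) via a standard linear-algebra fact: for a symmetric $n \times n$ matrix $M$ whose kernel is one-dimensional and spanned by $\mathbf{1}$, the product of the nonzero eigenvalues of $M$ equals $n \cdot \det M_i$ for every $i$. The weighted Laplacian $\hat L$ is symmetric with vanishing row sums, so $\mathbf{1} \in \ker \hat L$; working over $\mathbb{Q}(\{e_{ij}\})$, part (2) yields a nonvanishing diagonal cofactor (assuming $G$ is connected, which is implicit in writing $\hat\lambda_0 = 0$ only once), forcing $\dim \ker \hat L = 1$ over this field. Combining these observations converts the minor formula (2) into the eigenvalue formula (1).

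The step most likely to need care is the determinantal evaluation $\det \bd_i^B = \pm 1$ for spanning trees. This is a standard consequence of the total unimodularity of the signed incidence matrix, so once it is in hand the proof is essentially bookkeeping about the weights and the Cauchy-Binet step; no new obstacle relative to the unweighted theorem arises.
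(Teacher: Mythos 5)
Your proposal is correct, and for part (2) it is essentially the argument the paper uses: the paper never proves this classical statement separately, but its proof of part (2) of the Weighted Simplicial Matrix-Tree Theorem (Theorem~\ref{thm:WSMTT}) is exactly your computation specialized to $d=1$ --- factor the weighted Laplacian through the incidence matrix, apply Binet--Cauchy, and evaluate the maximal minors of the restricted incidence matrix as $\pm 1$ for spanning trees and $0$ otherwise (the paper packages the weights by scaling each column of $\bd$ by $x_F$ with $e_{ij}=x_F^2$ rather than inserting a diagonal matrix $E$, and it obtains the $\pm1$ evaluation from a relative-homology long exact sequence rather than leaf-pruning, but these are cosmetic differences). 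Where you genuinely diverge is part (1): you deduce it from part (2) via the fact that a symmetric matrix with kernel spanned by $\mathbf{1}$ has all principal cofactors equal, so the product of the nonzero eigenvalues is $n\det\hat L_i$; the paper instead proves the eigenvalue formula directly, reading off the product of nonzero eigenvalues as a coefficient of the characteristic polynomial, i.e.\ as the sum of \emph{all} principal minors of size equal to $\rank \hat L$, and applying Binet--Cauchy to each --- in the graph case the factor $n$ then appears as the number of choices of deleted vertex ($\tau_0=n$), and in general as $\tau_{d-1}(\Delta)$. Your shortcut is cleaner for graphs but relies on the kernel being spanned by one canonical vector, which is special to dimension one; the paper's route is the one that survives the passage to higher-dimensional complexes, where the relevant kernel has no such description and the ``equal cofactors'' phenomenon is replaced by the weighting of each reduced Laplacian by homology torsion. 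Your handling of connectivity in part (1) (generic weights plus a nonvanishing cofactor forcing a one-dimensional kernel) is fine.
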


By making appropriate substitutions for the indeterminates $e_{ij}$,
it is often possible to obtain finer enumerative information than merely the number of
spanning trees.
For instance, when $G=K_n$, introducing indeterminates $x_1,\dots,x_n$
and setting $e_{ij}=x_ix_j$ for all $i,j$ yields the \emph{Cayley-Pr\"ufer Theorem},
which enumerates spanning trees of $K_n$ by their degree sequences:
  \begin{equation} \label{Cayley-Prufer}
  \sum_{T\in\SST(G)} \ \prod_{i=1}^n x_i^{\deg_T(i)} = x_1\cdots x_n(x_1+\cdots+x_n)^{n-2}.
  \end{equation}
Note that Cayley's formula \eqref{Cayley} can be
recovered from the Cayley-Pr\"ufer Theorem by setting $x_1=\cdots=x_n=1$.

\subsection{Simplicial spanning trees and how to count them}

To extend the scope of the Matrix-Tree Theorem
from graphs to simplicial complexes, we must first say what
``spanning tree'' means in arbitrary dimension.  Kalai \cite{Kalai}
proposed a definition that replaces the
acyclicity, connectedness, and edge-count conditions with
their analogues in simplicial homology.
Our definition adapts Kalai's definition to a more general class
of simplicial complexes.%
  \footnote{There are many other definitions of ``simplicial tree'' in the literature,
  depending on which properties of trees one wishes to extend;
  see, e.g., \cite{BP,Dewdney,Faridi,HLM,MV}.  By adopting Kalai's idea, we choose a
  definition that lends itself well to enumeration.  The closest to ours in spirit
  is perhaps that of Masbaum and Vaintrob \cite{MV}, whose main result is a Matrix-Tree-like
  theorem enumerating a different kind of 2-dimensional tree using Pfaffians rather than Laplacians.}

Let $\Delta$ be a $d$-dimensional simplicial complex, and let $\Upsilon\subset\Delta$
be a subcomplex containing all faces of $\Delta$ of dimension $<d$.  We say that
$\Upsilon$ is a \emph{simplicial spanning tree} of $\Delta$ if the following
three conditions hold:
 \begin{subequations}
  \begin{align}
  & \HH_d(\Upsilon,\Zz) = 0,\label{ST-acyclic}\\
  & |\HH_{d-1}(\Upsilon,\Zz)| < \infty, \text{ and }\label{ST-conn}\\
  & f_d(\Upsilon) = f_d(\Delta)-\betti_d(\Delta)+\betti_{d-1}(\Delta),\label{ST-count}
  \end{align}
  \end{subequations}
where $\HH_i$ denotes reduced simplicial homology (for which see, e.g.,
\cite[\S2.1]{Hatcher}).  (The conditions \eqref{ST-acyclic} and \eqref{ST-conn}
were introduced by Kalai in \cite{Kalai}, while \eqref{ST-count} is more general,
as we will explain shortly.)  When $d=1$, the conditions
\eqref{ST-acyclic}\dots\eqref{ST-count} say respectively that $\Upsilon$ is acyclic, connected,
and has one fewer edge than it has vertices, recovering the definition of the
spanning tree of a graph.  Moreover, as we will show in
Proposition \ref{two-out-of-three}, any two of the three conditions
together imply the third.

A graph $G$ has a spanning tree if and only if $G$ is connected.
The corresponding condition for a simplicial complex $\Delta$
of dimension~$d$ is that $\HH_i(\Delta,\Qq)=0$ for all $i<d$;
that is, $\Delta$ has the rational homology type of a wedge of $d$-dimensional
spheres.  We will call such a complex \emph{acyclic in positive codimension}, or APC for short.
This condition, which we will assume throughout the rest of the introduction, is much
weaker than Cohen-Macaulayness (by Reisner's theorem \cite{Reisner}),
and therefore encompasses many complexes of combinatorial interest,
including all connected graphs, simplicial spheres, shifted, matroid, and
Ferrers complexes, and some chessboard and matching complexes.

For $k\leq d$, let $\bd=\bd_k$ be the $k$th simplicial boundary
matrix of~$\Delta$ (with rows and columns indexed respectively by $(k-1)$-dimensional
and $k$-dimensional faces of $\Delta$), and let $\cbd$ be its transpose.
The \emph{($k$th up-down) Laplacian} of $\Delta$ is $L=\bd\cbd$; this can be regarded either as a square matrix of size $f_{k-1}(\Delta)$ or as a linear endomorphism on ($k-1$)-chains of $\Delta$.  Define invariants
  \begin{align*}
  \pi_k = \pi_k(\Delta) &= \text{ product of all nonzero eigenvalues of $L$},\\
  \tau_k = \tau_k(\Delta) &= \sum_{\Upsilon\in\SST_k(\Delta)} |\HH_{k-1}(\Upsilon)|^2,
  \end{align*}
where $\SST_k(\Delta)$ denotes the set of all $k$-trees of $\Delta$ (that is,
simplicial spanning trees of the $k$-skeleton of $\Delta$).

Kalai \cite{Kalai} studied these invariants in the case that $\Delta$ is
a simplex on $n$ vertices, and proved the formula
  \begin{equation} \label{Kalai-Cayley}
  \tau_k(\Delta) = n^{\binom{n-2}{k}}
  \end{equation}
(of which Cayley's formula \eqref{Cayley} is the special case $k=1$).  Kalai also
proved a natural weighted analogue of \eqref{Kalai-Cayley} enumerating simplicial
spanning trees by their degree sequences, thus generalizing the Cayley-Pr\"ufer
Theorem~\eqref{Cayley-Prufer}.

Given disjoint vertex sets $V_1,\dots,V_r$ (``color classes''),
the faces of the corresponding \emph{complete colorful complex} $\Gamma$ are those
sets of vertices with no more than one vertex of each color.  Equivalently,
$\Gamma$ is the simplicial join $V_1*V_2*\cdots*V_r$ of the 0-dimensional complexes $V_i$.
Adin \cite{Adin} extended Kalai's work by proving
a combinatorial formula for $\tau_k(\Gamma)$, which we shall not reproduce here,
for every $1\leq k<r$.  Note that when $r=2$, the complex $\Gamma$ is a complete bipartite
graph, and if $|V_i|=1$ for all $i$, then $\Gamma$ is a simplex.  Thus both
\eqref{Kmn} and \eqref{Kalai-Cayley} can be recovered from Adin's formula.

Kalai's and Adin's
beautiful formulas inspired us to look for more results about simplicial spanning
tree enumeration, and in particular to formulate a simplicial version of the Matrix-Tree
Theorem that could be applied to as broad a class of complexes as possible.
Our first main result generalizes the Matrix-Tree Theorem to all APC simplicial complexes.

\setcounter{SMTT-counter}{\value{theorem}}
\begin{theorem}[{\bf Simplicial Matrix-Tree Theorem}]
\label{thm:SMTT}
Let $\Delta$ be a $d$-dimensional APC simplicial complex.  Then:
\begin{enumerate}
\item\label{SMTT-eigenvalues} We have
  $$\pi_d(\Delta) = \frac{\tau_d(\Delta) \tau_{d-1}(\Delta)}{|\HH_{d-2}(\Delta)|^2}.$$
\item\label{SMTT-reducedLapl} Let $U$ be the set of facets of a $(d-1)$-SST of $\Delta$, and let
$L_U$ be the reduced Laplacian obtained by deleting the rows and columns of $L$
corresponding to $U$.  Then
  $$\tau_d(\Delta) = \frac{|\HH_{d-2}(\Delta)|^2}{|\HH_{d-2}(\Delta_U)|^2} \det L_U.$$
\end{enumerate}
\end{theorem}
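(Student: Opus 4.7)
The plan is to establish part~(\ref{SMTT-reducedLapl}) first by a Cauchy-Binet expansion of $\det L_U$, and then to derive part~(\ref{SMTT-eigenvalues}) by summing this identity over all $(d-1)$-SSTs.

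Setting $\bd=\bd_d$ and $L=\bd\cbd$, I would write $L_U=\bd_{\bar U}\bd_{\bar U}^*$, where $\bd_{\bar U}$ is the submatrix of $\bd$ consisting of the rows indexed by the $(d-1)$-faces \emph{not} in $U$.  Cauchy-Binet then gives
$$\det L_U \;=\; \sum_B (\det \bd_{\bar U,B})^2,$$
the sum running over $|\bar U|$-subsets $B$ of the $d$-faces and $\bd_{\bar U,B}$ denoting the corresponding square submatrix.  A short count using the APC hypothesis shows $|\bar U|=\rank\bd_d = f_d(\Delta)-\betti_d(\Delta)+\betti_{d-1}(\Delta)$, which is exactly the facet-count of a $d$-SST.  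A dimension count with the long exact sequence of the pair $(\Delta,\Delta_U)$ then implies that $\det\bd_{\bar U,B}\neq 0$ iff the subcomplex $\Upsilon_B\subset\Delta$ with facets $B$ (and all faces of $\Delta$ of dimension $<d$) satisfies condition~(\ref{ST-acyclic}), which by the two-out-of-three principle (Proposition~\ref{two-out-of-three}) is equivalent to $\Upsilon_B$ being a $d$-SST.

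The heart of the argument is evaluating $|\det\bd_{\bar U,B}|$ when $\Upsilon_B$ is a $d$-SST.  I would compute this by Smith normal form, tracking elementary divisors through the short exact sequence of integral chain complexes
$$0 \longrightarrow C_*(\Delta_U) \longrightarrow C_*(\Delta) \longrightarrow C_*(\Delta,\Delta_U) \longrightarrow 0,$$
where $\Delta_U$ denotes the complex consisting of the $(d-2)$-skeleton of $\Delta$ together with the $(d-1)$-faces $U$.  The identity to prove is
$$|\det\bd_{\bar U,B}| \;=\; \frac{|\HH_{d-1}(\Upsilon_B)|\cdot|\HH_{d-2}(\Delta_U)|}{|\HH_{d-2}(\Delta)|},$$
whose square, summed over $d$-SSTs $\Upsilon_B$, will yield part~(\ref{SMTT-reducedLapl}).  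This Smith-normal-form bookkeeping is the step I expect to be the main obstacle, since it requires choosing bases that diagonalise $\bd_{d-1}$ and $\bd_d$ compatibly across the above sequence and carefully invoking acyclicity in positive codimension to pin down the correct torsion orders.

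For part~(\ref{SMTT-eigenvalues}) I would use the algebraic fact that for a symmetric positive-semidefinite matrix $M$ of rank $r$, the product of nonzero eigenvalues equals the $r$-th elementary symmetric function of all eigenvalues, which in turn equals $\sum_{|S|=r}\det M_S$, the sum of all principal $r\times r$ minors.  Applied to $L$, this expresses $\pi_d$ as $\sum_U\det L_U$, summed over all $U$ of size $f_{d-1}(\Delta)-r$.  The rank argument above shows $\det L_U=0$ unless $U$ is the facet set of a $(d-1)$-SST $\Upsilon$, in which case $\Delta_U=\Upsilon$ and $|\HH_{d-2}(\Delta_U)|^2$ is precisely the summand appearing in $\tau_{d-1}(\Delta)$.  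Substituting part~(\ref{SMTT-reducedLapl}) and summing gives
$$\pi_d \;=\; \frac{\tau_d(\Delta)}{|\HH_{d-2}(\Delta)|^2}\sum_{\Upsilon\in\SST_{d-1}(\Delta)}|\HH_{d-2}(\Upsilon)|^2 \;=\; \frac{\tau_d(\Delta)\,\tau_{d-1}(\Delta)}{|\HH_{d-2}(\Delta)|^2},$$
as required.
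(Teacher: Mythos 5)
Your overall architecture is the same as the paper's: a Cauchy--Binet expansion of $\det L_U$, identification of the nonvanishing maximal minors of $\bd_d$ with pairs consisting of a $d$-SST and a $(d-1)$-SST, a formula expressing $|\det\bd_{\bar U,B}|$ in terms of orders of homology groups, and part~(1) obtained from the principal-minor expansion of $\pi_d$ (the paper proves (1) directly and then (2); the content is identical up to reordering). Your cardinality count $|\bar U|=f_d-\betti_d$ and the two-out-of-three reduction are likewise the paper's Lemma~\ref{right-size} and Proposition~\ref{two-out-of-three}.

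The genuine gap is exactly the step you flag as the main obstacle: the identity $|\det\bd_{\bar U,B}|=|\HH_{d-1}(\Upsilon_B)|\,|\HH_{d-2}(\Delta_U)|/|\HH_{d-2}(\Delta)|$ is asserted but not proved, and the route you sketch is both misdirected and harder than necessary. First, the relevant relative complex is the pair $(\Upsilon_B,\Delta_U)$, not $(\Delta,\Delta_U)$: the top boundary map of $C_*(\Delta,\Delta_U)$ is the full rectangular matrix $\bd_{\bar U}$, so its homology never sees the particular square minor $\bd_{\bar U,B}$; the same correction applies to your nonsingularity criterion. Second, no Smith-normal-form bookkeeping with ``compatible bases'' is needed, and indeed one cannot in general put $\bd_d$ and $\bd_{d-1}$ simultaneously into Smith form with compatible integral bases, so as written that step would stall. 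The clean argument (the paper's Proposition~\ref{detD-formula}) is: the relative complex of $(\Upsilon_B,\Delta_U)$ has no cells below dimension $d-1$, so $\HH_{d-1}(\Upsilon_B,\Delta_U)$ is the cokernel of the square matrix $\bd_{\bar U,B}$ and hence has order $|\det\bd_{\bar U,B}|$ when that matrix is nonsingular; the long exact sequence of the pair then yields
$$0\to\HH_{d-1}(\Upsilon_B)\to\HH_{d-1}(\Upsilon_B,\Delta_U)\to\HH_{d-2}(\Delta_U)\to\HH_{d-2}(\Upsilon_B)\to 0,$$
all four groups finite, so the alternating product of their orders is $1$, and $\HH_{d-2}(\Upsilon_B)\isom\HH_{d-2}(\Delta)$ because $\Upsilon_B$ contains the entire $(d-1)$-skeleton. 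Once this lemma is in place (together with its converse direction --- nonsingular minors force $\Delta_U$ to be a $(d-1)$-SST --- which your part~(1) quietly uses when discarding the principal minors indexed by non-SSTs), your derivations of (2) and then (1) go through and coincide with the paper's proof.
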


We will prove these formulas in Section~\ref{SMTT-section}.

In the special case $d=1$, the number $\tau_1(\Delta)$ is just the number of spanning
trees of the graph~$\Delta$, recovering the classical Matrix-Tree Theorem.  When
$d\geq 2$, there can exist spanning trees with finite but nontrivial homology groups
(the simplest example is the real projective plane).  In this case, $\tau_k(\Delta)$ is
greater than the number of spanning trees, because these ``torsion trees'' contribute
more than 1 to the count.  This phenomenon was first observed by
Bolker \cite{Bolker}, and arises also in the study of cyclotomic matroids
\cite{MR1} and cyclotomic polytopes \cite{BH}.

The Weighted Matrix-Tree Theorem also has a simplicial analogue.
Introduce an indeterminate
$x_F$ for each facet (maximal face) $F\in\Delta$, and for every set $T$ of facets
define monomials $x_T = \prod_{F\in T} x_F$ and $X_T=x_T^2$.
Construct the \emph{weighted boundary matrix} $\wbd$ by
multiplying each column of $\bd$ by $x_F$, where $F$ is the facet of $\Delta$
corresponding to that column.  Let $\hat\pi_k$ be the product of the nonzero
eigenvalues of $\Lwud_{\Delta,k-1}$, and let
  $$\hat \tau_k = \hat \tau_k(\Delta) = \sum_{\Upsilon\in\SST_k(\Delta)} |\HH_{k-1}(\Upsilon)|^2 X_\Upsilon.$$

\setcounter{WSMTT-counter}{\value{theorem}}
\begin{theorem}[{\bf Weighted Simplicial Matrix-Tree Theorem}]
\label{thm:WSMTT}
Let $\Delta$ be a $d$-dimensional APC simplicial complex.  Then:
\begin{enumerate}
\item We have\footnote{%
    Despite appearances, there are no missing hats on the right-hand side of this formula!
    Only $\tau_d(\Delta)$ has been replaced with its weighted analogue; $\tau_{d-1}(\Delta)$
    is still just an integer.}
  $$\hat\pi_d(\Delta) = \frac{\hat \tau_d(\Delta) \tau_{d-1}(\Delta)}{|\HH_{d-2}(\Delta)|^2}.$$
\item Let $U$ be the set of facets of a $(d-1)$-SST of $\Delta$, and let $\hat L_U$ be the reduced Laplacian obtained by
deleting the rows and columns of $\hat L$ corresponding to $U$.  Then
  $$\hat \tau_d(\Delta) = \frac{|\HH_{d-2}(\Delta)|^2}{|\HH_{d-2}(\Delta_U)|^2} \det \hat L_U.$$
\end{enumerate}
\end{theorem}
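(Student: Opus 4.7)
The plan is to parallel the proof of Theorem~\ref{thm:SMTT}, tracking the Laurent-monomial weighting through the same Cauchy--Binet and principal-minors computations. The key observation is that since $\wbd$ is obtained from $\bd=\bd_d$ by multiplying the column indexed by each facet $F$ by $x_F$, for every row subset $R$ and every column subset $T$ of the same cardinality,
$$\det\bigl(\wbd_{R,T}\bigr)=x_T\det\bigl(\bd_{R,T}\bigr),\qquad\det\bigl(\wbd_{R,T}\bigr)^2=X_T\det\bigl(\bd_{R,T}\bigr)^2.$$
The factor $X_T$ is precisely what converts $\tau_d(\Delta)$ into $\hat\tau_d(\Delta)$ when summed against the characteristic function of $d$-SSTs.

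For part~(2), I would set $R$ to be the complement of $U$ in the $(d-1)$-faces of $\Delta$, so that $\hat L_U=\wbd_{R,\,\cdot}\,(\wbd_{R,\,\cdot})^T$. Cauchy--Binet gives
$$\det\hat L_U=\sum_T\det\bigl(\wbd_{R,T}\bigr)^2=\sum_T X_T\det\bigl(\bd_{R,T}\bigr)^2,$$
with $T$ ranging over $|R|$-element sets of $d$-faces. The case analysis already carried out in the proof of Theorem~\ref{thm:SMTT}(\ref{SMTT-reducedLapl})---which depends only on the linear algebra of $\bd_d$ and the chosen $(d-1)$-SST $U$, independent of any column weights---identifies the nonzero terms with those $T$ that form the facet set of some $d$-SST $\Upsilon\in\SST_d(\Delta)$, and evaluates each nonzero $\det(\bd_{R,T})^2$ in terms of $|\HH_{d-1}(\Upsilon)|^2$ and the ratio $|\HH_{d-2}(\Delta_U)|^2/|\HH_{d-2}(\Delta)|^2$. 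Reassembling yields
$$\det\hat L_U=\frac{|\HH_{d-2}(\Delta_U)|^2}{|\HH_{d-2}(\Delta)|^2}\,\hat\tau_d(\Delta),$$
which rearranges to the stated identity.

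For part~(1), I would use the standard fact that for a positive semidefinite symmetric matrix of rank $r$, the product of its nonzero eigenvalues equals the sum of all principal $r\x r$ minors. Applied to $\hat L$, this forces $|U|$ to equal $f_{d-1}(\Delta)-\rank\wbd=f_{d-1}(\Delta)-\rank\bd_d$, which (using APC) is the facet count of a $(d-1)$-SST. The stronger form of the Cauchy--Binet computation above shows $\det\hat L_U=0$ whenever $U$ is not itself such a facet set, so only $U\in\SST_{d-1}(\Delta)$ contribute, giving
$$\hat\pi_d=\sum_{U\in\SST_{d-1}(\Delta)}\det\hat L_U=\frac{\hat\tau_d(\Delta)}{|\HH_{d-2}(\Delta)|^2}\sum_{U\in\SST_{d-1}(\Delta)}|\HH_{d-2}(\Delta_U)|^2=\frac{\hat\tau_d(\Delta)\,\tau_{d-1}(\Delta)}{|\HH_{d-2}(\Delta)|^2}.$$

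The only substantive work lies in the case analysis converting the Cauchy--Binet expansion into the enumeration over simplicial spanning trees, namely characterizing when $\bd_{R,T}$ is nonsingular in terms of $(U,T)$ being compatible SST facet sets, and evaluating its squared determinant via homology orders. That analysis is identical to the one for Theorem~\ref{thm:SMTT}, because $\wbd$ differs from $\bd$ only by the invertible right-multiplication $\mathrm{diag}(x_F)$ over $\Zz[x_F^{\pm 1}]$, which cannot alter which column subsets are linearly independent; it merely rescales each maximal minor by $x_T$. The passage from~(2) to~(1) and the bookkeeping with the $X_T$ weights are then automatic.
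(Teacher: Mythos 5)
Your proposal is correct and follows essentially the same route as the paper: the identity $\det\wbd_{S,T}=x_T\det\bd_{S,T}$ together with the unweighted nonsingularity criterion and determinant formula (the paper's Proposition~\ref{weighted-tools}) feeds into the same Binet--Cauchy expansions. The only difference is organizational---you obtain part~(1) by summing the part~(2) identity over all $(d-1)$-SSTs $U$, while the paper expands $\hat\pi_d$ directly as a double sum over pairs $(S,T)$ and interchanges the order of summation---which is the same computation.
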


We will prove these formulas in Section~\ref{WSMTT-section}.

Setting $x_F=1$ for all $F$ in Theorem~\ref{thm:WSMTT} recovers Theorem~\ref{thm:SMTT}.  In fact, more is
true; setting $x_F=1$ in the multiset of eigenvalues of the weighted Laplacian (reduced or
unreduced) yields the eigenvalues of the corresponding unweighted Laplacian.  If the complex
$\Delta$ is \emph{Laplacian integral}, that is, its Laplacian matrix has integer eigenvalues,
then we can hope to find a combinatorial interpretation of the factorization of
$\hat \tau_d(\Delta)$ furnished by Theorem~\ref{thm:WSMTT}.  An important class of
Laplacian integral simplicial complexes are the \emph{shifted complexes}.

\subsection{Results on shifted complexes}

Let $p\leq q$ be integers, and let $[p,q]=\{i\in\Zz\st p\leq i\leq
q\}$.  A simplicial complex $\Sigma$ on vertex set $[p,q]$ is
\emph{shifted} if the following condition holds: whenever $i<j$ are
vertices and $F\in\Sigma$ is a face such that $i\not\in F$ and $j\in
F$, then $F\sm\{j\}\cup\{i\}\in\Sigma$.  Equivalently, define the
\emph{componentwise partial order} $\cpleq$ on finite sets of positive
integers as follows: $A\cpleq B$ whenever $A=\{a_1<\dots<a_m\}$,
$B=\{b_1<\dots<b_m\}$, and $a_i\leq b_i$ for all $i$.  Then a complex
is shifted precisely when it is an order ideal with respect to the
componentwise partial order.  (See \cite[chapter~3]{EC1} for general
background on partially ordered sets.)  

Shifted complexes were used by Bj\"orner and Kalai~\cite{BK} to characterize 
the $f$-vectors and Betti numbers of all simplicial complexes.
Shifted complexes are also one of a small handful of classes of simplicial complexes
whose Laplacian eigenvalues are known to be integral.  In particular, Duval and Reiner \cite[Thm.~1.1]{DR} 
proved that the Laplacian eigenvalues of a shifted complex
$\Sigma$ on $[p,q]$ are given by the conjugate of the partition $(d_p,d_{p+1},\dots,d_q)$, where
$d_i$ is the degree of vertex~$i$, that is, the number of facets containing it.  

In the second part of the article,
Sections~\ref{shifted-section}--\ref{corollary-section}, we study 
factorizations of the weighted spanning tree enumerator
of $\Sigma$ under the \emph{combinatorial fine weighting}
  $$x_F = \prod_{i=1}^{k+1} x_{i,v_i}$$
(described in more detail in Section~\ref{shifted-section}),
where $F=\{v_1<\cdots<v_{k+1}\}$ is a $k$-dimensional face of $\Sigma$.  
Thus the term of
$\tau_k(\Sigma)$ corresponding to a particular simplicial spanning tree of $\Sigma$ contains more precise
information than its vertex degrees alone (which can be recovered by further setting
$x_{i,j}=x_j$ for all $i,j$).

For integer sets $A$ and $B$ as above, we call the
ordered pair $(A,B)$ a \emph{critical pair} of
$\Sigma$ if $A\in\Sigma$, $B\not\in\Sigma$, and $B$ covers $A$ in the componentwise order.  That is, $B=\{a_1,\dots,a_{i-1},a_i+1,a_{i+1},\dots,a_m\}$ for some $i\in[m]$.
The \emph{long signature} of $(A,B)$ is the ordered pair $\lsg(A,B)=(S,T)$, where
$S=\{a_1,\dots,a_{i-1}\}$ and $T=[p,a_i]$.
The corresponding \emph{$z$-polynomial} is defined as
  $$
  z(S,T) = \frac{1}{\Up X_S} \sum_{j\in T} X_{S\cup j}
  $$
where $X_S = x_S^2$ for each $S$, and 
the operator $\Up$\; is defined by $\Up(x_{i,j})=x_{i+1,j}$.
(See Section~\ref{z-poly-subsection} for more details, and Example~\ref{ex:bipyramid} for an example.)
The set of critical pairs is especially significant for a shifted
family (and by extension, for a shifted complex).  
Since a shifted family is just an order ideal with respect to the componentwise partial order~$\cpleq$, the critical pairs identify the frontier
between members and non-members of~$\family$ in the
Hasse diagram of~$\cpleq$.  (See Example~\ref{ex:bipyramid} or \cite{Klivans}
for more details.)
 
Thanks to Theorem~\ref{thm:WSMTT}, the enumeration of SST's of a shifted
complex reduces to computing the determinant of the reduced combinatorial 
finely-weighted Laplacian.  We show in Section~\ref{shifted-section} how this computation
reduces to the computation of 
the eigenvalues of the \emph{algebraic finely weighted
Laplacian}.  This modification of the combinatorial fine weighting, designed to
endow the chain groups of $\Sigma$ with the structure of an algebraic chain complex,
is described in detail in Section~\ref{alg-fine-weighting-section}.  Its
eigenvalues turn out to be precisely the $z$-polynomials associated with critical pairs.

\setcounter{zpoly-counter}{\value{theorem}}
\begin{theorem}
\label{zpoly-theorem}
Let $\Sigma$ be a $d$-dimensional shifted complex,
and let $0\leq i\leq d$.  Then the eigenvalues of the algebraic finely
weighted up-down Laplacian $\LLud_{\Delta,i}$ 
are precisely
$\Up^{d-i}(z(S,T))$, where
$(S,T)$ ranges over all long signatures of critical pairs of $i$-dimensional
faces of $\Sigma$.
\end{theorem}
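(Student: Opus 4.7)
My plan is to prove Theorem~\ref{zpoly-theorem} by induction on the number of vertices of $\Sigma$, exploiting the near-cone structure of shifted complexes. Every shifted complex $\Sigma$ on vertex set $[p,q]$ decomposes as $\Sigma = (\{p\}\ast\link_\Sigma(p))\cup\del_\Sigma(p)$, where both $\link_\Sigma(p)$ and $\del_\Sigma(p)$ are again shifted, this time on $[p+1,q]$. Ordering an $i$-face basis of $\Sigma$ so that faces containing $p$ come first puts the algebraic finely weighted boundary $\Abd_{i+1}$ (and hence $\LLud_{\Sigma,i}=\Abd_{i+1}\Acbd_{i+1}$) into block form, with the two diagonal blocks corresponding to the two pieces of the decomposition.

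The heart of the proof should be a change-of-basis argument that promotes this block form to one whose eigenvalue list is the disjoint union of eigenvalue lists of two induced Laplacians --- one for $\link_\Sigma(p)$, after coning (i.e., after an application of $\Up$), and one for $\del_\Sigma(p)$ --- together with extra eigenvalues arising from the interaction of the blocks. This reduction is where the algebraic (as opposed to merely combinatorial) fine weighting, defined in Section~\ref{alg-fine-weighting-section}, earns its keep: the weighting is chosen precisely so that the relevant chain complex satisfies $\Abd\circ\Abd=0$, and I expect this identity to force the off-diagonal block either to vanish outright or to be removed by a Schur-complement argument with no effect on the spectrum. Matching the resulting spectrum against the set $\{\Up^{d-i}(z(S,T))\}$ is then a combinatorial exercise: critical pairs of $\Sigma$ split into those with $p\in A\cap B$ (descending to critical pairs of $\link_\Sigma(p)$ after one application of $\Up$), those with $p\notin A\cup B$ (descending verbatim to critical pairs of $\del_\Sigma(p)$), and ``frontier'' critical pairs where $p$ itself is the freshly incremented vertex, which account for the extra eigenvalues. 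The overall shift $\Up^{d-i}$ then comes from accumulating one $\Up$ each time the induction cones over a new apex vertex en route to dimension~$d$.

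The main obstacle will be verifying the spectrum-preserving block reduction; everything else is relatively formal bookkeeping once the correct recursive structure is identified. As a sanity check, one could verify the base cases of a single vertex or edge, together with a small example such as the boundary of a triangle, where all eigenvalues can be computed by hand. An alternative route, in case the block reduction proves resistant, would be to construct an explicit eigenvector for each critical pair: for $(A,B)$ with $A=\{a_1<\cdots<a_{i+1}\}$, $j$ the changed index, $S=\{a_1,\dots,a_{j-1}\}$, $T=[p,a_j]$, and $U=\{a_{j+1},\dots,a_{i+1}\}$, one can conjecture an eigenvector of the form $\sum_{k\in T} c_k [S\cup\{k\}\cup U]$ in the $i$-chain space and solve for the coefficients $c_k$ so that the cross-terms in the Laplacian action cancel and leave behind the scalar $\Up^{d-i}(z(S,T))$. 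Verifying this for a representative critical pair would both confirm the recursive picture and nail down the correct normalizations in the algebraic fine weighting.
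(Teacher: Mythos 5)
Your overall strategy---recursing on the initial vertex $p$ through the deletion/link (near-cone) structure, and then matching the spectral recursion against a parallel recursion for critical pairs---is the same skeleton the paper uses (Theorem~\ref{shifted-recurrence} matched against Corollary~\ref{allsg-shifted}, which is all the paper's proof of Theorem~\ref{zpoly-theorem} consists of). The genuine gap is the step you yourself flag as the heart of the argument: the spectral reduction is both unproven and, as you describe it, false. The spectrum of $\LLud_{\Sigma,i}$ is \emph{not} the disjoint union of spectra of induced Laplacians for the coned link and for $\del_p\Sigma$. The off-diagonal block between faces containing $p$ and faces not containing $p$ does not vanish (any $(i+1)$-face $H\ni p$ couples a face of each kind), the identity $\Abd\circ\Abd=0$ does not kill it, and a Schur complement of a symmetric matrix does not preserve eigenvalues. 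What is actually true (Propositions~\ref{cone-spectrum} and~\ref{near-cone-recurrence}) is that each nonzero eigenvalue $\lambda$ contributed by the deletion appears as $X_{1,p}+\lambda$, each nonzero eigenvalue $\mu$ contributed by the link appears as $X_{1,p}+\frac{X_{1,p}}{X_{2,p}}\Up\mu$, and there are $\betti_{i-1}(\del_p\Sigma_{[i]})$ extra copies of $X_{1,p}$. These additive and multiplicative corrections are precisely what turn $z(S,T)$ into $z(S,\tilde T)$ and $z(\tilde S,\tilde T)$ (Proposition~\ref{z-recurrences}); without them your combinatorial matching would produce the $z$-polynomials of the deletion and link, not those attached to critical pairs of $\Sigma$. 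The paper obtains this spectral recurrence by constructing explicit eigenvectors of the cone $p*\Delta$ from eigenvectors of $\Delta$ (separately for $C^{\rm ud}$, $C^{\rm du}$, and the kernel, with a dimension count to show completeness), and then passes to near-cones via the identity $\Sigma_{(d)}=(p*\del_p\Sigma)_{(d)}$ together with the skeleton lemmas (Lemma~\ref{skeleton-Laplacian}, Corollary~\ref{pure-Sud}); nothing playing this role is supplied in your plan.

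Two further ingredients are hidden in what you call formal bookkeeping. First, you must prove that the number of ``frontier'' critical pairs (those in which $p$ itself is incremented) equals the multiplicity $\betti_{i-1}(\del_p\Sigma_{[i]})$ of the extra eigenvalue $X_{1,p}$; the paper does this via the degree difference $\deg(p)-\deg(p+1)$ and the Bj\"orner--Kalai Betti number formula (Proposition~\ref{degree-signature}, Corollary~\ref{degree-beta}), and it is not automatic. Second, since $\del_p\Sigma$ and $\link_p\Sigma$ need not be pure or of the expected dimension, the recursion has to be run on the pure skeleton $\Sigma_{[i]}$ with careful tracking of the raising operator; your ``one $\Up$ per coning'' heuristic points in the right direction but requires Lemmas~\ref{skeleton-Laplacian}--\ref{pure-top-Sud} to be legitimate. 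Also note that critical pairs with $p\notin A$ do not contribute ``verbatim'': their long signature changes ($T$ gains the vertex $p$) because the deletion's initial vertex is $p+1$, which is again the shift phenomenon above. Your fallback of an explicit eigenvector $\sum_{k\in T}c_k[S\cup\{k\}\cup U]$ for each critical pair is not what the paper does and is speculative; verifying that the cross terms cancel for such a small-support ansatz is essentially the computation you are trying to avoid, and the eigenvectors the paper actually produces are built recursively from eigenvectors of smaller complexes rather than from individual critical pairs.
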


In turn, the $z$-polynomials are the factors of the weighted simplicial spanning tree enumerator $\hat \tau_d$.

\setcounter{shifted-counter}{\value{theorem}}
\begin{theorem}
\label{shifted-theorem}
Let $\Sigma$ be a $d$-dimensional shifted complex with initial vertex~$p$.  Then:
\begin{align*}
\hat{\tau}_d(\Sigma) 
    &= \left( \prod_{ F \in \Lambda_{d-1}} X_{\tilde F} \right)
       \left(  \prod_{(S,T) \in \setlsg(\Delta_d)} \frac{z(S,\tilde T)}{X_{1,p}}  \right) \\
    &= \left( \prod_{ F \in \Lambda_{d-1}} X_{\tilde F} \right)
                   \left( \prod_{(S,T) \in \setlsg(\Delta_d)} 
                               \frac{\sum_{j \in \tilde T} X_{S \cup j}}{X_{\tilde S}} \right)
\end{align*}
where $\tilde F = F \cup\{p\}$; $\Delta = \del_p \Sigma = \{F\sm\{p\}\colon F\in\Sigma\}$;
and $\Lambda = \link_p \Sigma = \{F\colon p\not\in F,\;\tilde F\in\Sigma\}$.
\end{theorem}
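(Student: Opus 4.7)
My plan is to combine the eigenvalue formulation of the Weighted Simplicial Matrix-Tree Theorem (Theorem \ref{thm:WSMTT}(1)) with Theorem \ref{zpoly-theorem}. Rearranging Theorem \ref{thm:WSMTT}(1) gives
\[
\hat\tau_d(\Sigma) = \frac{\hat\pi_d(\Sigma) \cdot |\HH_{d-2}(\Sigma)|^2}{\tau_{d-1}(\Sigma)}.
\]
After accounting for the controlled discrepancy between the combinatorial and algebraic fine weightings -- a diagonal conjugation by Laurent monomials, as set up in Section \ref{alg-fine-weighting-section} -- Theorem \ref{zpoly-theorem} with $i = d$ identifies $\hat\pi_d(\Sigma)$ as the product of the polynomials $z(S,T)$ over long signatures of critical pairs of $d$-dimensional faces of $\Sigma$, modulo explicit monomial corrections that I would track precisely.

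The critical pairs of $d$-faces of $\Sigma$ split naturally by whether their lower face $A$ contains the initial vertex $p$: the pairs with $p \notin A$ biject with $\setlsg(\Delta_d)$, the critical pairs of the deletion $\Delta = \del_p \Sigma$, whereas the pairs with $p \in A$ correspond (via $A \mapsto A \setminus \{p\}$) to data indexed by the link $\Lambda = \link_p \Sigma$. The latter group of $z$-polynomials combines with $|\HH_{d-2}(\Sigma)|^2 / \tau_{d-1}(\Sigma)$ and the change-of-weighting monomial to produce the prefactor $\prod_{F \in \Lambda_{d-1}} X_{\tilde F}$, while the former group survives as $\prod_{(S,T) \in \setlsg(\Delta_d)} z(S,\tilde T)/X_{1,p}$. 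The extra $X_{1,p}$ in the denominator reflects the fact that a critical pair $(A,B)$ of $\Sigma$ with $p \notin A$ has long signature $(S,T)$ with $T \ni p$, whereas the ``corresponding'' pair in $\Delta$ has long signature $(S, T \setminus \{p\})$; the bookkeeping of this shift is exactly what is encoded by writing $\tilde T$ rather than $T$.

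The second equality then follows by direct substitution of the defining formula for $z$, using the monomial identity $X_{\tilde S} = X_{1,p} \cdot \Up X_S$, which is immediate from $\tilde S = S \cup \{p\}$ and the action of $\Up$ on monomials: prepending $p$ as the new smallest element shifts each factor $x_{i,v_i}$ up by one in the first index and introduces an extra factor $x_{1,p}$. The main obstacle will be the middle step: controlling $|\HH_{d-2}(\Sigma)|^2 / \tau_{d-1}(\Sigma)$ and matching the $p$-involving $z$-polynomials against the clean combinatorial prefactor $\prod_{F \in \Lambda_{d-1}} X_{\tilde F}$. This likely requires induction on dimension, using that links and deletions of shifted complexes are again shifted, together with careful monomial bookkeeping to ensure that the ``extra'' factors coming from the fine-weighting change of basis cancel with the contributions of the $p$-involving critical pairs.
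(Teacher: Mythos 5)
Your endgame is right (the non-monomial factors do come from the deletion-type critical pairs, and your derivation of the second equality from $X_{\tilde S}=X_{1,p}\,\Up X_S$ is exactly the identity the paper uses), but the central mechanism you propose breaks at the step you defer as ``bookkeeping.'' The discrepancy between the combinatorial and algebraic fine weightings is not a diagonal conjugation: comparing \eqref{Lwud-entries} with \eqref{LLud-entries} gives $\Lwud_{\Sigma,d-1}=D\,\LLud_{\Sigma,d-1}\,D$ with $D=\mathrm{diag}(\Up x_F)_{F\in\Sigma_{d-1}}$, a two-sided scaling (congruence). A congruence preserves rank but not spectra, and since these Laplacians are singular (rank $f_d-\betti_d$ versus size $f_{d-1}$; e.g.\ rank $5$ versus size $9$ for the bipyramid), the product of nonzero eigenvalues of $D M D$ is a sum of principal minors each scaled by a \emph{different} monomial, not a single monomial times the product of nonzero eigenvalues of $M$ (compare $M=\left(\begin{smallmatrix}1&1\\1&1\end{smallmatrix}\right)$, $D=\mathrm{diag}(a,b)$: nonzero eigenvalue $2$ versus $a^2+b^2$). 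So Theorem~\ref{zpoly-theorem}, which computes the spectrum of the \emph{algebraic} Laplacian of $\Sigma$, does not determine the combinatorial $\hat\pi_d(\Sigma)$ ``modulo explicit monomial corrections,'' and this is exactly why the paper uses part~(2) of Theorem~\ref{thm:WSMTT} rather than part~(1): for the \emph{reduced} Laplacian (rows and columns indexed by $\Lambda_{d-1}$, i.e.\ deleting the contractible cone over $p$ in the $(d-1)$-skeleton, so no $\tau_{d-1}$ or homology factors survive), the two-sided scaling only multiplies the determinant by $\prod_{F\in\Lambda_{d-1}}\Up X_F$, and the rescaled matrix is exactly $\LLud_{\Delta,d-1}+X_{1,p}I$ for the deletion $\Delta$ (Lemma~\ref{reduce-to-evals}); Theorem~\ref{zpoly-theorem} is then applied to $\Delta$, not to $\Sigma$.

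Even granting your identification of $\hat\pi_d(\Sigma)$, the proposed cancellation is impossible: the link-type eigenvalues $z(\tilde S,\tilde T)$ of $\Sigma$ are non-monomial Laurent polynomials, hence non-units, so their product divided by the integer $\tau_{d-1}(\Sigma)$ can never collapse to the monomial $\prod_{F\in\Lambda_{d-1}}X_{\tilde F}$. Concretely, for the bipyramid the link-type factors are $z(12,12345)$, $z(13,12345)$, $z(1,123)$ (five, five and three terms) while $\tau_1(B)=75$; indeed the true combinatorial quantity is $\hat\pi_2(B)=\tau_1(B)\,\hat\tau_2(B)=75\cdot\bigl(\prod_{F\in\Lambda_1}X_{\tilde F}\bigr)z(2,123)\,z(23,12345)\,X_{1,1}^{-2}$, which contains an integer factor and only the two deletion-type $z$'s, whereas the algebraic eigenvalue product contains all five $z$'s and no integer factor (they agree only after setting all $x_{i,j}=1$, which is what makes the claim tempting). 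In the correct argument the link contributes no eigenvalue factors at all: every $z$-factor of $\hat\tau_d(\Sigma)$ is an eigenvalue of the deletion shifted by $X_{1,p}$ (via $X_{1,p}+z(S,T)=z(S,\tilde T)$, Proposition~\ref{z-recurrences}), the kernel of $\LLud_{\Delta,d-1}$ supplies the extra powers of $X_{1,p}$ converting $\prod\Up X_F$ into $\prod X_{\tilde F}$ divided by $X_{1,p}^{|\setlsg(\Delta_d)|}$, and $\tau_{d-1}(\Sigma)$ never appears. To salvage a route through part~(1) you would first need a lemma relating the combinatorial $\hat\pi_d(\Sigma)$ to deletion data, at which point you are reconstructing the reduced-Laplacian argument of Lemma~\ref{reduce-to-evals}.
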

Theorems~\ref{zpoly-theorem} and~\ref{shifted-theorem} are proved in Sections~\ref{z-poly-section} and~\ref{count-shifted-section}, respectively.

\begin{example} \label{ex:bipyramid}
As an example to which we will return repeatedly, consider the \emph{equatorial
bipyramid}, the two-dimensional shifted complex $B$ with vertices $[5]$ and facets
123, 124, 125, 134, 135, 234, 235.  A geometric realization
of $B$ is shown in the figure on the left below.  The figure on the right illustrates how
the facets of~$B$ can be regarded as an order ideal.  The boldface lines indicate
critical pairs.

\begin{center}
\resizebox{3in}{1.5in}{\includegraphics{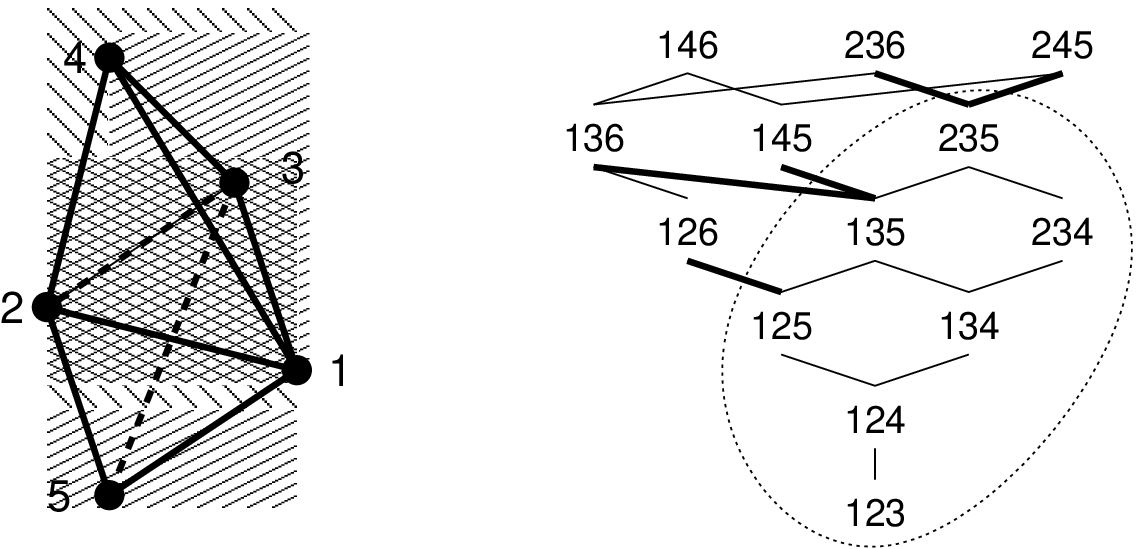}}
\end{center}
The Laplacian eigenvalues corresponding to the critical pairs of $B$ are as follows:
$$
\begin{array}{|c||c|c|c|c|c|} \hline
\text{Critical pair} &  (125,126)  &  (135,136)  &  (135,145) &  (235,236)  &  (235,245)\\ \hline
\text{Eigenvalue}   & z(12,12345) & z(13,12345) & z(1,123)   & z(23,12345) & z(2,123)\\ \hline
\end{array}
$$
To show one of these eigenvalues in more detail, 
$$
z(13, 12345) = \frac{X_{1,1}X_{2,1}X_{3,3} + 
                     X_{1,1}X_{2,2}X_{3,3} +
                     X_{1,1}X_{2,3}X_{3,3} +
                     X_{1,1}X_{2,3}X_{3,4} +
                     X_{1,1}X_{2,3}X_{3,5}}{X_{2,1}X_{3,3}}.
$$
The eigenvalues of this complex are explained in more detail in Section~\ref{sec:example}.  
Its spanning trees are enumerated in Examples~\ref{ex:fine-trees-bipyramid} (fine weighting) 
and~\ref{ex:coarse-trees-bipyramid} (coarse weighting).
\end{example}

We prove Theorem~\ref{zpoly-theorem} by exploiting the recursive structure of shifted
complexes.  As in \cite{DR}, we begin by calculating
the algebraic finely weighted eigenvalues of a near-cone in terms of the eigenvalues
of its link and deletion with respect to its apex (Proposition~\ref{near-cone-recurrence}).
We can then write down a recursive formula (Theorem~\ref{shifted-recurrence}) 
for the nonzero eigenvalues of shifted complexes,
thanks to their characterization as iterated near-cones, simultaneously showing
that these eigenvalues must be of the form $z(S,T)$.  Finally,
we independently establish a recurrence (Corollary~\ref{allsg-shifted}) for the long signatures of
critical pairs of a shifted complex, which coincides with the recurrence for the
$z(S,T)$, thus yielding a bijection between nonzero eigenvalues and critical pairs.  

Corollary~\ref{Swud-shifted} shows what the eigenvalues look like in coarse weighting.  
Passing from weighted to unweighted eigenvalues then easily recovers the Duval-Reiner formula 
for Laplacian eigenvalues of shifted complexes in terms of degree sequences \cite[Thm.~1.1]{DR}.
Similarly, Corollary~\ref{coarse-shifted} gives the enumeration of SST's of a shifted complex in
the coarse weighting.

We are also able
to show that the finely-weighted eigenvalues (though not the 
coarsely-weighted eigenvalues) are enough to recover the 
shifted complex (Corollary~\ref{fine-hearing}), or, in other words, that one can 
``hear the shape'' of a shifted complex.

Several known results can be obtained as consequences of the
general formula of Theorem~\ref{shifted-theorem}.
  \begin{itemize}
 \item The complete $d$-skeleton of a simplex is easily seen to be shifted,
    and applying Theorem~\ref{shifted-theorem} to such complexes recovers Kalai's
    generalization of the Cayley-Pr\"ufer Theorem.
  \item The one-dimensional shifted complexes are precisely the \emph{threshold graphs}, an important
    class of graphs with many equivalent descriptions (see, e.g., \cite{MP}).  When $d=1$,
    Theorem~\ref{shifted-theorem} specializes to the weighted spanning tree enumerator for threshold
    graphs proved by Martin and Reiner \cite[Thm.~4]{MR1} and following from an independent
    result of Remmel and Williamson \cite[Thm.~2.4]{RW}.
  \item Thanks to an idea of Richard Ehrenborg, the formula for threshold graphs can be used to
    recover a theorem of Ehrenborg and van~Willigenburg \cite{EvW}, enumerating spanning trees in
    certain bipartite graphs called \emph{Ferrers graphs} (which are not in general Laplacian integral).
  \end{itemize}
We discuss these corollaries in Section~\ref{corollary-section}.

Some classes of complexes that we think deserve further study include \emph{matroid
complexes}, \emph{matching complexes}, \emph{chessboard complexes} and
\emph{color-shifted} complexes.  The first three kinds of complexes are known
to be Laplacian integral, by theorems of Kook, Reiner and Stanton \cite{KRS},
Dong and Wachs \cite{Dong-Wachs}, and Friedman and Hanlon \cite{Friedman-Hanlon}
respectively.  Every matroid complex is Cohen-Macaulay \cite[\S III.3]{CCA}, hence
APC, while matching complexes and chessboard complexes are APC for certain
values of their defining parameters (see \cite{BLVZ}).
Color-shifted complexes, which are a common generalization of Ferrers
graphs and complete colorful complexes, are not in general Laplacian integral;
nevertheless, their weighted simplicial spanning tree enumerators seem to have
nice factorizations.

It is our pleasure to thank Richard Ehrenborg, Vic Reiner, and Michelle Wachs for many valuable discussions.
We also thank Andrew Crites and an anonymous referee for their careful reading of the manuscript.

\section{Notation and definitions}

\subsection{Simplicial complexes}

Let $V$ be a finite set.  A \emph{simplicial complex on $V$} is a
family $\Delta$ of subsets of $V$ such that
\begin{enumerate}
\item $\0\in\Delta$;
\item If $F\in\Delta$ and $G\subseteq F$, then $G\in\Delta$.
\end{enumerate}
The elements of $V$ are called \emph{vertices} of $\Delta$,
and the faces that are maximal under inclusion are called \emph{facets}.
Thus a simplicial complex is determined by its set of facets.
The \emph{dimension} of a face $F$ is $\dim F=|F|-1$, and the
dimension of $\Delta$ is the maximum dimension of a face (or facet).
The abbreviation $\Delta^d$ indicates that $\dim\Delta=d$.
We say that $\Delta$ is \emph{pure} if all facets have the same dimension;
in this case, a \emph{ridge} is a face of codimension~1, that is,
dimension $\dim\Delta-1$.

We write $\Delta_i$ for the set of $i$-dimensional faces of~$\Delta$,
and set $f_i(\Delta)=|\Delta_i|$.  The \emph{$i$-skeleton}
of $\Delta$ is the subcomplex of all faces of dimension~$\leq i$,
  $$\Delta_{(i)} = \bigcup_{-1\leq j\leq i} \Delta_j,$$
and the \emph{pure $i$-skeleton} of $\Delta$ is the subcomplex
generated by the $i$-dimensional faces, that is,
  $$\Delta_{[i]} = \{F \in \Delta\st  F \subseteq G \text{ for some } G \in \Delta_i\}.$$

We assume that the reader is familiar with
simplicial homology; see, e.g., \cite[\S2.1]{Hatcher}.
Let $\Delta^d$ be a simplicial complex and $-1\leq i\leq d$.
Let $R$ be a ring (if unspecified, assumed
to be $\Zz$), and let $C_i(\Delta)$ be the $i^{th}$
simplicial chain group of $\Delta$, i.e., the free $R$-module with
basis $\{[F] \st F\in\Delta_i\}$.  We denote the
simplicial boundary and coboundary maps respectively by
  \begin{align*}
  \bd_{\Delta,i}  &\;:\; C_i(\Delta) \to C_{i-1}(\Delta),\\
  \cbd_{\Delta,i} &\;:\; C_{i-1}(\Delta) \to C_i(\Delta),
  \end{align*}
where we have identified cochains with chains via the natural
inner product.  We will abbreviate the subscripts in
the notation for boundaries and coboundaries whenever no ambiguity can arise.  
We will often regard $\bd_i$ (resp.\ $\cbd_i$)
as a matrix whose columns and rows (resp.\ rows and columns)
are indexed by $\Delta_i$ and $\Delta_{i-1}$ respectively.
The \emph{$i^{th}$ (reduced) homology group} of $\Delta$ is
$\HH_i(\Delta)=\ker(\bd_i)/\im(\bd_{i+1})$, and the \emph{$i^{th}$
(reduced) Betti number} $\betti_i(\Delta)$ is the rank of the
largest free $R$-module summand of $\HH_i(\Delta)$.

\subsection{Combinatorial Laplacians}

We adopt the notation of \cite{DR} for the Laplacian operators (or, equivalently, matrices)
of a simplicial complex.  We summarize the notation and mention some fundamental
identities here.

We will often work with \emph{multisets} (of eigenvalues or of vertices),
in which each element occurs
with some non-negative integer multiplicity.  For brevity, we drop curly braces and commas
when working with multisets of integers: for instance, 5553 denotes the multiset
in which 5 occurs with multiplicity~three and 3 occurs with multiplicity~one.
The cardinality of a multiset is the sum of the multiplicities of its
elements; thus $|5553|=4$.  We write $\mathbf{a} \zeq \mathbf{b}$ to mean
that the multisets $\mathbf{a}$ and $\mathbf{b}$ differ only in their respective
multiplicities of zero; for instance, $5553\zeq 55530\zeq 555300$.
Of course, $\zeq$ is an equivalence relation.  The union operation $\cup$ on
multisets is understood to add multiplicities: for instance, $5553 \cup 5332 = 55553332$.

For $-1\leq i\leq\dim\Delta$,
define linear operators $\Lud_{\Delta,i}$, $\Ldu_{\Delta,i}$, $\Ltot_{\Delta,i}$ on the
vector space $C_i(\Delta)$ by
\begin{align*}
& \Lud_{\Delta,i}  = \bd_{i+1}\cbd_{i+1}            && (\text{the \emph{up-down Laplacian}}),\\
& \Ldu_{\Delta,i}  = \cbd_i\bd_i                    && (\text{the \emph{down-up Laplacian}}),\\
& \Ltot_{\Delta,i} = \Lud_{\Delta,i} + \Ldu_{\Delta,i} && (\text{the \emph{total Laplacian}}).
\end{align*}

The \emph{spectrum} $\stot_i(\Delta)$ of $\Ltot_{\Delta,i}$ is the
multiset of its eigenvalues (including zero); we define $\sud_i(\Delta)$
and $\sdu_i(\Delta)$ similarly.  Since each Laplacian operator is represented by a symmetric matrix,
it is diagonalizable, so
  $$|\stot_i(\Delta)|=|\sud_i(\Delta)|=|\sdu_i(\Delta)|=f_i(\Delta).$$

The various Laplacian spectra are related by the identities
  \begin{align*}
  \sud_i(\Delta)  &\zeq \sdu_{i+1}(\Delta),\\
  \stot_i(\Delta) &\zeq \sud_i(\Delta) \cup \sdu_i(\Delta)
  \end{align*}
\cite[eqn.~(3.6)]{DR}.  Therefore, each of the three families of multisets
  $$\{\stot_i(\Delta) \st -1\leq i\leq\dim\Delta\},\quad
    \{\sud_i(\Delta) \st -1\leq i\leq\dim\Delta\},\quad
    \{\sdu_i(\Delta) \st -1\leq i\leq\dim\Delta\}$$
determines the other two, and we will feel free to work with whichever
one is most convenient in context.

Combinatorial Laplacians and their spectra have been investigated for
a number of classes of simplicial complexes. In particular, it is
known that chessboard~\cite{Friedman-Hanlon},
matching~\cite{Dong-Wachs}, matroid~\cite{KRS}, and shifted~\cite{DR}
complexes are \emph{Laplacian integral}, i.e., all their Laplacian eigenvalues
are integers.  Understanding which complexes are Laplacian integral is an
open question.  As we will see, Laplacian eigenvalues and spanning
tree enumerators are inextricably linked.

\section{Simplicial spanning trees}

In this section, we generalize the notion of
a spanning tree to arbitrary dimension using simplicial homology,
following Kalai's idea.  Our definition makes sense for any ambient
complex that satisfies the relatively mild APC condition.

\begin{definition} \label{def_sst}
Let $\Delta^d$ be a simplicial complex, and let $k\leq d$.
A \emph{$k$-dimensional simplicial spanning tree} (for short, SST or $k$-SST) of
$\Delta$ is a $k$-dimensional subcomplex $\Upsilon\subseteq\Delta$ such that
$\Upsilon_{(k-1)} = \Delta_{(k-1)}$ and
  \begin{subequations}
  \begin{align}
  \label{acyc-condn}  & \HH_k(\Upsilon) = 0,\\
  \label{conn-condn}  & |\HH_{k-1}(\Upsilon)| < \infty, \quad\text{and}\\
  \label{count-condn} & f_k(\Upsilon) = f_k(\Delta) - \betti_k(\Delta) + \betti_{k-1}(\Delta).
  \end{align}
  \end{subequations}

We write $\SST_k(\Delta)$ for the set of all $k$-SST's of $\Delta^d$,
omitting the subscript if $k=d$.
Note that $\SST_k(\Delta)=\SST_k(\Delta_{(j)})$ for all $j\geq k$.  
\end{definition}

A zero-dimensional SST is just a vertex of $\Delta$.
If $\Delta$ is a 1-dimensional simplicial complex on $n$ vertices---that is, a graph---then
the definition of 1-SST coincides with the usual definition of a spanning tree of a graph:
namely, a subgraph of $\Delta$ which is connected, acyclic, and has $n-1$ edges. 
Next, we give a few examples in higher dimensions.

\begin{example}
\label{ex:sphere}
If $\Delta^d$ is a simplicial sphere (for instance, the boundary
of a simplicial polytope), then deleting any facet of $\Delta$ while keeping its
($d-1$)-skeleton intact produces a $d$-SST.
Therefore $|\SST(\Delta)|=f_d(\Delta)$.
\end{example}

\begin{example}
\label{ex:rp2}
In dimension $>1$, spanning trees need not be $\Zz$-acyclic, merely $\Qq$-acyclic.
For example, let $\Delta$ be a triangulation of the real projective plane,
so that $\dim\Delta=2$, $\HH_1(\Delta,\Zz)\isom\Zz/2\Zz$, and $\HH_1(\Delta,\Qq)=0$.
Then $\Delta$ satisfies the conditions of Definition~\ref{def_sst} and is a
2-SST of itself (in fact, the only such).
\end{example}

\begin{example}
\label{ex:shifted235}
Consider the equatorial bipyramid~$B$ of Example~\ref{ex:bipyramid}.  A 2-SST
of $B$ can be constructed by removing two facets $F,F'$, provided that $F\cap F'$
contains neither of the vertices 4,5.  A simple count shows that there are
15 such pairs $F,F'$, so $|\SST_2(B)|=15$.
\end{example}

Before proceeding any further, we show that Definition~\ref{def_sst} satisfies
a ``two-out-of-three theorem'' akin to that for spanning trees of graphs.

\begin{proposition} \label{two-out-of-three}
Let $\Upsilon\subset\Delta^d$ be a $k$-dimensional subcomplex with $\Upsilon_{(k-1)}=\Delta_{(k-1)}$.
Then any two of the conditions \eqref{acyc-condn}, \eqref{conn-condn},
\eqref{count-condn} together imply the third.
\end{proposition}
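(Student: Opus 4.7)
The plan is to derive a single linear identity that controls all three conditions simultaneously. Write $\rho := f_k(\Delta) - \betti_k(\Delta) + \betti_{k-1}(\Delta)$, so that \eqref{count-condn} asserts $f_k(\Upsilon) = \rho$. The strategy is to show that the three conditions \eqref{acyc-condn}, \eqref{conn-condn}, \eqref{count-condn} are equivalent to the vanishing of the three quantities $\betti_k(\Upsilon)$, $\betti_{k-1}(\Upsilon)$, and $f_k(\Upsilon) - \rho$, respectively, and that these three quantities satisfy the single linear relation
\[
f_k(\Upsilon) - \rho \;=\; \betti_k(\Upsilon) - \betti_{k-1}(\Upsilon).
\]
Given this, any two of the vanishings immediately force the third.

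To set this up, I would first translate the integer conditions to rational ones. Because $\Upsilon$ has no $(k+1)$-faces, $\HH_k(\Upsilon;\Zz) = \ker\partial_k^\Upsilon$ is a subgroup of the free abelian group $C_k(\Upsilon;\Zz)$ and is therefore itself free; in particular it vanishes iff $\betti_k(\Upsilon) = 0$, so \eqref{acyc-condn} is equivalent to its rational analogue. Condition \eqref{conn-condn} is equivalent to $\betti_{k-1}(\Upsilon) = 0$ by the classification of finitely generated abelian groups. For the master identity, apply rank--nullity over $\Qq$ to $\partial_k^\Upsilon\colon C_k(\Upsilon;\Qq) \to C_{k-1}(\Upsilon;\Qq)$: since its kernel is $\HH_k(\Upsilon;\Qq)$, this gives $f_k(\Upsilon) = \betti_k(\Upsilon) + \rank\partial_k^\Upsilon$. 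Next, the hypothesis $\Upsilon_{(k-1)} = \Delta_{(k-1)}$ forces $Z_{k-1}(\Upsilon;\Qq) = Z_{k-1}(\Delta;\Qq)$, since both are the kernel of the same map $\partial_{k-1}$ on $C_{k-1}(\Delta;\Qq)$; the inclusion $B_{k-1}(\Upsilon;\Qq) \subseteq Z_{k-1}(\Delta;\Qq)$ then yields $\rank\partial_k^\Upsilon = \dim Z_{k-1}(\Delta;\Qq) - \betti_{k-1}(\Upsilon)$. Finally, a parallel rank--nullity computation for $\partial_k^\Delta$ (in the main case $k = d$, where $\Delta$ has no $(k+1)$-faces and hence $\dim B_k(\Delta;\Qq) = 0$; the case $k<d$ may be reduced to this via the remark after Definition~\ref{def_sst}) gives $\dim Z_{k-1}(\Delta;\Qq) = \rho$.

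Combining the three displayed equalities produces the claimed master identity. The main obstacle is conceptual rather than technical: it is the recognition that the three seemingly disparate conditions are incarnations of the same linear relation among $\betti_k(\Upsilon)$, $\betti_{k-1}(\Upsilon)$, and $f_k(\Upsilon)-\rho$. Once this is seen, and the integer condition \eqref{acyc-condn} has been carefully reconciled with its rational counterpart via the torsion-freeness of $\HH_k(\Upsilon;\Zz)$, the remainder is two applications of rank--nullity.
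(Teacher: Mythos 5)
Your proof is correct, and it pivots on exactly the same master identity that drives the paper's argument --- $f_k(\Upsilon)-\rho=\betti_k(\Upsilon)-\betti_{k-1}(\Upsilon)$, with $\rho=f_k(\Delta)-\betti_k(\Delta)+\betti_{k-1}(\Delta)$ --- but you reach it by a different route. The paper computes $\chi(\Upsilon)$ twice, from the $f$-vector and from the Betti numbers, uses $f_\ell(\Upsilon)=f_\ell(\Delta)$ for $\ell\le k-1$ and $\betti_\ell(\Upsilon)=\betti_\ell(\Delta)$ for $\ell\le k-2$, and lets $\chi(\Delta)$ cancel; you instead concentrate all the bookkeeping in the top two dimensions: rank--nullity for $\partial_k^\Upsilon$ (whose kernel is $\HH_k(\Upsilon;\Qq)$ because $\Upsilon$ has no $(k+1)$-faces), the identification $Z_{k-1}(\Upsilon;\Qq)=Z_{k-1}(\Delta;\Qq)$ forced by $\Upsilon_{(k-1)}=\Delta_{(k-1)}$, and the count $\dim Z_{k-1}(\Delta;\Qq)=\rho$. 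Your reconciliation of the integral conditions with their rational analogues (freeness of $\HH_k(\Upsilon;\Zz)$ as a subgroup of $C_k$, the structure theorem for $\HH_{k-1}$) is precisely the paper's parenthetical remark. The one step requiring care is $\dim Z_{k-1}(\Delta;\Qq)=\rho$, which needs $B_k(\Delta;\Qq)=0$, i.e.\ essentially $k=d$; you flag this honestly, but the proposed reduction of $k<d$ to $k=d$ ``via the remark after Definition~\ref{def_sst}'' is not automatic, because replacing $\Delta$ by $\Delta_{(k)}$ changes $\betti_k$ and hence the right-hand side of \eqref{count-condn} (for $\Delta$ the full triangle and $k=1$, a spanning tree satisfies \eqref{acyc-condn} and \eqref{conn-condn} yet has $2\ne 3=\rho$ edges, so for $k<d$ the count condition must be read in the $k$-skeleton, as the Introduction's definition of $k$-trees indicates). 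In fairness, the paper's own proof carries the same implicit restriction --- its step $\sum_{i=0}^{k-1}(-1)^if_i(\Delta)=\chi(\Delta)-(-1)^kf_k(\Delta)$ likewise presumes $\Delta$ has no faces above dimension $k$ --- and the proposition is only ever invoked in that situation (e.g.\ for $\Gamma=\Delta_{(d-1)}$ in Lemma~\ref{right-size}); so your treatment is at least as careful, and what your more local derivation buys is that it makes visible exactly where the top-dimensionality of the ambient complex enters.
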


\begin{proof}
First, note that
  \begin{equation} \label{skeleton}
  f_\ell(\Upsilon) = f_\ell(\Delta)\quad\text{for } \ell\leq k-1
  \qquad\text{and}\qquad
  \betti_\ell(\Upsilon) = \betti_\ell(\Delta)\quad\text{for } \ell\leq k-2.
  \end{equation}
Next, we use the standard fact that the Euler characteristic $\chi(\Upsilon)$
can be calculated as the alternating sum either of the $f$-numbers or of the Betti numbers.
Thus
  \begin{align}
  \chi(\Upsilon) &= \sum_{i=0}^k (-1)^i f_i(\Upsilon) \notag\\
    &= (-1)^k f_k(\Upsilon) + \sum_{i=0}^{k-1} (-1)^i f_i(\Delta) \notag\\
    &= (-1)^k f_k(\Upsilon) + \chi(\Delta) - (-1)^k f_k(\Delta) \label{Euler:1}
  \end{align}
and on the other hand,
  \begin{align}
  \chi(\Upsilon) &= \sum_{i=0}^k (-1)^i \betti_i(\Upsilon) \notag\\
    &= (-1)^k(\betti_k(\Upsilon)-\betti_{k-1}(\Upsilon)) + \sum_{i=0}^{k-2} (-1)^i \betti_i(\Delta) \notag\\
    &= (-1)^k(\betti_k(\Upsilon)-\betti_{k-1}(\Upsilon)) + \chi(\Delta) - (-1)^k(\betti_k(\Delta)-\betti_{k-1}(\Delta)). 
\label{Euler:2}
  \end{align}
Equating \eqref{Euler:1} and \eqref{Euler:2} gives
  $$f_k(\Upsilon)-f_k(\Delta) = \betti_k(\Upsilon)-\betti_{k-1}(\Upsilon)-\betti_k(\Delta)+\betti_{k-1}(\Delta)$$
or equivalently
  $$\big(f_k(\Upsilon)-f_k(\Delta)+\betti_k(\Delta)-\betti_{k-1}(\Delta)\big)
    -\betti_k(\Upsilon)+\betti_{k-1}(\Upsilon) = 0.$$
Since \eqref{acyc-condn} says that $\betti_k(\Upsilon)=0$ (note that $\HH_k(\Upsilon)$ must be
free abelian) and \eqref{conn-condn} says that $\betti_{k-1}(\Upsilon)=0$, the
conclusion follows.
\end{proof}

\begin{definition}
A simplicial complex $\Delta^d$ is \emph{acyclic in positive codimension}, or APC for short,
if $\betti_j(\Delta) = 0$ for all $j < d$.
\end{definition}

Equivalently, a complex $\Delta^d$ is APC if it has
the homology type of a wedge of zero or more $d$-dimensional spheres.
In particular, any Cohen-Macaulay complex is APC.  The
converse is very far from true, because, for instance, an APC
complex need not even be pure.  For our purposes, the APC
complexes are the ``correct'' simplicial analogues of connected graphs
for the following reason.

\begin{proposition} \label{metaconn}
For any simplicial complex $\Delta^d$, the following are equivalent:
\begin{enumerate}
\item $\Delta$ is APC.
\item $\Delta$ has a $d$-dimensional spanning tree.
\item $\Delta$ has a $k$-dimensional spanning tree for every $k\leq d$.
\end{enumerate}
\end{proposition}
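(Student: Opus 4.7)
The plan is to prove the cyclic implications $(1) \Rightarrow (3) \Rightarrow (2) \Rightarrow (1)$; the middle step is immediate by specializing to $k=d$.

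For $(1) \Rightarrow (3)$, I would fix $k \leq d$ and build a $k$-SST by a greedy (matroid-theoretic) selection of $k$-faces. Since $\Delta$ is APC, the rational reduced homology $\HH_{k-1}(\Delta,\Qq)$ vanishes, so the image of $\bd_k$ rationally fills $\ker\bd_{k-1}$. I would then choose a subset $F \subseteq \Delta_k$ whose columns in the matrix $\bd_k$ form a maximal $\Qq$-linearly independent set, and let $\Upsilon$ be the subcomplex with $\Upsilon_{(k-1)} = \Delta_{(k-1)}$ and $\Upsilon_k = F$. Linear independence yields $\HH_k(\Upsilon) = 0$, while the equality $\im(\bd_k|_\Upsilon) \tensor \Qq = \im(\bd_k)\tensor\Qq$ ensures that $\HH_{k-1}(\Upsilon)$ is an extension of $\HH_{k-1}(\Delta)$ (finite by APC) by a finite group, hence itself finite. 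The face-count condition~\eqref{count-condn} then follows from Proposition~\ref{two-out-of-three}.

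For $(2)\Rightarrow(1)$, I would let $\Upsilon$ be a $d$-SST and exploit the coincidence $\Upsilon_{(d-1)} = \Delta_{(d-1)}$. The relative chain complex is a free abelian group concentrated in degree $d$, so the long exact sequence of the pair $(\Delta,\Upsilon)$ yields $\HH_j(\Upsilon) \cong \HH_j(\Delta)$ for $j \leq d-2$ together with a surjection $\HH_{d-1}(\Upsilon) \twoheadrightarrow \HH_{d-1}(\Delta)$. Finiteness of $\HH_{d-1}(\Upsilon)$ thus forces $\betti_{d-1}(\Delta) = 0$. For the lower Betti numbers, I would apply the greedy construction of $(1)\Rightarrow(3)$ inside $\Upsilon$ at each smaller dimension, using the $f$-count condition to propagate the vanishing of Betti numbers of $\Upsilon$ (and hence, via the isomorphisms above, of $\Delta$) below degree $d-1$.

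The main obstacle will be this last step of $(2) \Rightarrow (1)$: the $d$-SST axioms directly constrain only $\HH_d(\Upsilon)$ and $\HH_{d-1}(\Upsilon)$, so the vanishing of $\betti_j$ for $j < d-1$ has to be teased out by combining the face-count condition with a skeleton-by-skeleton analysis in the spirit of the Euler-characteristic bookkeeping already used in the proof of Proposition~\ref{two-out-of-three}.
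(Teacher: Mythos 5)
The parts of your plan that you actually carry out are sound and essentially reproduce the paper's own arguments: your column-basis construction for $(1)\Rightarrow(3)$ is the paper's greedy facet-removal argument in matroid language (one small caution: for $k<d$ you should first pass to the skeleton $\Delta_{(k)}$, which is again APC, and read condition \eqref{count-condn} and Proposition~\ref{two-out-of-three} with $\Delta_{(k)}$ as the ambient complex, as the paper does when it reduces to the case $k=d$; that is what makes the face count come out to the rank of $\bd_k$), and your surjection $\HH_{d-1}(\Upsilon)\twoheadrightarrow\HH_{d-1}(\Delta)$ forcing $\betti_{d-1}(\Delta)=0$ is exactly the paper's argument for that portion of $(2)\Rightarrow(1)$.

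The step you defer --- extracting $\betti_j(\Delta)=0$ for $j<d-1$ from the existence of a single $d$-SST --- is a genuine gap, and it cannot be filled: Definition~\ref{def_sst} constrains only $\HH_d(\Upsilon)$ and $\HH_{d-1}(\Upsilon)$, and rerunning the greedy construction inside $\Upsilon$ in lower dimensions produces subcomplexes whose $(k-1)$-st homology is finite only when $\betti_{k-1}$ of the ambient complex already vanishes, which is precisely what you are trying to prove. In fact $(2)\Rightarrow(1)$ is false as literally stated: the disjoint union of two $2$-simplices satisfies \eqref{acyc-condn}--\eqref{count-condn} with $\Upsilon=\Delta$, so it has a $2$-SST, yet $\betti_0(\Delta)=1$, so it is not APC; in general, possessing a $d$-SST is equivalent to the single condition $\betti_{d-1}(\Delta)=0$ and says nothing about lower Betti numbers. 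Be aware that the paper's own proof makes the same unjustified leap (it asserts $\HH_i(\Upsilon)=0$ for all $i\le d-2$, which is not part of the definition), so you will not find the missing idea there. What your tools do prove is $(3)\Rightarrow(1)$: apply your surjection argument in every dimension $k\le d$, using $\HH_{k-1}(\Delta)=\HH_{k-1}(\Delta_{(k)})$, to get $\betti_{k-1}(\Delta)=0$ for all $k\le d$. Combined with $(1)\Rightarrow(3)\Rightarrow(2)$, this yields the correct statement $(1)\Leftrightarrow(3)$ together with $(1)\Rightarrow(2)$, which is all that the rest of the paper actually uses, since its main theorems assume $\Delta$ is APC and only need the existence of lower-dimensional SSTs.
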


\begin{proof}
It is trivial that (3) implies (2).
To see that (2) implies (1), suppose that $\Delta$ has a $d$-dimensional
spanning tree $\Upsilon$.  Then $\Upsilon_i=\Delta_i$ for all $i\leq d-1$, so
$\HH_i(\Delta)=\HH_i(\Upsilon)=0$ for all $i\leq d-2$.  Moreover, in the diagram
  $$
  \begin{array}{ccccc}
  C_d(\Delta) & \xrightarrow{\bd_{\Delta,d}} & C_{d-1}(\Delta) & \xrightarrow{\bd_{\Delta,d-1}} & C_{d-2}(\Delta) \\
  \bigcup & & \Vert & & \Vert \\
  C_d(\Upsilon) & \xrightarrow{\bd_{\Upsilon,d}} & C_{d-1}(\Upsilon) & \xrightarrow{\bd_{\Upsilon,d-1}} & C_{d-2}(\Upsilon)
  \end{array}
  $$
we have $\ker\bd_{\Delta,d-1} = \ker\bd_{\Upsilon,d-1}$
and $\im\bd_{\Delta,d}\supseteq \im\bd_{\Upsilon,d}$,
so there is a surjection $0=\HH_{d-1}(\Upsilon) \rightarrow \HH_{d-1}(\Delta)$,
implying that $\Delta$ is APC.

To prove (1) implies (3), it suffices to consider the case $k=d$, because
any skeleton of an APC complex is also APC. We can construct a $d$-SST $\Upsilon$ by the following
algorithm.  Let $\Upsilon=\Delta$.  If $\HH_d(\Upsilon)\neq 0$, then there
is some nonzero linear combination of facets of $\Upsilon$ that is mapped to
zero by $\bd_{\Upsilon,d}$.  Let $F$ be one of those facets, and let
$\Upsilon'=\Upsilon\sm\{F\}$.  Then $\betti_d(\Upsilon')=\betti_d(\Upsilon)-1$
and $\betti_i(\Upsilon')=\betti_i(\Upsilon)$ for $i\leq d-2$, and by the Euler
characteristic formula, we have $\betti_{d-1}(\Upsilon')=\betti_{d-1}(\Upsilon)$
as well.  Replacing $\Upsilon$ with $\Upsilon'$ and repeating, we eventually
arrive at the case $\HH_d(\Upsilon)=0$, when $\Upsilon$ is a $d$-SST
of $\Delta$.
\end{proof}

The APC condition is a fairly mild one.  For instance, any
$\Qq$-acyclic complex is clearly APC (and is its own unique
SST), as is any Cohen-Macaulay complex (in particular,
any shifted complex).

\section{Simplicial analogues of the Matrix-Tree Theorem}
\label{SMTT-section}

We now explain how to enumerate simplicial spanning trees of a complex
using its Laplacian.
Throughout this section, let $\Delta^d$ be an APC simplicial complex
on vertex set $[n]$.  For $k\leq d$, define
\begin{align*}
  \pi_k &=\pi_k(\Delta) = \prod_{0\neq\lambda\in\sud_{k-1}(\Delta)} \lambda,&
  \tau_k &=\tau_k(\Delta) = \sum_{\Upsilon\in\SST_k(\Delta)} |\HH_{k-1}(\Upsilon)|^2.
\end{align*}

We are interested in the relationships between these two
families of invariants.  When $d=1$, the relationship is given by Theorem~\ref{CMTT}.
In the notation
just defined, part (1) of that theorem says that $\tau_1=\pi_1/n$, and part (2) says that
$\tau_1=\det L_i$ (i.e., the determinant of the reduced Laplacian obtained from $\Lud_{\Delta,0}$
by deleting the row and column corresponding to any vertex $i$).

The results of this section generalize both parts of the Matrix-Tree Theorem from graphs to
all APC complexes~$\Delta^d$.
Our arguments are closely based on those used by Kalai \cite{Kalai} and Adin
\cite{Adin} to enumerate SST's of skeletons of simplices
and of complete colorful complexes.

We begin by setting up some notation.  Abbreviate
$\betti_i=\betti_i(\Delta)$, $f_i=f_i(\Delta)$, and $\bd=\bd_{\Delta,d}$.
Let $T$ be a set of facets of $\Delta$ of cardinality $f_d-\betti_d+\betti_{d-1}=f_d-\betti_d$,
and let $S$ be a set of ridges such that $|S|=|T|$.  Define
$$
\Delta_T = T \cup \Delta_{(d-1)},\qquad
\bar{S} = \Delta_{(d-1)}\setminus S,\qquad
\Delta_{\bar{S}} = \bar{S} \cup \Delta_{(d-2)},
$$
and let $\bd_{S,T}$ be the square submatrix of $\bd$ with rows indexed by $S$ and columns
indexed by $T$.

\begin{proposition}  \label{nonsingular-criterion}
The matrix $\bd_{S,T}$ is nonsingular if and only if $\Delta_T\in\SST_d(\Delta)$ and $\Delta_{\bar{S}}\in
\SST_{d-1}(\Delta)$.
\end{proposition}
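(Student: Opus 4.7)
The plan is to pass to rational coefficients, translate each $\SST$-membership condition into an injectivity statement about a boundary map, and then link the two via the APC hypothesis. Write $C_T \subseteq C_d(\Delta)\otimes\Qq$ for the rational span of $T$, and $C_S, C_{\bar S} \subseteq C_{d-1}(\Delta)\otimes\Qq$ for the analogous spans of $S$ and $\bar S$. Let $\pi_S$ be the projection onto $C_S$ with kernel $C_{\bar S}$. Then $\bd_{S,T}$ is exactly the composition $\pi_S \circ \bd|_{C_T}$, viewed as a map between $\Qq$-vector spaces of equal dimension $|T|=|S|$, so nonsingularity is equivalent to injectivity.

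First I would handle the bookkeeping. Since $\Delta$ is APC, $\betti_{d-1}(\Delta)=0$, so $|T|=|S|=f_d-\betti_d$; a short Euler-characteristic computation in $\Delta_{(d-1)}$ shows that $\betti_{d-1}(\Delta_{(d-1)})=f_d-\betti_d$, and therefore $|\bar S|=f_{d-1}-f_d+\betti_d$ is exactly the cardinality required of a $(d-1)$-SST of $\Delta$. So the face-count condition \eqref{count-condn} is automatic on both sides, and Proposition~\ref{two-out-of-three} reduces each membership test to a single homological vanishing: $\Delta_T \in \SST_d(\Delta)$ iff $\HH_d(\Delta_T)=0$, iff $\bd|_{C_T}$ is injective over $\Qq$ (using that $\HH_d(\Delta_T)$ is free abelian as a top-dimensional homology group); and $\Delta_{\bar S}\in\SST_{d-1}(\Delta)$ iff $\HH_{d-1}(\Delta_{\bar S})=0$, iff $\ker(\bd_{\Delta,d-1})\cap C_{\bar S}=0$.

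The decisive ingredient is that APC forces $\im(\bd)=\ker(\bd_{\Delta,d-1})$ over $\Qq$, since their quotient computes $\betti_{d-1}(\Delta)=0$. Moreover, whenever $\bd|_{C_T}$ is injective, a dimension count (using $\dim\ker(\bd)=\betti_d$ and $\dim C_T=f_d-\betti_d$) shows $C_d(\Delta)\otimes\Qq = C_T\oplus\ker(\bd)$, so $\im(\bd|_{C_T})=\im(\bd)$. Combining these two observations yields the crucial identity $\im(\bd|_{C_T})=\ker(\bd_{\Delta,d-1})$ \emph{whenever} $\Delta_T$ is an SST.

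With these pieces in hand the equivalence is a direct unwinding: $\pi_S\circ\bd|_{C_T}$ is injective iff $\bd|_{C_T}$ is injective \emph{and} $\im(\bd|_{C_T})\cap C_{\bar S}=0$. The first clause translates exactly to $\Delta_T\in\SST_d(\Delta)$; granting it, the crucial identity rewrites the second clause as $\ker(\bd_{\Delta,d-1})\cap C_{\bar S}=0$, i.e.\ $\Delta_{\bar S}\in\SST_{d-1}(\Delta)$. I expect the main obstacle to lie in the opening bookkeeping step---verifying that the hypothesis $|T|=f_d-\betti_d$ really does force the face-count condition on $\Delta_{\bar S}$ (not just on $\Delta_T$), so that Proposition~\ref{two-out-of-three} can be applied symmetrically on both sides. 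After that, the argument is a short piece of linear algebra held together by the single APC-driven identification $\im(\bd)=\ker(\bd_{\Delta,d-1})$.
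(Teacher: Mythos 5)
Your argument is correct, but it follows a genuinely different route from the paper's. The paper interprets $\bd_{S,T}$ as the top boundary map of the relative complex $\Gamma=(\Delta_T,\Delta_{\bar{S}})$, so that nonsingularity becomes $\HH_d(\Gamma)=0$, and then extracts both SST conditions from the long exact sequence of the pair, invoking the facet counts and torsion-freeness of top homology at the same points where you do (via Proposition~\ref{two-out-of-three}). You dispense with relative homology entirely: after the same bookkeeping (your Euler-characteristic computation is essentially Lemma~\ref{right-size}; you should also note in passing that $\betti_{d-2}(\Delta_{(d-1)})=\betti_{d-2}(\Delta)=0$ by APC, so the required facet count for a $(d-1)$-SST really is $f_{d-1}-f_d+\betti_d$), you work over $\Qq$, factor $\bd_{S,T}=\pi_S\circ\bd|_{C_T}$, and replace the exact sequence by two pieces of linear algebra: injectivity of $\bd|_{C_T}$ plus a dimension count gives $C_d(\Delta)\otimes\Qq=C_T\oplus\ker\bd$, and APC gives $\im\bd=\ker\bd_{\Delta,d-1}$ over $\Qq$; the passage between $\Zz$- and $\Qq$-statements is legitimate because the relevant homology groups are top-dimensional, hence free. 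Your version is more elementary and isolates exactly where APC is used. What the paper's relative-complex viewpoint buys is reuse: the same pair $\Gamma$ and the same exact sequence immediately yield Proposition~\ref{detD-formula}, where one needs the actual order $|\det\bd_{S,T}|=|\HH_{d-1}(\Gamma)|$ rather than mere (non)vanishing, and that integral torsion information is invisible to a purely rational argument like yours.
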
 

\begin{proof}
We may regard $\bd_{S,T}$ as the top boundary map of the $d$-dimensional
relative complex $\Gamma =(\Delta_T,\Delta_{\bar{S}})$.  So $\bd_{S,T}$ is nonsingular
if and only if $\HH_d(\Gamma)=0$.  Consider the long exact sequence
  \begin{equation} \label{long-exact}
  0 \rightarrow \HH_d(\Delta_{\bar{S}}) \rightarrow \HH_d(\Delta_T) \rightarrow \HH_d(\Gamma)
    \rightarrow \HH_{d-1}(\Delta_{\bar{S}}) \rightarrow \HH_{d-1}(\Delta_T) \rightarrow \HH_{d-1}(\Gamma)
    \rightarrow \cdots
  \end{equation}
If $\HH_d(\Gamma)\neq 0$, then $\HH_d(\Delta_T)$ and $\HH_{d-1}(\Delta_{\bar{S}})$
cannot both be zero.  This proves the ``only if'' direction.
 
If $\HH_d(\Gamma)=0$, then $\HH_d(\Delta_{\bar{S}})=0$ (since $\dim\Delta_{\bar{S}} = d-1$), so
\eqref{long-exact} implies $\HH_d(\Delta_T)=0$.  Therefore $\Delta_T$ is a $d$-tree,
because it has the correct number of facets.  Hence $\HH_{d-1}(\Delta_T)$ is finite.
Then \eqref{long-exact} implies that $\HH_{d-1}(\Delta_{\bar{S}})$ is finite.  In fact, it is zero
because the top homology group of any complex must be
torsion-free.  Meanwhile, $\Delta_{\bar{S}}$ has the correct number of facets to be a
$(d-1)$-SST of $\Delta$, proving the ``if'' direction.
\end{proof}

\begin{proposition} \label{detD-formula}
If $\bd_{S,T}$ is nonsingular, then
  $$|\det \bd_{S,T}| = 
    \frac{|\HH_{d-1}(\Delta_T)| \cdot |\HH_{d-2}(\Delta_{\bar{S}})|}{ |\HH_{d-2}(\Delta_T)|} = 
    \frac{|\HH_{d-1}(\Delta_T)| \cdot |\HH_{d-2}(\Delta_{\bar{S}})|}{ |\HH_{d-2}(\Delta)|}.$$
\end{proposition}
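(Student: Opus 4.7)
The plan is to identify $\bd_{S,T}$ with the only nontrivial boundary map of a two-term relative chain complex, and then extract the formula from the long exact sequence of that pair, essentially continuing the argument begun in the proof of Proposition~\ref{nonsingular-criterion}.

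To begin, set $\Gamma = (\Delta_T,\Delta_{\bar{S}})$. Since $\Delta_{\bar{S}}$ contains $\Delta_{(d-2)}$ but carries no $d$-dimensional faces, the relative chain groups in the relevant range are $C_d(\Gamma) = \Zz^T$, $C_{d-1}(\Gamma) = \Zz^S$, and $C_{d-2}(\Gamma) = 0$, and the only nontrivial relative boundary map is precisely $\bd_{S,T}$. Consequently $\HH_d(\Gamma) = \ker \bd_{S,T}$ and $\HH_{d-1}(\Gamma) = \mathrm{coker}\,\bd_{S,T}$. The nonsingularity hypothesis therefore forces $\HH_d(\Gamma) = 0$, and by Smith normal form $|\HH_{d-1}(\Gamma)| = |\det \bd_{S,T}|$.

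Next I would feed this into the long exact sequence~\eqref{long-exact} of the pair. By Proposition~\ref{nonsingular-criterion}, $\Delta_T$ is a $d$-SST and $\Delta_{\bar{S}}$ is a $(d-1)$-SST, so $\HH_d(\Delta_T) = 0$ and $\HH_{d-1}(\Delta_{\bar{S}}) = 0$, while $\HH_{d-1}(\Delta_T)$ and $\HH_{d-2}(\Delta_{\bar{S}})$ are finite by condition \eqref{conn-condn}; moreover $\HH_{d-2}(\Gamma) = 0$ since the relative complex vanishes in that degree. The long exact sequence therefore collapses to
\begin{equation*}
0 \to \HH_{d-1}(\Delta_T) \to \HH_{d-1}(\Gamma) \to \HH_{d-2}(\Delta_{\bar{S}}) \to \HH_{d-2}(\Delta_T) \to 0.
\end{equation*}
Since $\Delta_T$ and $\Delta$ share the same $(d-1)$-skeleton, their simplicial chain complexes agree in degrees $\leq d-1$; hence $\HH_{d-2}(\Delta_T) = \HH_{d-2}(\Delta)$, which is finite by the APC hypothesis $\betti_{d-2}(\Delta)=0$. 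Taking the alternating product of orders in the four-term sequence then yields the first claimed equality, and the identification $|\HH_{d-2}(\Delta_T)| = |\HH_{d-2}(\Delta)|$ just established delivers the second.

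The main obstacle I anticipate is verifying that every term in the four-term sequence really is a finite abelian group, so that the multiplicativity of orders may be applied. This reduces to careful bookkeeping with the APC condition together with the defining conditions of an SST, plus the standard fact that the top-dimensional integral homology of any complex is torsion-free (so vanishing Betti number translates into genuine finiteness of the group in question).
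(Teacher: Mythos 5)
Your proposal is correct and follows essentially the same route as the paper: interpret $\bd_{S,T}$ as the top boundary map of the relative complex $\Gamma=(\Delta_T,\Delta_{\bar S})$, note that the lower relative chain groups vanish so $|\det\bd_{S,T}|=|\HH_{d-1}(\Gamma)|$, and then take the alternating product of orders in the four-term piece of the long exact sequence of the pair. Your extra bookkeeping (exactness of the truncated sequence via $\HH_{d-1}(\Delta_{\bar S})=0$ and $\HH_{d-2}(\Gamma)=0$, finiteness of all terms, and $\HH_{d-2}(\Delta_T)=\HH_{d-2}(\Delta)$ from the shared $(d-1)$-skeleton) is exactly what the paper leaves implicit.
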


\begin{proof}
As before, we interpret $\bd_{S,T}$ as the boundary map of the relative complex
$\Gamma=(\Delta_T,\Delta_{\bar{S}})$.  So $\bd_{S,T}$ is a map from $\Z^{|T|}$ to
$\Z^{|T|}$, and $\Z^{|T|} / \bd_{S,T}(\Z^{|T|})$ is a finite abelian group
of order $|\det\bd_{S,T}|$.  On the other hand, since $\Gamma$ has no faces
of dimension $\leq d-2$, its lower boundary maps are
all zero, so $|\det\bd_{S,T}| = |\HH_{d-1}(\Gamma)|$.  Since  $\HH_{d-2}(\Delta_T)$ is finite,
the desired result now follows from the piece
  \begin{equation} \label{long-exact:2}
  0 \rightarrow \HH_{d-1}(\Delta_T) \rightarrow \HH_{d-1}(\Gamma) \rightarrow \HH_{d-2}(\Delta_{\bar{S}}) 
\rightarrow \HH_{d-2}(\Delta_T) \rightarrow 0
  \end{equation}
of the long exact sequence \eqref{long-exact}.
\end{proof}

We can now prove the first version of the Simplicial Matrix-Tree Theorem,
relating the quantities $\pi_d$ and $\tau_d$.  Abbreviate $L=\Lud_{\Delta,d-1}$.

\setcounter{savesection}{\value{section}}
\setcounter{section}{\value{intro-section-counter}}
\setcounter{savetheorem}{\value{theorem}}
\setcounter{theorem}{\value{SMTT-counter}}
\begin{theorem}[{\bf Simplicial Matrix-Tree Theorem}]
Let $\Delta^d$ be an APC simplicial complex.  Then:
\begin{enumerate}
\item We have
  $$\pi_d(\Delta) = \frac{\tau_d(\Delta) \tau_{d-1}(\Delta)}{|\HH_{d-2}(\Delta)|^2}.$$
\item Let $U$ be the set of facets of a $(d-1)$-SST of $\Delta$, 
and let $L_U$ denote the reduced Laplacian\footnote{%
  A warning: This notation for reduced Laplacians specifies which rows and columns to
  \emph{exclude} (in analogy to the notation $L_i$ in the statement of Theorem~\ref{CMTT}),
  in contrast to the notation $\bd_{S,T}$ for restricted boundary maps, which specifies
  which rows and columns to \emph{include}.}
obtained by deleting the rows and columns of
$L$ corresponding to $U$. 
Then 
  $$\tau_d(\Delta) = \frac{|\HH_{d-2}(\Delta)|^2}{|\HH_{d-2}(\Delta_U)|^2} \det L_U.$$
\end{enumerate}
\end{theorem}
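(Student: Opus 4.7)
My plan is to prove part~(2) directly by the Cauchy--Binet formula and then deduce part~(1) by expressing the product of nonzero eigenvalues of $L$ as a sum of principal minors and evaluating each via part~(2). This follows the general strategy of Kalai \cite{Kalai} and Adin \cite{Adin} in their respective special cases.

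For part~(2), I would set $S = \Delta_{d-1}\setminus U$, so that $L_U = \bd_{S,*}(\bd_{S,*})^T$, where $\bd_{S,*}$ denotes the submatrix of $\bd = \bd_{\Delta,d}$ whose rows are indexed by $S$. The key preliminary step is to verify the numerical coincidence $|S| = f_d - \betti_d$ (the common facet count of every $d$-SST), so that the relevant matrices $\bd_{S,T}$ in the Cauchy--Binet expansion
$$\det L_U = \sum_{T\subseteq\Delta_d,\ |T|=|S|} (\det\bd_{S,T})^2$$
are square. This follows from a short rank-nullity calculation using APC (namely $\dim\ker\bd_{d-1} = \rank\bd_d$ because $\betti_{d-1}(\Delta)=0$, and $|U| = f_{d-1} - \dim\ker\bd_{d-1}$ because $\betti_{d-2}(\Delta)=0$). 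Then Proposition~\ref{nonsingular-criterion}, together with the hypothesis $\Delta_U\in\SST_{d-1}(\Delta)$, says the summand vanishes unless $\Delta_T\in\SST_d(\Delta)$, while Proposition~\ref{detD-formula} evaluates each surviving square as $|\HH_{d-1}(\Delta_T)|^2|\HH_{d-2}(\Delta_U)|^2/|\HH_{d-2}(\Delta)|^2$. Summing over $\SST_d(\Delta)$ and rearranging yields part~(2).

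For part~(1), I would invoke the fact that for a symmetric positive semi-definite matrix of rank~$r$, the product of nonzero eigenvalues equals the sum of all $r\times r$ principal minors (both equal $e_r$ of the eigenvalue multiset). Since $\rank L = \rank\bd_d = f_d - \betti_d = |S|$, each principal minor of this size has complement of size $|U|$, so
$$\pi_d(\Delta) = \sum_{U'\subseteq\Delta_{d-1},\ |U'|=|U|} \det L_{U'}.$$
Applying Proposition~\ref{nonsingular-criterion} once more to each $\det L_{U'}$ shows that the summand vanishes unless $\Delta_{U'}\in\SST_{d-1}(\Delta)$; substituting part~(2) into each surviving term then collapses the sum to $\frac{\tau_d(\Delta)}{|\HH_{d-2}(\Delta)|^2} \sum_{\Upsilon\in\SST_{d-1}(\Delta)} |\HH_{d-2}(\Upsilon)|^2 = \tau_d(\Delta)\tau_{d-1}(\Delta)/|\HH_{d-2}(\Delta)|^2$.

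The main obstacle I anticipate is the rank calculation $\rank L = f_d - \betti_d$, which is the one place where the APC hypothesis is essential rather than ornamental: it is precisely what forces $|S|$ to equal the facet count of a $d$-SST, guaranteeing that the Cauchy--Binet expansion consists of square matrices to which Propositions~\ref{nonsingular-criterion} and~\ref{detD-formula} can be applied. A minor additional point to confirm is that a $(d-1)$-SST is determined by its facet set (since its $(d-2)$-skeleton must equal $\Delta_{(d-2)}$), so that summing over facet sets and over $\SST_{d-1}(\Delta)$ yield the same thing.
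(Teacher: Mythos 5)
Your proposal is correct and follows essentially the same route as the paper: Cauchy--Binet applied to the reduced Laplacian, Propositions~\ref{nonsingular-criterion} and~\ref{detD-formula} to identify and evaluate the nonvanishing minors, and a size/rank lemma (your rank--nullity computation is just a repackaging of the paper's Euler-characteristic argument in Lemma~\ref{right-size}). The only organizational difference is that you prove part~(2) first and deduce part~(1) from it via the sum-of-principal-minors identity, whereas the paper proves part~(1) directly by the same expansion and interchange of sums; the underlying computation is identical.
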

\setcounter{section}{\value{savesection}}
\setcounter{theorem}{\value{savetheorem}}

\begin{proof}[Proof of Theorem~\ref{thm:SMTT}~(1)]
The Laplacian $L$ is a square matrix with $f_{d-1}$ rows and columns,
and rank $f_d-\betti_d=f_d-\betti_d+\betti_{d-1}$ (because $\Delta$
is APC).  Let $\chi(L;y)=\det(yI-L)$ be
its characteristic polynomial (where $I$ is an identity matrix), so that $\pi_d(\Delta)$, the product of
the nonzero eigenvalues of $L$, is given (up to sign) by the coefficient
of $y^{f_{d-1}-f_d+\betti_d}$ in $\chi(L;y)$.  Equivalently,
  \begin{equation} \label{pi-formula}
  \pi_d ~= \sum_{\substack{S\subset\Delta_{d-1}\\ |S|=\rank L}}\!\!\!\!\! \det L_U
        ~= \sum_{\substack{S\subset\Delta_{d-1}\\ |S|=f_d-\betti_d}}\!\!\!\!\!\det L_U
  \end{equation}
where $U=\Delta_{d-1}\sm S$ in each summand.  By the Binet-Cauchy formula, we have
  \begin{equation} \label{LT-formula}
  \det L_U ~= \sum_{\substack{T\subset\Delta_d\\ |T|=|S|}}\!\!(\det\bd_{S,T})(\det\cbd_{S,T})
           ~= \sum_{\substack{T\subset\Delta_d\\ |T|=|S|}}\!\!(\det\bd_{S,T})^2.
  \end{equation}
Combining \eqref{pi-formula} and \eqref{LT-formula}, applying
Proposition~\ref{nonsingular-criterion}, and interchanging the sums, we obtain
  $$\pi_d = \sum_{T:\Delta_T\in\SST_d(\Delta)}\ \ \sum_{S:\Delta_{\bar{S}}\in\SST_{d-1}(\Delta)} (\det\bd_{S,T})^2$$
and now applying Proposition~\ref{detD-formula} yields
  \begin{align*}
    \pi_d &=  \sum_{T:\Delta_T\in\SST_d(\Delta)}\ \ \sum_{S:\Delta_{\bar{S}}\in\SST_{d-1}(\Delta)}
    \left(\frac{|\HH_{d-1}(\Delta_T)|\cdot|\HH_{d-2}(\Delta_{\bar{S}})|}{|\HH_{d-2}(\Delta)|}\right)^2\\\\
    &= \frac{ \left(\displaystyle\sum_{T:\Delta_T\in\SST_d(\Delta)} |\HH_{d-1}(\Delta_T)|^2\right)
              \left(\displaystyle\sum_{S:\Delta_{\bar{S}}\in\SST_{d-1}(\Delta)} |\HH_{d-2}(\Delta_{\bar{S}})|\right)}
            {|\HH_{d-2}(\Delta)|^2}
  \end{align*}
as desired.
\end{proof}

In order to prove the ``reduced Laplacian'' part of Theorem~\ref{thm:SMTT},
we first check that when we delete the rows of $\bd$ corresponding to a
$(d-1)$-SST, the resulting reduced Laplacian has the correct size,
namely, that of a $d$-SST.

\begin{lemma} \label{right-size}
Let $U$ be the set of facets of a $(d-1)$-SST of $\Delta$, and let
$S = \Delta_{d-1} \sm U$.
Then $|S|=f_d(\Delta)-\betti_d(\Delta)$, the number of facets of a $d$-SST
of $\Delta$.
\end{lemma}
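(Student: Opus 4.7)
The plan is short and direct: I will apply the SST face-count condition \eqref{count-condn} to $U$ regarded as a $(d-1)$-SST and simplify using the APC hypothesis, with a single rank-nullity chase along the top of the chain complex of $\Delta$ doing the real work.

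First I would apply \eqref{count-condn} with $k=d-1$, taking the ambient complex to be $\Delta_{(d-1)}$ (legal since $\SST_{d-1}(\Delta) = \SST_{d-1}(\Delta_{(d-1)})$). This gives
$$|U| = f_{d-1}(\Delta) - \betti_{d-1}(\Delta_{(d-1)}) + \betti_{d-2}(\Delta_{(d-1)}).$$
The APC hypothesis immediately kills the last term, since $\betti_{d-2}(\Delta_{(d-1)}) = \betti_{d-2}(\Delta) = 0$ (this Betti number depends only on faces of dimension $\le d-1$). So the only substantive step is to evaluate $\betti_{d-1}(\Delta_{(d-1)})$ in terms of the invariants of $\Delta$.

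For this I would note that $\Delta_{(d-1)}$ has no $d$-faces, so $\betti_{d-1}(\Delta_{(d-1)}) = \dim \ker \bd_{d-1}$, with $\bd_{d-1}$ computed in either $\Delta$ or $\Delta_{(d-1)}$ (the kernel is the same). The APC identity $\betti_{d-1}(\Delta) = \dim \ker \bd_{d-1} - \rank \bd_d = 0$ then gives $\dim \ker \bd_{d-1} = \rank \bd_d$; and applying rank-nullity to $\bd_d$ itself (using that $\dim \ker \bd_d = \betti_d(\Delta)$ since $\Delta$ has no $(d+1)$-faces) yields $\rank \bd_d = f_d(\Delta) - \betti_d(\Delta)$.

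Substituting back gives $|U| = f_{d-1}(\Delta) - (f_d(\Delta) - \betti_d(\Delta))$, and hence $|S| = f_{d-1}(\Delta) - |U| = f_d(\Delta) - \betti_d(\Delta)$, as claimed. No step is really a hard obstacle; the only care required is distinguishing $\betti_{d-1}(\Delta)$ (which is zero by APC) from $\betti_{d-1}(\Delta_{(d-1)})$ (which is not), since deleting the $d$-faces of $\Delta$ creates exactly $\rank \bd_d$ new $(d-1)$-cycles.
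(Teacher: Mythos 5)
Your argument is correct and is essentially the paper's proof: both apply the SST face-count condition \eqref{count-condn} to $U$ inside $\Delta_{(d-1)}$, use APC to kill the $\betti_{d-2}$ term, and reduce to showing $\betti_{d-1}(\Delta_{(d-1)})=f_d(\Delta)-\betti_d(\Delta)$. The only difference is cosmetic: the paper derives that identity by comparing the Euler characteristics of $\Delta$ and $\Delta_{(d-1)}$, while you obtain it by rank--nullity applied to $\bd_d$ and $\bd_{d-1}$ (using $\betti_{d-1}(\Delta)=0$), which is the same bookkeeping in a different dress.
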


\begin{proof}
Let $\Gamma=\Delta_{(d-1)}$.  By Proposition~\ref{two-out-of-three}
and the observation \eqref{skeleton},
$|U|=f_{d-1}(\Gamma)-\betti_{d-1}(\Gamma)+\betti_{d-2}(\Gamma)=
f_{d-1}(\Delta)-\betti_{d-1}(\Gamma)$, so $|S|=\betti_{d-1}(\Gamma)$.
  The Euler characteristics of
$\Delta$ and $\Gamma$ are
  \begin{align*}
  \chi(\Delta) = \sum_{i=0}^d (-1)^i f_i(\Delta)     &= \sum_{i=0}^d (-1)^i \betti_i(\Delta),\\
  \chi(\Gamma) = \sum_{i=0}^{d-1} (-1)^i f_i(\Gamma) &= \sum_{i=0}^{d-1} (-1)^i \betti_i(\Gamma).
  \end{align*}
By \eqref{skeleton}, we see that
  $$
  \chi(\Delta)-\chi(\Gamma)
    ~=~ (-1)^d f_d(\Delta)
    ~=~ (-1)^d \betti_d(\Delta) + (-1)^{d-1}\betti_{d-1}(\Delta) - (-1)^{d-1}\betti_{d-1}(\Gamma)
  $$
from which we obtain $f_d(\Delta) = \betti_d(\Delta) - \betti_{d-1}(\Delta) + \betti_{d-1}(\Gamma)$.
Since $\Delta$ is APC, we have $\betti_{d-1}(\Delta)=0$, so
$|S| = \betti_{d-1}(\Gamma) = f_d(\Delta) - \betti_d(\Delta)$ as desired.
\end{proof}

\begin{proof}[Proof of Theorem~\ref{thm:SMTT}~(2)]
By the Binet-Cauchy formula, we have
$$
\det L_U = \sum_{T:\ |T|=|S|} (\det\bd_{S,T})(\det\cbd_{S,T})
= \sum_{T:\ |T|=|S|} (\det\bd_{S,T})^2.
$$

By Lemma~\ref{right-size} and Proposition~\ref{nonsingular-criterion}, $\bd_{S,T}$ is nonsingular exactly
when $\Delta_T\in\SST_d(\Delta)$.  Hence Proposition~\ref{detD-formula} gives
  \begin{align*}
  \det L_U &= \sum_{T:\Delta_T \in \SST_d(\Delta)}
             \left(\frac{|\HH_{d-1}(\Delta_T)|\cdot|\HH_{d-2}(\Delta_U)|}
             {|\HH_{d-2}(\Delta)|}\right)^2 \\
           &= \frac{|\HH_{d-2}(\Delta_U)|^2}{|\HH_{d-2}(\Delta)|^2}
             \sum_{T:\Delta_T\in\SST_d(\Delta)} |\HH_{d-1}(\Delta_T)|^2
           ~=~ \frac{|\HH_{d-2}(\Delta_U)|^2}{|\HH_{d-2}(\Delta)|^2} \tau_d(\Delta),
  \end{align*}
which is equivalent to the desired formula.
\end{proof}

\begin{remark}
Suppose that $\HH_{d-2}(\Delta)=0$ (for example, if
$\Delta$ is Cohen-Macaulay).
Then the two versions of Theorem~\ref{thm:SMTT} assert that
  $$\tau_d = \frac{\pi_d}{\tau_{d-1}} = \frac{\det L_U}{|\HH_{d-2}(\Delta_U)|^2},$$
from which it is easy to recognize the
two different versions of the classical Matrix-Tree Theorem, Theorem~\ref{CMTT}.
(A graph is Cohen-Macaulay as a simplicial complex if and only
if it is connected.)
Moreover, the recurrence $\tau_d = \pi_d/\tau_{d-1}$ leads to
an expression for $\tau_d$ as an alternating product of eigenvalues:
  \begin{equation} \label{reidemeister}
  \tau_d = \frac{\pi_d \pi_{d-2} \cdots }{\pi_{d-1} \pi_{d-3} \cdots} = \prod_{k=0}^d \pi_k^{(-1)^{d-k}}.
  \end{equation}
This formula is reminiscent of the Reidemeister torsion of a chain complex or CW-complex
(although $\tau_d$ is of course not a topological invariant); see, e.g., \cite{Turaev}.
Furthermore, \eqref{reidemeister} is in practice an efficient way to calculate $\tau_d$.
\end{remark}

\begin{example}
\label{ex:altproduct}
For the equatorial bipyramid $B$, we have
  $$\pi_0(B)=5,\qquad \pi_1(B)=5\cdot 5\cdot 5\cdot 3=375,\qquad \pi_2(B) = 5\cdot 5\cdot 5\cdot 3\cdot 3 = 1125.$$
These numbers can be checked by computation, and also follow from the Duval-Reiner formula
for Laplacian eigenvalues of a shifted complex.  Applying the
alternating product formula \eqref{reidemeister} yields
  $$\tau_0(B)=5,\qquad \tau_1(B) = 375/5 = 75,\qquad \tau_2(B) = \frac{1125\cdot 5}{375} = 15.$$
Indeed, $\tau_0(B)$ is the number of vertices.  
Cayley's formula implies that deleting any one edge~$e$ from $K_n$ yields a graph with $(n-2)n^{n-3}$
spanning trees (because $e$ itself belongs to $(n-1)/\binom{n}{2}$ of the spanning trees of $K_n$), and the 1-skeleton $B_{(1)}$
is such a graph with $n=5$, so $\tau_1(B)=75$.  Finally, we have seen in Example~\ref{ex:shifted235}
that $\tau_2(B)=15$.  
\end{example}

\section{Weighted enumeration of simplicial spanning trees}
\label{WSMTT-section}

We can obtain much finer enumerative information 
by labeling the facets of a complex with indeterminates, so that
the invariant $\tau_k$ becomes a generating function for
its SST's.

Let $\Delta^d$ be an APC simplicial complex,
and let $\bd=\bd_{\Delta,d}$.  Introduce an indeterminate $x_F$
for each facet $F$ of maximum dimension, and let $X_F=x_F^2$.
For every $T\subseteq\Delta_d$, let $x_T = \prod_{F\in T} x_F$ and let $X_T=x_T^2$.
To construct the \emph{weighted boundary matrix} $\wbd$ from $\bd$,
multiply each column of $\wbd$ by $x_F$, where $F$ is the facet of $\Delta$
corresponding to that column.  The \emph{weighted coboundary} $\wcbd$ is the
transpose of $\wbd$.  
We can now define weighted versions of Laplacians, the various submatrices of
the boundary and coboundary matrices used in Section~\ref{SMTT-section}, and the
invariants $\pi_k$ and $\tau_k$.  We will notate each weighted invariant by placing
a hat over the symbol for the corresponding unweighted quantity.  Thus
$\hat\pi_k$ is the product of the nonzero eigenvalues of $\Lwud_{\Delta,k-1}$,
and
  $$\hat \tau_k = \hat \tau_k(\Delta) = \sum_{\Upsilon\in\SST_k(\Delta)} |\HH_{k-1}(\Upsilon)|^2 X_\Upsilon.$$
To recover any unweighted quantity from its weighted analogue, set $x_F=1$ for all $F\in\Delta_d$.

\begin{proposition} \label{weighted-tools}
Let $T\subset\Delta_d$ and $S\subset\Delta_{d-1}$, with $|T|=|S|=f_d-\betti_d$.
Then $\det\hat\bd_{S,T} = x_T \det \bd_{S,T}$ is nonzero
if and only if $\Delta_T\in\SST_d(\Delta)$ and $\Delta_{\bar S}\in\SST_{d-1}(\Delta)$.  In that case,
  \begin{equation} \label{weighted-detD}
  \pm\det \hat \bd_{S,T} =
  \frac{|\HH_{d-1}(\Delta_T)| \cdot |\HH_{d-2}(\Delta_{\bar S})|}{ |\HH_{d-2}(\Delta_T)|} x_T  = 
  \frac{|\HH_{d-1}(\Delta_T)| \cdot |\HH_{d-2}(\Delta_{\bar S})|}{ |\HH_{d-2}(\Delta)|} x_T.
  \end{equation}
\end{proposition}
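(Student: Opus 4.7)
The plan is to reduce this weighted statement directly to its unweighted counterparts, Propositions~\ref{nonsingular-criterion} and~\ref{detD-formula}, by observing that the weighting affects $\bd_{S,T}$ only through column scaling. Specifically, $\hat\bd$ is obtained from $\bd$ by multiplying the column indexed by each facet $F \in \Delta_d$ by the indeterminate $x_F$, so passing to the square submatrix $\hat\bd_{S,T}$, the $T$-columns are scaled by $x_F$ for $F \in T$. Pulling these scalars out of the determinant yields $\det\hat\bd_{S,T} = \bigl(\prod_{F\in T} x_F\bigr)\det\bd_{S,T} = x_T \det\bd_{S,T}$, which establishes the first asserted equality.

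Because the indeterminates $x_F$ are nonzero, $\det\hat\bd_{S,T}$ vanishes precisely when $\det\bd_{S,T}$ does, so the nonvanishing criterion is inherited verbatim from Proposition~\ref{nonsingular-criterion}: $\det\hat\bd_{S,T}\neq 0$ iff $\Delta_T \in \SST_d(\Delta)$ and $\Delta_{\bar S} \in \SST_{d-1}(\Delta)$. Assuming nonvanishing, I would then substitute the formula for $|\det\bd_{S,T}|$ supplied by Proposition~\ref{detD-formula}, obtaining
\[
  |\det\hat\bd_{S,T}| = |x_T|\cdot|\det\bd_{S,T}| = \frac{|\HH_{d-1}(\Delta_T)|\cdot|\HH_{d-2}(\Delta_{\bar S})|}{|\HH_{d-2}(\Delta_T)|}\,x_T,
\]
which is the first of the two forms in \eqref{weighted-detD}. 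The sign $\pm$ in the statement absorbs the sign of $\det\bd_{S,T}$ and any sign convention on $x_T$, so I do not need to track it.

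For the second form, I would invoke the identity $|\HH_{d-2}(\Delta_T)| = |\HH_{d-2}(\Delta)|$, which already appears in the conclusion of Proposition~\ref{detD-formula} and follows simply because $\Delta_T$ and $\Delta$ share the same $(d-1)$-skeleton $\Delta_{(d-1)}$, and $\HH_{d-2}$ depends only on the $(d-1)$-skeleton (the $(d-2)$-boundary map and its kernel are entirely determined by faces of dimension at most $d-1$). Substituting this identity into the first form yields the second and completes the argument. Honestly, the only conceivable obstacle here is notational bookkeeping: confirming that the column-scaling interpretation is compatible with the sign conventions built into $\bd$ and with the restricted-matrix notation $\bd_{S,T}$, but this is immediate from the definition of $\hat\bd$ given at the start of the section.
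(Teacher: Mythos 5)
Your proposal is correct and follows essentially the same route as the paper, which likewise deduces the nonvanishing criterion from Proposition~\ref{nonsingular-criterion} and the determinant formula from Proposition~\ref{detD-formula}, with the column-scaling observation $\det\hat\bd_{S,T}=x_T\det\bd_{S,T}$ doing the (easy) reduction. You have simply spelled out the details the paper leaves implicit.
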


\begin{proof}
The first claim follows from Proposition~\ref{nonsingular-criterion}, and the second
from Proposition~\ref{detD-formula}.
\end{proof}

It is now straightforward to adapt the proofs of both parts of
Theorem~\ref{thm:SMTT} to the weighted setting.  For convenience,
we restate the result.  Let $\hat L=\Lwud_{\Delta,d-1}$.

\setcounter{savesection}{\value{section}}
\setcounter{section}{\value{intro-section-counter}}
\setcounter{savetheorem}{\value{theorem}}
\setcounter{theorem}{\value{WSMTT-counter}}
\begin{theorem}[{\bf Weighted Simplicial Matrix-Tree Theorem}]
Let $\Delta^d$ be an APC simplicial complex.  Then:
\begin{enumerate}
\item We have
  $$\hat\pi_d(\Delta) = \frac{\hat \tau_d(\Delta) \tau_{d-1}(\Delta)}{|\HH_{d-2}(\Delta)|^2}.$$
\item Let $U$ be the set of facets of a $(d-1)$-SST of $\Delta$, and let $\hat L_U$ be the reduced Laplacian obtained by
deleting the rows and columns of $\hat L$ corresponding to $U$.  Then
  $$\hat \tau_d(\Delta) = \frac{|\HH_{d-2}(\Delta)|^2}{|\HH_{d-2}(\Delta_U)|^2} \det \hat L_U.$$
\end{enumerate}
\end{theorem}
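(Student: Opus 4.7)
The plan is to adapt the unweighted proof essentially verbatim, using Proposition~\ref{weighted-tools} as the weighted replacement for Propositions~\ref{nonsingular-criterion} and~\ref{detD-formula}. The key observation driving everything is that $\det \hat\bd_{S,T} = x_T \det \bd_{S,T}$, so under Binet-Cauchy each squared determinant picks up a factor of $X_T = x_T^2$. This is exactly what turns $\tau_d(\Delta)$ into $\hat\tau_d(\Delta)$, while leaving the other $T$-independent sum $\tau_{d-1}(\Delta)$ unweighted (which explains the ``no missing hats'' footnote).

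For part~(1), I would first check that $\rank \hat L = \rank L = f_d - \betti_d$, so that $\hat\pi_d$ is the coefficient of $y^{f_{d-1}-f_d+\betti_d}$ in the characteristic polynomial of $\hat L$. This rank claim follows because a Binet-Cauchy expansion of any $|S| \times |S|$ principal minor of $\hat L$ of size exceeding $f_d - \betti_d$ vanishes automatically (the $\det \bd_{S,T}$ factors all vanish for size reasons), and Proposition~\ref{weighted-tools} guarantees nonvanishing minors at the correct size whenever $\Delta$ has a $d$-SST and a $(d-1)$-SST. Then I would write
\[
\hat\pi_d = \sum_{\substack{S\subset\Delta_{d-1}\\ |S|=f_d-\betti_d}} \det \hat L_U
= \sum_{S}\sum_{T} X_T\,(\det \bd_{S,T})^2
\]
with $U = \Delta_{d-1}\sm S$ in each summand, using $\hat L = \hat\bd\hat\cbd$ and $\hat\cbd = \hat\bd^\top$ in the second equality. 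Restricting to nonzero terms via Proposition~\ref{weighted-tools}, substituting the explicit formula for $|\det \bd_{S,T}|^2$, and swapping the order of summation then separates the double sum into a product of $\hat\tau_d(\Delta) = \sum_T X_T |\HH_{d-1}(\Delta_T)|^2$ and $\tau_{d-1}(\Delta) = \sum_S |\HH_{d-2}(\Delta_{\bar S})|^2$, divided by $|\HH_{d-2}(\Delta)|^2$, exactly as in the unweighted proof.

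For part~(2), the plan is simpler: fix $U$ to be the facets of a $(d-1)$-SST, and let $S = \Delta_{d-1} \sm U$. By Lemma~\ref{right-size}, $|S| = f_d - \betti_d$, so Binet-Cauchy yields
\[
\det \hat L_U = \sum_{T:\,|T|=|S|} X_T\,(\det \bd_{S,T})^2.
\]
Now Proposition~\ref{weighted-tools} isolates the $T$ with $\Delta_T \in \SST_d(\Delta)$ (since $\Delta_{\bar S} = \Delta_U$ is already a $(d-1)$-SST), and substituting the determinant formula gives $\det \hat L_U = \bigl(|\HH_{d-2}(\Delta_U)|^2/|\HH_{d-2}(\Delta)|^2\bigr)\,\hat\tau_d(\Delta)$, which rearranges to the desired identity.

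The whole argument is a bookkeeping exercise rather than a new idea, and I do not anticipate any genuine obstacle beyond verifying that the rank of $\hat L$ has not inadvertently increased on passage to the polynomial ring. Once that is confirmed, the weighted proofs are transparent echoes of their unweighted counterparts.
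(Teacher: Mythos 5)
Your proposal is correct and follows essentially the same route as the paper: the paper likewise proves both parts by rerunning the unweighted arguments of Theorem~\ref{thm:SMTT} verbatim, with Proposition~\ref{weighted-tools} (i.e., $\det\hat\bd_{S,T}=x_T\det\bd_{S,T}$ together with the nonsingularity criterion and the homology formula) substituted for Propositions~\ref{nonsingular-criterion} and~\ref{detD-formula}, so that each Binet--Cauchy term acquires the factor $X_T$ and the sum over $T$ becomes $\hat\tau_d$ while the sum over $S$ stays $\tau_{d-1}$. Your explicit check that $\rank\hat L=f_d-\betti_d$ is a sound (and slightly more careful) justification of a step the paper leaves implicit.
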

\setcounter{section}{\value{savesection}}
\setcounter{theorem}{\value{savetheorem}}

\begin{proof}
For assertion~(1), we use a weighted version of the argument of part~(1) of Theorem~\ref{thm:SMTT}.
By the Binet-Cauchy formula and Proposition~\ref{weighted-tools}, we have
  \begin{align*}
  \hat\pi_d
    &=  \sum_{S\subset\Delta_{d-1}} \sum_{\substack{T\subset\Delta_d\\ |T|=|S|}}\!\!(\det\wcbd_{T,S})(\det\wbd_{S,T})
    = \sum_S \sum_T (\det\wbd_{S,T})^2\\\\
    &= \sum_{T:\Delta_T\in\SST_d(\Delta)} \sum_{S:\Delta_{\bar{S}}\in\SST_{d-1}(\Delta)} (\det \wbd_{S,T})^2\\
    &= \sum_{T:\Delta_T\in\SST_d(\Delta)} \sum_{S:\Delta_{\bar{S}}\in\SST_{d-1}(\Delta)}
       \left(\frac{|\HH_{d-1}(\Delta_T)|\cdot|\HH_{d-2}(\Delta_{\bar{S}})|}{|\HH_{d-2}(\Delta)|}\right)^2 X_T
    = \frac{\hat \tau_d(\Delta) \tau_{d-1}(\Delta)}{|\HH_{d-2}(\Delta)|^2}.
  \end{align*}

The proof of assertion~(2) of the theorem is identical to that of part~(2) of Theorem~\ref{thm:SMTT}, using
Proposition~\ref{weighted-tools} instead of Proposition~\ref{detD-formula}.
\end{proof}

\begin{example}
\label{ex:weightedsst235}
We return to the equatorial bipyramid $B$ of Example~\ref{ex:bipyramid}.
Weight each facet $F = \{i,j,k\}$ by the monomial $x_F = x_ix_jx_k$.
Let $U = \{12, 13, 14, 15\}$ be the facets of a 1-SST of $B_{(1)}$.
Then the reduced Laplacian $\hat{L}_U$ is 
{\small
\begin{displaymath}
\left(
\begin{array}{ccccc}
x_2x_3(x_1+x_4+x_5) & -x_2 x_3 x_5    & x_2 x_3 x_4     & x_2 x_3 x_5     & -x_2 x_3 x_4\\
-x_2 x_3 x_4        & x_2x_5(x_1+x_3) & 0               & -x_2 x_3 x_5    & 0\\
 x_2 x_3 x_4        & 0               & x_3x_4(x_1+x_2) & 0               & -x_2 x_3 x_4\\
 x_2 x_3 x_5        & -x_2 x_3 x_5    & 0               & x_3x_5(x_1+x_2) & 0\\
-x_2 x_3 x_4        & 0               & -x_2 x_3 x_4    & 0               &  x_2x_4(x_1+x_3)
\end{array}
\right)
\end{displaymath}
}
and the generating function for 2-SST's by their degree sequences is
  $$\hat\tau_2(B) = \det \hat{L}_S =
    \sum_{\Upsilon\in\SST(B)} \prod_{i\in[5]} x_i^{\deg_B(i)} =
    x_1^3 x_2^3x_3^3x_4^2x_5^2(x_1 + x_2 + x_3)(x_1 + x_2 + x_3 + x_4 + x_5)$$
where $\deg_B(i)$ means the number of facets of~$B$ containing vertex~$i$.  Setting
$x_i=1$ for every $i$ recovers the unweighted equality $\tau_2(B)=15$ (see
Examples~\ref{ex:shifted235} and~\ref{ex:altproduct}).
\end{example}

\section{Shifted complexes}
\label{shifted-section}
\subsection{General definitions}

In the next several sections of the paper, we apply the tools just developed to the
important class of
\emph{shifted complexes}.  We begin by reviewing some standard facts about shifted complexes
and shifted families; for more details, see, e.g.,~\cite{Kalai_Shift}.

Let $k$ be an integer.  A \emph{$k$-set} is a set of integers of cardinality~$k$. A \emph{$k$-family}
is a set of $k$-sets (for example, the set of $(k-1)$-dimensional faces of a simplicial complex).
The \emph{componentwise partial order} $\cpleq$ on $k$-sets of integers is defined as follows:
if $A=\{a_1<a_2<\cdots<a_{k}\}$ and $B=\{b_1<b_2<\cdots<b_{k}\}$, then 
$A \cpleq B$ if $a_j \leq b_j$ for all $j$.  A $k$-family $\family$ is 
\emph{shifted} if $B \in \family$ and $A \cpleq B$ together imply $A \in \family$.
Equivalently, $\family$ is shifted if it is an order ideal with respect to the
componentwise partial order.  A simplicial complex $\Sigma$ is \emph{shifted} if 
$\Sigma_i$ is shifted for all $i$.  Accordingly, we may specify a shifted complex by the
list of its facets that are maximal with respect to $\cpleq$,
writing $\Sigma=\shgen{F_1,\dots,F_n}$.  For example, the bipyramid of Example~\ref{ex:bipyramid}
is the shifted complex $\shgen{235}$.  We will not lose any generality by assuming
that the vertex set for every shifted complex we encounter is an integer interval
$[p,q] = \{p,p+1,\ldots,q\}$; in particular, we will use the symbol~$p$ throughout
for the vertex with the smallest index.

The \emph{deletion} and \emph{link} of $\Sigma$ with respect to~$p$ are
defined to be the subcomplexes
  \begin{align*}
  \Delta &= \del_p \Sigma = \{F\sm\{p\}\colon F\in\Sigma\},\\
  \Lambda &= \link_p \Sigma = \{F\colon p\not\in F,\;F\cup\{p\}\in\Sigma\}.
  \end{align*}
It is easy to see that the deletion and link of a shifted complex on $[p,q]$
are themselves shifted complexes\footnote{%
    This is also true for the deletion and link with respect to any vertex, not just $p$, but then the resulting vertex
    set is no longer a set of consecutive integers.  Since we will not have any need to take the deletion and link with
    respect to any vertex other than $p$, we won't worry about that, and instead enjoy the resulting simplicity of
    specifying the new minimal vertex of the deletion and link.}
on $[p+1,q]$.

A complex $\Sigma$ on vertex set $V$ is called a \emph{near-cone with apex~$p$} if
it has the following property: if $F\in\del_p\Sigma$ and $v\in F$, then
$F\sm \{v\}\in\link_{p}\Sigma$ (equivalently, $F\sm \{v\} \cup \{p\} \in\Sigma$).
It is easy to see that a shifted complex on $[p,q]$ is a near-cone
with apex $p$.  Bj\"orner and Kalai~\cite[Theorem 4.3]{BK} showed that the Betti numbers
of a shifted complex $\Sigma$ (indeed, of a near-cone) with initial vertex $p$ are given by
\begin{equation}\label{beta-shifted}
\betti_i(\Sigma) = \abs{\{F \in \Sigma_i\st p \not\in F,\ F \dju \{p\} \not\in \Sigma\}}.
\end{equation}

\subsection{The combinatorial fine weighting}

Let $\{x_{i,j}\}$ be a set of indeterminates, indexed by integers $i,j$. Let $\fld$ be the field of
rational functions in the $x_{i,j}$ with coefficients in $\Cc$ (or in any other field
of characteristic zero).
Since these indeterminates will often appear squared, we set $X_{i,j}=x_{i,j}^2$.
The \emph{combinatorial fine weighting} assigns to 
a multiset of vertices $S=\{i_1\leq i_2\leq\dots\leq i_m\}$
the monomials
  \begin{equation} \label{x-monomial}
  x_S = x_{1,i_1} x_{2,i_2} \cdots x_{m,i_m} \quad\text{and}\quad
  X_S = X_{1,i_1} X_{2,i_2} \cdots X_{m,i_m}.
  \end{equation}
Our goal is to describe the generating function
  $$\hat \tau_d(\Sigma) = \sum_{\Upsilon\in\SST(\Sigma)} |\HH_{d-1}(\Sigma,\Zz)|^2 X_\Upsilon$$
of a shifted complex $\Sigma$, where, for each simplicial spanning tree $\Upsilon$, the monomial
  $$X_\Upsilon = \prod_{\text{facets } F\in \Upsilon} X_F$$
records both the number of facets of~$\Upsilon$ containing each vertex of~$\Sigma$,
as well as the order in which the vertices appear in facets.

Define the ``raising operator'' $\Up$\;
by $\Up x_{i,j}=x_{i+1,j}$ for $i\leq d$ and $\Up x_{d+1,j}=0$.
We extend $\Up\;$ linearly and multiplicatively to an operator on all of $\fld$.
The raising operator can also be applied to a $\fld$-linear operator $f$ by
the rule
  \begin{equation} \label{Up-operator}
  (\Up f)(V) = \Up(f(\Up^{-1}(V)))
  \end{equation}
for any vector $V$ over $\fld$.
The $a^{th}$ iterate of $\Up$\; is denoted $\Up^a$.

The following identities will be useful.
Let $\tilde S = S\cup \{p\}$, where $\cup$ denotes the union as multisets,
so that the multiplicity of $p$ in $\tilde S$ is one more than its multiplicity
in $S$.  Then, for all integers $a,j$,
  \begin{subequations}
  \begin{equation} \label{X-ident}
  \Up^a x_{1,p}\,\cdot \Up^{a+1}x_{S\cup j}
  ~=~ \Up^a\big(x_{1,p}\,\cdot\Up x_{S\cup j}\big)
  ~=~ \Up^a x_{\tilde S\cup j}
  \end{equation}
and
  \begin{equation} \label{tilde:monom}
  \frac{\Up^a x_{\tilde S}}{\Up^{a+1} x_S}
  ~=~ x_{a+1,p}
  ~=~ \Up^a x_{1,p}.
  \end{equation}
  \end{subequations}
The same identities hold if $x$ is replaced with $X$.

Now, define the \emph{combinatorially finely weighted simplicial boundary map} 
of $\Sigma$
as the homomorphism $\wbd=\wbd_{\Sigma,i}:C_i(\Sigma)\to C_{i-1}(\Sigma)$ which
acts on generators $[F]$ (for $F\in\Sigma_i$) by
  \begin{equation} \label{fine-boundary}
  \wbd[F] = \sum_{j\in F} \sign(j,F) \Up^{d-i}x_F\; [F\sm j].
  \end{equation}
Here we have set $\sign(v,F)=(-1)^{j+1}$
if $v$ is the $j^{th}$ smallest vertex of $F$, and $\sign(v, F)=0$ if $v\not\in F$.
Similarly define the \emph{finely weighted simplicial coboundary map}
$\wcbd=\wcbd_{ F,i+1}:C_i(\Sigma)\to C_{i+1}(\Sigma)$ by
  \begin{equation} \label{fine-coboundary}
  \wcbd[F] = \sum_{j\in V\sm F} \sign(j, F\cup j) \Up^{d-i-1}x_{F\cup j}\; [F\cup j].
  \end{equation}

These maps do \emph{not} make the chain groups of $\Sigma$ into an algebraic chain 
complex, because $\wbd\wbd$ and $\wcbd\wcbd$ do not vanish in general.  (We will
fix this problem in Section~\ref{alg-fine-weighting-section}.)
On the other hand, they
have combinatorial significance, because we will be able to apply part~(2) of 
Theorem~\ref{thm:WSMTT} to the \emph{finely weighted up-down Laplacian} 
$\Lwud=\wbd_d\wcbd_d$.  This Laplacian may be regarded as a matrix
whose rows and columns are indexed by $\Sigma_{d-1}$.
It is not hard to check that for each $F,G\in\Sigma_{d-1}$,
the corresponding entry of $\Lwud$ is 
\begin{equation}\label{Lwud-entries}
(\Lwud)_{FG} = 
  \begin{cases}
    \sign(j,H)\; \sign(i,H)\; X_H
       &\text{ if $H = F\cup j = G\cup i\in\Sigma$,}\\
    \sum\limits_{j\colon F\cup j\in\Sigma} X_{F\cup j}
       &\text{ if $F=G$,} \\
    \quad0 &\text{ otherwise.}
  \end{cases}
\end{equation}

Let $U$ be the simplicial spanning tree of $\Sigma_{(d-1)}$
consisting of all ridges containing vertex~$p$.  (This subcomplex is an
SST because it has a complete $(d-2)$-skeleton and is a cone over $p$,
hence contractible.)
Let $\Lwud_U$ be the reduced Laplacian obtained
from $\Lwud$ by deleting the corresponding rows and columns, so that the remaining rows
and columns are indexed by the facets of $\Lambda = \link_p\Sigma$.  Then
part~(2) of Theorem~\ref{thm:WSMTT} asserts that
$\hat \tau_d(\Sigma) = \det \Lwud_U$.

Let $N=N(\Sigma)$ be the matrix obtained from $\Lwud_U$ by dividing each row $F$ by
$\Up x_F$ and dividing each column $G$ by $\Up x_G$.
The $(F,G)$ entry of $N$ is thus
\begin{equation}\label{N-entries}
N_{FG} = 
  \begin{cases}
    \sign(j,H)\; \sign(i,H)\; \dfrac{X_H}{\Up x_F\; \Up x_G}
       &\text{ if $H = F\cup j = G\cup i\in\Sigma$,}\\\\
    \sum\limits_{j\colon F\cup j\in\Sigma} \dfrac{X_{F\cup j}}{\Up X_F}
       &\text{ if $F=G$,}\\\\
    \quad0 &\text{ otherwise.}
  \end{cases}
\end{equation}
Moreover,
\begin{equation}\label{h-detN}
 \hat{\tau}_d(\Sigma) =   
  \det \Lwud_U = \left(\prod_{F\in\Lambda_{d-1}} \Up X_F\right) \det N.
\end{equation}

We will shortly see (Lemma~\ref{reduce-to-evals}) that $N$ is almost identical to the (full) Laplacian of
the deletion $\Delta=\del_p\Sigma$.

\subsection{The algebraic fine weighting} \label{alg-fine-weighting-section}

The combinatorial fine weighting just defined is awkward to work with directly,
because the simplicial boundary and coboundary maps
\eqref{fine-boundary} and \eqref{fine-coboundary} do not fit together into
an algebraic chain complex.  Therefore, we introduce a new weighting by Laurent
monomials, the \emph{algebraic fine weighting}, that
does give the structure of a
chain complex, behaves well with respect to cones and near-cones, 
and is easy to translate into the combinatorial fine
weighting.

\begin{definition}
Let $\Delta^d$ be a simplicial complex on vertices $V\subset\Nn$.
The \emph{algebraic finely weighted simplicial boundary map} of $\Delta$
is the homomorphism $\Abd_{\Delta,i}:C_i(\Delta)\to C_{i-1}(\Delta)$ given by
  \begin{equation} \label{vertex-boundary}
  \Abd_{\Delta,i}[ F] = \sum_{j\in F} \sign(j, F)
    \frac{\Up^{d-i}(x_F)}{\Up^{d-i+1}(x_{ F\sm j})} [ F\sm j]
  \end{equation}
and similarly the \emph{algebraic finely weighted simplicial coboundary map}
$\Acbd_{\Delta,i+1}:C_i(\Delta)\to C_{i+1}(\Delta)$ is given by
  \begin{equation} \label{vertex-coboundary}
  \Acbd_{\Delta,i+1}[ F] = \sum_{j\in V\sm F} \sign(j, F\cup j)
    \frac{\Up^{d-i-1}(x_{ F\cup j})}{\Up^{d-i}(x_F)} [ F\cup j].
  \end{equation}
\end{definition}
We will sometimes drop one or both subscripts when no confusion can arise.
By the formula~\eqref{Up-operator}, we can apply the raising operator $\Up$
to $\Abd$ and $\Acbd$ by applying it to each matrix entry.  That is,
for $[F]\in C_i(\Delta)$ and $a\in\Nn$, we have
  \begin{align}
  \Up^a\Abd_{\Delta,i}[ F] &= \sum_{j\in F} \sign(j, F)
    \frac{\Up^{d-i+a}(x_F)}{\Up^{d-i+a+1}(x_{ F\sm j})} [ F\sm j],
  \label{raised-vertex-boundary}\\
  \Up^a\Acbd_{\Delta,i}[ F] &= \sum_{j\in V\sm F} \sign(j, F\cup j)
    \frac{\Up^{d-i+a-1}(x_{ F\cup j})}{\Up^{d-i+a}(x_F)} [ F\cup j].
  \label{raised-vertex-coboundary}
  \end{align}

\begin{lemma} \label{boundary-map-lemma}
Let $a\in\Nn$.  Then
$\Up^a\Abd \, \circ\Up^a\Abd=0$ and $\Up^a\Acbd\circ\Up^a\Acbd=0$.
That is, the algebraic finely weighted boundary and coboundary operators induce chain complexes
  $$\cdots \rightarrow C_{i+1}(\Delta) \xrightarrow{\Abd_{\Delta,i+1}} C_i(\Delta)
    \xrightarrow{\Abd_{\Delta,i}} C_{i-1}(\Delta) \rightarrow \cdots$$
  $$\cdots \leftarrow  C_{i+1}(\Delta) \xleftarrow{\Acbd_{\Delta,i+1}} C_i(\Delta)
    \xleftarrow{\Acbd_{\Delta,i}} C_{i-1}(\Delta) \leftarrow \cdots$$
\end{lemma}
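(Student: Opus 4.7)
The plan is to verify this by direct computation, exploiting the fact that the weight factors telescope in exactly the right way, after which the standard simplicial sign cancellation finishes the job.

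First I would apply $\Up^a \Abd_{\Delta,i-1} \circ \Up^a \Abd_{\Delta,i}$ to a basis element $[F]$ with $\dim F = i$ using formula~\eqref{raised-vertex-boundary}. This gives a double sum
\[
\Up^a \Abd \,\Up^a \Abd \,[F]
= \sum_{j\in F}\sum_{k\in F\sm j} \sign(j,F)\sign(k,F\sm j)\,
\frac{\Up^{d-i+a}(x_F)}{\Up^{d-i+a+1}(x_{F\sm j})}\cdot
\frac{\Up^{d-i+a+1}(x_{F\sm j})}{\Up^{d-i+a+2}(x_{F\sm\{j,k\}})}\,[F\sm\{j,k\}].
\]
The key observation is that the denominator of the first weight factor cancels the numerator of the second, leaving a coefficient $\Up^{d-i+a}(x_F)/\Up^{d-i+a+2}(x_{F\sm\{j,k\}})$ that depends only on the unordered pair $\{j,k\}$, not on which element was removed first. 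The weighting has been stripped of all asymmetry.

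Next I would apply the standard sign identity $\sign(j,F)\sign(k,F\sm j) = -\sign(k,F)\sign(j,F\sm k)$ for $j \neq k \in F$ (the same identity that makes $\bd\bd=0$ in ordinary simplicial homology). Pairing up the terms corresponding to $(j,k)$ and $(k,j)$, their weight factors are equal while their signs are opposite, so every pair cancels and the whole sum vanishes. This proves $\Up^a\Abd\circ\Up^a\Abd=0$.

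The argument for $\Up^a\Acbd\circ\Up^a\Acbd=0$ is entirely parallel: using~\eqref{raised-vertex-coboundary}, composition with itself produces a double sum over $j,k\in V\sm F$ in which the weight $\Up^{d-i+a-1}(x_{F\cup j})$ appears once in a numerator and once in a denominator, telescoping to a coefficient $\Up^{d-i+a-2}(x_{F\cup\{j,k\}})/\Up^{d-i+a}(x_F)$ that is symmetric in $j$ and $k$; the antisymmetry of $\sign(j,F\cup j)\sign(k,F\cup\{j,k\})$ in $j$ and $k$ then cancels everything pairwise. Honestly there is no real obstacle: the only thing to check is the telescoping of the Laurent-monomial weights, which is immediate from the way the exponents on $\Up$ are set up in~\eqref{vertex-boundary} and~\eqref{vertex-coboundary}, and indeed this clean cancellation is precisely the motivation behind choosing those exponents (it is what the combinatorial fine weighting fails to satisfy).
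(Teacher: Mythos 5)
Your proof is correct and follows essentially the same route as the paper: expand $\Up^a\Abd\circ\Up^a\Abd$ on a basis element, observe that the Laurent-monomial weights telescope to a coefficient symmetric in the unordered pair $\{j,k\}$, and invoke the standard simplicial sign identity to cancel terms pairwise. The only cosmetic difference is that the paper disposes of the coboundary statement by noting that $\Acbd$ is the transpose of $\Abd$, whereas you run the (equally valid) parallel computation directly.
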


\begin{proof}
Since the matrices that represent the maps $\Abd$ and $\Acbd$ are mutual transposes,
it suffices to prove the first assertion.  For $ F\in\Delta_i$, we have by
\eqref{raised-vertex-boundary}
  \begin{align*}
  &\Up^a\Abd_{\Delta,i-1}(\Up^a\Abd_{\Delta,i}([F]))
  ~=~ \Up^a\Abd_{\Delta,i-1} \left( \sum_{j\in F} \sign(j, F)
    \frac{\Up^{d-i+a}(x_F)}{\Up^{d-i+a+1}(x_{F\sm j})} [F\sm j] \right)\\
  &=~ \sum_{j\in F} \sign(j, F) \frac{\Up^{d-i+a}(x_F)}{\Up^{d-i+a+1}(x_{F\sm j})}
    \cdot\Up^a\Abd_{\Delta,i-1}([F\sm j])\\
  &=~ \sum_{j\in F} \sign(j, F)
       \frac{\Up^{d-i+a}(x_F)}{\Up^{d-i+a+1}(x_{F\sm j})}
       \sum_{k\in F\sm j} \sign(k, F\sm j)
         \frac{\Up^{d-i+a+1}(x_{F\sm j})}{\Up^{d-i+a+2}(x_{F\sm j\sm k})} [F\sm j\sm k]\\
  &=~ \sum_{j\in F} \sum_{k\in F\sm j} \sign(j, F) \sign(k, F\sm j)
       \frac{\Up^{d-i+a}(x_F)}{\Up^{d-i+a+2}(x_{F\sm j\sm k})}
          [F\sm j\sm k]\\
  &=~ \sum_{\substack{j,k\in F\\j \neq k}}
        \Big(\sign(j,F) \sign(k,F\sm j)
        + \sign(k,F) \sign(j,F\sm k) \Big)
       \frac{\Up^{d-i+a}(x_F)}{\Up^{d-i+a+2}(x_{F\sm j\sm k})}
          [F\sm j\sm k]
  \end{align*}
and it is a standard fact of simplicial homology theory that the parenthesized expression is zero.
\end{proof}

Define the algebraic finely weighted up-down, down-up, and total Laplacians by
  $$
  \LLud_{\Delta,i} = \Abd_{\Delta,i+1}\Acbd_{\Delta,i+1}, \qquad
  \LLdu_{\Delta,i} = \Acbd_{\Delta,i}\Abd_{\Delta,i}, \qquad
  \LLtot_{\Delta,i} = \LLud_{\Delta,i} + \LLdu_{\Delta,i}.
  $$
Each of these is a linear endomorphism of $C_i(\Delta)$,
represented by a symmetric matrix, hence diagonalizable.
Let $\Sud_i(\Delta)$, $\Sdu_i(\Delta)$,
and $\Stot_i(\Delta)$ denote the spectra (multisets of eigenvalues)
of $\LLud_{\Delta,i}$, $\LLdu_{\Delta,i}$, and $\LLtot_{\Delta,i}$
respectively.  We will use the abbreviations $\LLud$, $\LLdu$, $\LLtot$,
$\Sud$, $\Sdu$, $\Stot$ when no confusion can arise.

If we regard $\LLud_{\Delta,d-1}$ as a matrix with rows and columns indexed by $\Delta_{d-1}$, 
then it is not hard to check that its $(F,G)$ entry is
\begin{equation}\label{LLud-entries}
(\LLud_{\Delta,d-1})_{FG} = 
  \begin{cases}
    \sign(j,H)\ \sign(i,H)\ \dfrac{X_H}{\Up x_F\;\Up x_G}
       &\text{ if $H = F\cup j = G\cup i\in\Delta$,}\\\\
    \sum\limits_{j\colon F\cup j \in\Delta} \dfrac{X_{F\cup j}}{\Up X_F}
       &\text{ if $F=G$,}\\\\
    \quad0 &\text{ otherwise.}
  \end{cases}
\end{equation}

This matrix is almost identical to the matrix $N(\Sigma)$ defined in \eqref{N-entries}
when $\Delta = \del_p \Sigma$, as we now explain.

\begin{lemma}\label{reduce-to-evals}
Let $\Sigma^d$ be a pure shifted complex with initial vertex $p$, and let
$\Lambda = \link_p \Sigma$ and $\Delta = \del_p \Sigma$.  Then
$$
 \hat{\tau}_d(\Sigma)
  = \left(\prod_{F\in\Lambda_{d-1}} \Up X_F\right) 
     \prod_{\lambda \in \Sud_{\Delta,d-1}} (X_{1,p}+\lambda).
$$
\end{lemma}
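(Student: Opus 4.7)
The strategy is to identify $N$ with a shift of the algebraic finely weighted up-down Laplacian of $\Delta$. Specifically, I will show
$$N \;=\; \LLud_{\Delta,d-1} \;+\; X_{1,p}\,I.$$
Granted this, the lemma follows immediately from \eqref{h-detN}, since $\det N = \prod_{\lambda\in\Sud_{\Delta,d-1}}(X_{1,p}+\lambda)$, and these eigenvalues together with the prefactor $\prod_{F\in\Lambda_{d-1}}\Up X_F$ give exactly the stated expression for $\hat\tau_d(\Sigma)$.

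The first step is to verify that $N$ and $\LLud_{\Delta,d-1}$ are indexed by the same set, namely $\Lambda_{d-1}=\Delta_{d-1}$. The inclusion $\Lambda\subseteq\Delta$ is immediate from the definitions. For the reverse inclusion on $(d-1)$-faces, given $F\in\Delta_{d-1}$, purity of $\Sigma$ provides a $d$-facet $F'\supseteq F$ of $\Sigma$; because $p$ is the smallest vertex, a position-by-position check shows $F\cup\{p\}\cpleq F'$ in the componentwise order, and shiftedness then yields $F\cup\{p\}\in\Sigma$, i.e.\ $F\in\Lambda_{d-1}$.

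With the rows and columns identified, the next step is to compare entries using formulas \eqref{N-entries} and \eqref{LLud-entries}. For off-diagonal entries, if $F\neq G$ lie in $\Lambda_{d-1}$ and $H=F\cup j=G\cup i$, then $p\not\in H$ (else $p=j\in G$ or $p=i\in F$, both contradictions), so the conditions $H\in\Sigma$ and $H\in\Delta$ coincide, and the signs and weight factors on both sides match exactly. For the diagonal, the sum defining $N_{FF}$ ranges over all $j$ with $F\cup j\in\Sigma$, whereas that for $(\LLud_{\Delta,d-1})_{FF}$ ranges only over $j\neq p$ with $F\cup j\in\Delta$; the two sums differ by the single term $X_{F\cup p}/\Up X_F$ (which is present because $F\in\Lambda_{d-1}$ guarantees $F\cup\{p\}\in\Sigma$), and identity~\eqref{tilde:monom} applied with $S=F$, $a=0$, and $x$ replaced by $X$ evaluates this term to exactly $X_{1,p}$.

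The main obstacle is the index-set identification $\Lambda_{d-1}=\Delta_{d-1}$ of Step~1: this is where both the purity and the shiftedness hypotheses on $\Sigma$ are used essentially, and it is what makes the entrywise comparison possible at all. Once the two matrices are known to sit on the same rows and columns, the remaining computation is essentially a matter of reading off the definitions and invoking the identity \eqref{tilde:monom}.
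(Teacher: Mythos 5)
Your proof is correct and follows essentially the same route as the paper: identify the index sets $\Lambda_{d-1}=\Delta_{d-1}$ (using purity and shiftedness), match off-diagonal entries, show the diagonals differ by exactly $X_{1,p}$ via \eqref{tilde:monom}, conclude $N=\LLud_{\Delta,d-1}+X_{1,p}I$, and finish with \eqref{h-detN}. Your argument for the index-set identification is in fact more detailed than the paper's, which simply asserts it.
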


\begin{proof}
First, note that $\LL=\LLud_{\Delta,d-1}$ is indexed by the faces of $\Delta_{d-1}$ 
and $N=N(\Sigma)$
is indexed by the faces of $\Lambda_{d-1}$.  These indexing sets coincide
because $\Sigma$ is shifted and pure of dimension~$d$.

Second, we show that the off-diagonal entries (the first cases in
\eqref{N-entries} and \eqref{LLud-entries}) coincide.  Suppose that
$F,G$ are distinct faces in $\Delta_{d-1}=\Lambda_{d-1}$,
and that $H=F\cup i=G\cup j$.
Suppose that $i\neq j$ and $H=F\cup i=G\cup j$.  We must show that
$H\in\Delta$ if and only if $H\in\Sigma$.  The ``only if''
direction is immediate because $\Delta\subset\Sigma$.
On the other hand, $H=F\cup G$ and no element of $\Delta_{d-1}$ contains~$p$.
Therefore, if $H\in\Sigma$, then $H\in\Delta$, as desired.

Third, we compare the entries on the main diagonals of $\LL$
and $N$.  Their only difference is that the summand with $j=p$ occurs in
the second case of \eqref{N-entries}, but not in \eqref{LLud-entries}.  Hence
  \begin{equation}\label{N-diagonal}
  N_{FF} ~=~ \LL_{FF} + \frac{X_{F\cup p}}{\Up X_F}
  ~=~ \LL_{FF} + X_{1,p}
  \end{equation}
by~\eqref{tilde:monom}.  Therefore $N = \LL + X_{1,p}I$,
where $I$ is an identity matrix of size $f_{d-1}(\Delta)$, and
  $$\det N = \chi(-\LL,X_{1,p}) 
           = \prod_{\lambda \in \Sud_{\Delta,d-1}} (X_{1,p}+\lambda),$$
where $\chi$ denotes the characteristic polynomial of $-\LL$ in the variable $X_{1,p}$.
The lemma now follows from equation~\eqref{h-detN}.
\end{proof}

The goal of the next two sections is to compute $\LLud_{\Delta,d-1}$.

\section{Cones and near-cones}
\label{cone-section}

A shifted complex is an iterated near-cone, so we want to describe
the Laplacian eigenvalues of a near-cone in terms of its base.
Before we do so, we must consider the case of a cone.
Proposition~\ref{cone-spectrum} provides the desired recurrence
for cones, and Proposition~\ref{near-cone-recurrence} for near-cones.

\subsection{Boundary and coboundary operators of cones}
\label{section:cone-boundary}

Let $\Gamma$ be the simplicial complex with the single vertex 1,
and let $\Delta^d$ be any complex on $V=[2,n]$.
For a face $F\in\Delta$, write $\tilde F=1\cup F$.
The corresponding \emph{cone} is
  $$\Sigma = 1*\Delta = \Gamma*\Delta = \{ F,\tilde F \st  F\in\Delta\}.$$

We will make use of the identification
$$C_i(\Sigma) ~\isom~ (C_{-1}(\Gamma)\tensor C_i(\Delta)) \dsum (C_0(\Gamma)\tensor C_{i-1}(\Delta)).$$

By \eqref{raised-vertex-boundary} and \eqref{raised-vertex-coboundary},
the (raised) boundary and coboundary maps on $\Gamma$ are given explicitly by
  \begin{align*}
  \Up^a \Abd_{\Gamma}[1]  &= x_{a+1,1} [\0], & \Up^a \Acbd_{\Gamma}[1]  &=0,\\
  \Up^a \Abd_{\Gamma}[\0] &= 0,              & \Up^a \Acbd_{\Gamma}[\0] &= x_{a+1,1} [1].
  \end{align*}

Next, we give explicit formulas for the maps $\Abd_{\Sigma,i}$ and $\Acbd_{\Sigma,i+1}$. 
How these maps act on a face of $\Sigma$
depends on whether it is of the form $ F$, for $ F\in\Delta_i$, or
$\tilde F$, for $ F\in\Delta_{i-1}$.  Note that in any case $\sign(1,\tilde F)=1$,
and that for all $v\in F$ we have $\sign(v, F) = -\sign(v,\tilde F)$.  Therefore,

\begin{subequations}
\begin{align}
\Abd_{\Sigma,i} & ([\0]\tensor[F])
  = \sum_{j\in F} \sign(j, F)
    \frac{\Up^{d-i+1}(x_F)}{\Up^{d-i+2}(x_{ F\sm j})}
    [\0]\tensor[ F\sm j]\notag\\
 &= \Up\left(\sum_{j\in F} \sign(j, F)
    \frac{\Up^{d-i}(x_F)}{\Up^{d-i+1}(x_{ F\sm j})}
    [\0]\tensor[ F\sm j]\right)\notag\\
 &= (\id\tensor\Up\Abd_{\Delta,i})([\0]\tensor[ F]),
\label{del-no-apex}
\end{align}

\begin{align}
\Abd_{\Sigma,i} & ([1]\tensor[ F])
  = \sign(1,\tilde F)
    \frac{\Up^{d-i+1}(x_{\tilde F})}{\Up^{d-i+2}(x_F)}
    [\0]\tensor[ F]
  + \sum_{j\in F}
      \sign(j,\tilde F)
      \frac{\Up^{d-i+1}(x_{\tilde F})}{\Up^{d-i+2}(x_{\tilde F\sm j})}
      [1]\tensor[ F\sm j]\notag\\
 &= x_{d-i+2,1} [\0]\tensor[ F]
  - \sum_{j\in F}
      \sign(j, F)
      \frac{x_{d-i+2,1}}{x_{d-i+3,1}}
      \frac{\Up^{d-i+2}(x_F)}{\Up^{d-i+3}(x_{ F\sm j})}
      [1]\tensor[ F\sm j]\notag\\
 &= x_{d-i+2,1} [\0]\tensor[ F]
  - \frac{x_{d-i+2,1}}{x_{d-i+3,1}} \Up\left(
      \sum_{j\in F} \sign(j, F)
      \frac{\Up^{d-i+1}(x_F)}{\Up^{d-i+2}(x_{ F\sm j})}
      [1]\tensor[ F\sm j]\right) \notag\\
 &= \left( \Up^{d-i+1}\Abd_{\Gamma}\tensor\id
        - \frac{x_{d-i+2,1}}{x_{d-i+3,1}}\id\tensor\Up\Abd_{\Delta,i-1} \right) ([1]\tensor[ F]),
\label{del-apex}
\end{align}

\begin{align}
\Acbd_{\Sigma,i+1} & ([\0]\tensor[ F])
  = \sign(1,\tilde F)
    \frac{\Up^{d-i}(x_{\tilde F})}{\Up^{d-i+1}(x_F)}
    [1]\tensor[ F]
  + \sum_{j\in V\sm F}\!\!
    \sign(j, F\cup j)
    \frac{\Up^{d-i}(x_{ F\cup j})}{\Up^{d-i+1}(x_F)}
    [\0]\tensor[ F\cup j]\notag\\
 &= x_{d-i+1,1} [1]\tensor[ F]
  + \Up\left( \sum_{j\in V\sm F}
    \sign(j, F\cup j)
    \frac{\Up^{d-i-1}(x_{ F\cup j})}{\Up^{d-i}(x_F)}
    [\0]\tensor[ F\cup j] \right)\notag\\
 &= \Big( \Up^{d-i}\Acbd_{\Gamma}\tensor\id + \id\tensor\Up\Acbd_{\Delta,i+1} \Big)
    ([\0]\tensor[ F]),
\label{codel-no-apex}
\end{align}

\begin{align}
\Acbd_{\Sigma,i+1} & ([1]\tensor[ F])
  = \sum_{j\in V\sm F}
    \sign(j,\tilde F\cup j)
    \frac{\Up^{d-i}(x_{\tilde F\cup j})}{\Up^{d-i+1}(x_{\tilde F\cup j})}
    [1]\tensor[ F\cup j] \notag\\
 &= -\sum_{j\in V\sm F}
    \sign(j, F\cup j)
    \frac{\Up^{d-i}(x_{1,1})}{\Up^{d-i+1}(x_{1,1})}
    \frac{\Up^{d-i+1}(x_{ F\cup j})}{\Up^{d-i+2}(x_F)}
    [1]\tensor[ F\cup j] \notag\\
 &= - \frac{x_{d-i+1,1}}{x_{d-i+2,1}} \cdot \Up\left(
    \sum_{j\in V\sm F} \sign(j, F\cup j)
    \frac{\Up^{d-i}(x_{ F\cup j})}{\Up^{d-i+1}(x_F)}
    [1]\tensor[ F\cup j]\right) \notag\\
 &= \left( -\frac{x_{d-i+1,1}}{x_{d-i+2,1}}\id\tensor \Up\Acbd_{\Delta,i} \right)
    ([1]\tensor[ F]).
\label{codel-apex}
\end{align}
\end{subequations}

\subsection{Eigenvectors of cones}

In order to describe the Laplacian eigenvalues and eigenvectors of $1*\Delta$ in terms
of those of $\Delta$, we first need some basic facts about the Laplacians
of an arbitrary simplicial complex.  The following proposition does not depend on fine
weighting, and works with any weighted boundary map that satisfies $\partial^2=0$.

\begin{proposition} \label{Laplacian-facts}
Let $\Omega^d$ be a simplicial complex, and let $-1\leq i\leq d$.
Let $\LLud_i = \LLud_{\Omega,i}$, $\LLdu_i = \LLdu_{\Omega,i}$, 
$\Abd_i = \Abd_{\Omega,i}$, and $\Acbd_i = \Acbd_{\Omega,i}$.

\begin{enumerate}

\item\label{decompose-chain}
 The chain group $C_i(\Omega)$ decomposes as a direct sum
  \begin{equation} \label{eigenspace-decomposition}
  C_i(\Omega) = C^{\rm ud}_i(\Omega) \dsum C^{\rm du}_i(\Omega) \dsum C^0_i(\Omega)
  \end{equation}
where
  \begin{itemize}
    \item $C^{\rm ud}_i(\Omega)$ has a basis consisting of eigenvectors of
      $\LLud_i$ whose eigenvalues are all nonzero, and on which
      $\LLdu_i$ acts by zero;
    \item $C^{\rm du}_i(\Omega)$ has a basis consisting of eigenvectors of
      $\LLdu_i$ whose eigenvalues are all nonzero, and on which
      $\LLud_i$ acts by zero; and
    \item $\LLud_i$ and $\LLdu_i$ both act by zero on $C^0_i(\Omega)$.
  \end{itemize}

\item\label{equal-kernels}
 $\ker(\LLud_i)=\ker(\Acbd_{i+1})$ and $\ker(\LLdu_i)=\ker(\Abd_i)$.

\item\label{homology-count}
$\dim C^0_i(\Omega)=\betti_i(\Omega)$, the $i^{th}$ Betti number of $\Omega$.

\item\label{spectra-identities}
 Each of the spectra $\Sud_i$, $\Sdu_i$, $\Stot_i$ of $\Omega$ has cardinality $f_i(\Omega)$ as a multiset, and
  \begin{subequations}
  \begin{align}
  \Sdu_i &\zeq \Sud_{i-1}, \ \text{ and} \label{UD-DU}\\
  \Stot_i &\zeq \Sud_i \cup \Sdu_i \zeq \Sud_i \cup \Sud_{i-1}. \label{total-spectrum}
  \end{align}
  \end{subequations}

\end{enumerate}
\end{proposition}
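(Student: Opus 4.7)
The plan is to build on the framework of discrete Hodge theory, adapted to weighted boundary maps. I would prove part~(\ref{equal-kernels}) first and use it to establish a Hodge-type decomposition, from which parts~(\ref{decompose-chain}), (\ref{homology-count}) and~(\ref{spectra-identities}) all follow.

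For part~(\ref{equal-kernels}), the inclusion $\ker(\Acbd_{i+1})\subseteq\ker(\LLud_i)$ is immediate from $\LLud_i=\Abd_{i+1}\Acbd_{i+1}$. Conversely, if $\LLud_i v=0$, then since $\Abd_{i+1}$ and $\Acbd_{i+1}$ are matrix transposes,
\[
\langle\Acbd_{i+1}v,\,\Acbd_{i+1}v\rangle \;=\; v^T\Abd_{i+1}\Acbd_{i+1}v \;=\; v^T\LLud_i v \;=\; 0,
\]
forcing $\Acbd_{i+1}v=0$ provided the standard bilinear form is anisotropic. The field subtlety (the working field $\fld$ is not a priori formally real) I would handle by specializing the $x_{i,j}$ to positive real values, where the argument is genuinely valid, and noting that equality of kernel dimensions is a polynomial condition on minors that propagates from a Zariski-dense specialization back to $\fld$. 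The proof of $\ker(\LLdu_i)=\ker(\Abd_i)$ is identical.

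For part~(\ref{decompose-chain}), set
\[
C^{\rm ud}_i(\Omega)=\im(\Abd_{i+1}),\qquad
C^{\rm du}_i(\Omega)=\im(\Acbd_i),\qquad
C^0_i(\Omega)=\ker(\Abd_i)\cap\ker(\Acbd_{i+1}).
\]
These subspaces are pairwise orthogonal: $\langle\Abd_{i+1}w,\Acbd_i z\rangle=\langle w,\Acbd_{i+1}\Acbd_i z\rangle=0$ by Lemma~\ref{boundary-map-lemma}, and analogous identities handle orthogonality with $C^0_i$. Their dimensions sum to $f_i(\Omega)$ by rank--nullity applied to $\Abd_i$ and $\Abd_{i+1}$, using $\im(\Abd_{i+1})\subseteq\ker(\Abd_i)$. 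From part~(\ref{equal-kernels}), $\LLud_i$ vanishes on $C^{\rm du}_i$ and $C^0_i$ (both lie in $\ker(\Acbd_{i+1})$), and dually $\LLdu_i$ vanishes on $C^{\rm ud}_i$ and $C^0_i$. Moreover $\LLud_i$ preserves $C^{\rm ud}_i$ and has trivial kernel there, because $\ker(\Acbd_{i+1})\cap\im(\Abd_{i+1})=0$ by orthogonality; the symmetry of $\LLud_i$ then yields a basis of eigenvectors with nonzero eigenvalues on $C^{\rm ud}_i$, and symmetrically for $\LLdu_i$ on $C^{\rm du}_i$.

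Parts~(\ref{homology-count}) and~(\ref{spectra-identities}) then follow quickly. Restricting the decomposition to $\ker(\Abd_i)$ gives $\ker(\Abd_i)=\im(\Abd_{i+1})\oplus C^0_i$, so $\HH_i(\Omega)\cong C^0_i$ and $\dim C^0_i=\betti_i(\Omega)$. The cardinality claim comes from diagonalizability of symmetric matrices. The identity $\Sdu_i\zeq\Sud_{i-1}$ is the classical fact that $AB$ and $BA$ have identical nonzero spectra with multiplicity, applied to $A=\Acbd_i$ and $B=\Abd_i$ via the bijection $v\mapsto\Abd_i v$ on nonzero eigenspaces. Finally, since $\LLtot_i$ acts as $\LLud_i$ on $C^{\rm ud}_i$, as $\LLdu_i$ on $C^{\rm du}_i$, and as zero on $C^0_i$, we obtain $\Stot_i\zeq\Sud_i\cup\Sdu_i$. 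The main obstacle throughout is the anisotropy argument in part~(\ref{equal-kernels}), which the specialization strategy above is designed to circumvent.
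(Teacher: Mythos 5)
Your argument is correct, and in fact the subspaces you construct coincide with the paper's, but the route is organized differently. The paper proves part~(\ref{decompose-chain}) first and very quickly: since $\LLud_i\LLdu_i=\Abd\Acbd\Acbd\Abd=0$ and $\LLdu_i\LLud_i=0$ by Lemma~\ref{boundary-map-lemma}, it takes $C^{\rm ud}_i$ and $C^{\rm du}_i$ to be the spans of the eigenvectors of $\LLud_i$ and $\LLdu_i$ with nonzero eigenvalues (diagonalizability having been asserted when the operators were defined), so directness comes essentially for free; part~(\ref{equal-kernels}) is then proved exactly as in your first step, by the inclusion $\ker(\LLud_i)\supseteq\ker(\Acbd_{i+1})$ plus the rank fact $\rank(MM^T)=\rank M$; part~(\ref{homology-count}) is the count $\betti_i=(f_i-\rank\Abd_i)-\rank\Abd_{i+1}=\dim C_i-\dim C^{\rm ud}_i-\dim C^{\rm du}_i$; part~(\ref{spectra-identities}) is handled as in your last paragraph. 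You instead prove~(\ref{equal-kernels}) first and realize~(\ref{decompose-chain}) as the explicit Hodge decomposition $\im(\Abd_{i+1})\dsum\im(\Acbd_i)\dsum(\ker\Abd_i\cap\ker\Acbd_{i+1})$; this buys a slightly more self-contained derivation of~(\ref{homology-count}) (you read off $\ker\Abd_i=\im(\Abd_{i+1})\dsum C^0_i$ directly), at the cost of having to justify orthogonality/directness, which over $\fld$ again needs the anisotropy you flag---though once~(\ref{equal-kernels}) is known these steps reduce to rank statements (e.g.\ if $v=\Abd_{i+1}w\in\ker\Acbd_{i+1}$ then $\LLdu_{i+1}w=0$, whence $v=0$ by~(\ref{equal-kernels})), so your framework closes up. The one place you are more careful than the paper is the field issue: the paper quotes $\rank(MM^T)=\rank M$ without comment, which is false over $\Cc$ in general; your specialization to positive reals does repair it (rank can only drop under specialization, while a generic specialization preserves $\rank M$), and an even shorter fix is to note that all matrix entries lie in the formally real subfield $\Qq(x_{i,j})$, over which the usual sum-of-squares argument applies and ranks are unchanged by extension to $\fld$ --- the same remark also legitimizes the ``symmetric hence diagonalizable'' step that both you and the paper use.
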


\begin{proof}
For assertion (1), note that $\LLud_i\LLdu_i=\Abd\Acbd\Acbd\Abd=0$ and
$\LLdu_i\LLud_i=\Acbd\Abd\Abd\Acbd=0$.  Thus we may simply take $C^{\rm ud}_i(\Omega)$
and $C^{\rm du}_i(\Omega)$ to be the spans of the eigenvectors of $\LLud_i$
and $\LLdu_i$ with nonzero eigenvalues.

For assertion (2),
the operators $\LLud_i$ and $\Acbd_{i+1}$ act on the same space,
namely $C_i(\Omega)$, and they have the same rank (this is just the linear
algebra fact that $\rank(MM^T)=\rank M$ for any matrix $M$).  Therefore,
their kernels have the same dimension.  Clearly $\ker(\LLud_i)\supseteq\ker(\Acbd_{i+1})$,
so we must have equality.  The same argument shows that $\ker(\LLdu_i)=\ker(\Abd_i)$.

Assertion (3) follows from the calculation
\begin{align*}
\betti_i(\Omega) ~=~ \dim\HH_i(\Omega,\Qq)
  &= (\dim\,\ker\Abd_i)-(\dim\,\im\Abd_{i+1})\\
  &= (f_i-\rank\Abd_i)-(\rank\Abd_{i+1})\\
  &= \dim C_i - \dim C^{\rm ud}_i - \dim C^{\rm du}_i ~=~ \dim C^0_i.
\end{align*}

For assertion (4), first note that $\dim C_i(\Omega)=f_i(\Omega)$, and that
the matrix representing each spectrum is symmetric, hence diagonalizable.
The identity \eqref{UD-DU} is a standard fact in linear algebra,
and \eqref{total-spectrum} is a consequence of the decomposition
\eqref{eigenspace-decomposition}.
\end{proof}

\begin{proposition} \label{cone-spectrum}
As in Section~\ref{section:cone-boundary}, let $\Delta^d$ be a simplicial
complex on vertex set~$[2,n]$, and let $\Sigma=1*\Delta$.  Then
\begin{equation} \label{cone-updown-spectrum}
\begin{aligned}
\Sud_i(\Sigma)
  &\zeq \left\{ X_{d-i+1,1}+\Up\lambda \st
    \lambda\in\Sud_i(\Delta),\ \lambda\neq 0 \right\}\\
  &\qquad\cup \left\{ X_{d-i+1,1}+\frac{X_{d-i+1,1}}{X_{d-i+2,1}} \Up\mu \st
    \mu\in\Sud_{i-1}(\Delta),\ \mu\neq 0 \right\}\\
  &\qquad\cup \left\{ X_{d-i+1,1} \right\}^{\betti_i(\Delta)}.
\end{aligned}
\end{equation}
\end{proposition}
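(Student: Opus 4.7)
The plan is to express $\LLud_{\Sigma,i}$ as a $2\times 2$ block matrix with respect to the decomposition $C_i(\Sigma)=([\0]\tensor C_i(\Delta))\dsum([1]\tensor C_{i-1}(\Delta))$ of Section~\ref{section:cone-boundary}, then diagonalize it using the Hodge-type decomposition of Proposition~\ref{Laplacian-facts} applied to $\Delta$.  Combining equations \eqref{del-no-apex}--\eqref{codel-apex} exhibits $\Abd_{\Sigma,i+1}$ and $\Acbd_{\Sigma,i+1}$ as block matrices whose entries are scalar multiples of $\id$, $\Up\Abd_{\Delta,*}$, and $\Up\Acbd_{\Delta,*}$; multiplying them yields
\[
\LLud_{\Sigma,i} ~=~ \begin{pmatrix} \Up\LLud_{\Delta,i}+X_{d-i+1,1}I & -\dfrac{X_{d-i+1,1}}{x_{d-i+2,1}}\Up\Acbd_{\Delta,i} \\[6pt] -\dfrac{X_{d-i+1,1}}{x_{d-i+2,1}}\Up\Abd_{\Delta,i} & \dfrac{X_{d-i+1,1}}{X_{d-i+2,1}}\Up\LLud_{\Delta,i-1} \end{pmatrix}.
\]

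Using the decomposition $C_i(\Delta)=C^{\rm ud}_i\dsum C^{\rm du}_i\dsum C^0_i$ (and analogously for $C_{i-1}(\Delta)$) from Proposition~\ref{Laplacian-facts}(\ref{decompose-chain}), I would exhibit eigenvectors of $\LLud_{\Sigma,i}$ in four families.  First, for each nonzero $\lambda\in\Sud_i(\Delta)$ with eigenvector $v\in C^{\rm ud}_i(\Delta)$, the pair $(\Up v,0)$ is an eigenvector with eigenvalue $\Up\lambda+X_{d-i+1,1}$, because $v\in\ker\LLdu_{\Delta,i}=\ker\Abd_{\Delta,i}$ (by Proposition~\ref{Laplacian-facts}(\ref{equal-kernels})) kills the bottom-left block.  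Second, for each $v\in C^0_i(\Delta)$, the pair $(\Up v,0)$ is an eigenvector with eigenvalue $X_{d-i+1,1}$, yielding $\betti_i(\Delta)$ such eigenvalues.  Third, for each nonzero $\mu\in\Sud_{i-1}(\Delta)$ with eigenvector $w\in C^{\rm ud}_{i-1}(\Delta)$, set $P_0=\Acbd_{\Delta,i}w\in C^{\rm du}_i(\Delta)$; then the two-dimensional subspace spanned by $\xi_1=(\Up P_0,0)$ and $\xi_2=(0,\Up w)$ is invariant under $\LLud_{\Sigma,i}$, and I would compute that the characteristic polynomial of its $2\times 2$ restriction factors as $\nu\bigl(\nu-X_{d-i+1,1}-\tfrac{X_{d-i+1,1}}{X_{d-i+2,1}}\Up\mu\bigr)$.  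Fourth, for $w\in C^{\rm du}_{i-1}(\Delta)\dsum C^0_{i-1}(\Delta)$, the pair $(0,\Up w)$ is a $0$-eigenvector, since both $\Acbd_{\Delta,i}$ and $\LLud_{\Delta,i-1}$ vanish on these subspaces.

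To finish, I would verify that these eigenvectors span $C_i(\Sigma)$ by a dimension count, using the identity $f_i(\Sigma)=f_i(\Delta)+f_{i-1}(\Delta)$ together with the fact (from Proposition~\ref{Laplacian-facts}(\ref{spectra-identities}) and the isomorphism $\Acbd_{\Delta,i}\colon C^{\rm ud}_{i-1}(\Delta)\to C^{\rm du}_i(\Delta)$) that $\dim C^{\rm du}_i(\Delta)=\dim C^{\rm ud}_{i-1}(\Delta)$; then I would read off the nonzero eigenvalues to match \eqref{cone-updown-spectrum}, with the remaining zero eigenvalues absorbed by the equivalence $\zeq$.  The main technical obstacle is the third case: one must carefully track how $\Up$ interacts with $\Abd_{\Delta,i}$ and $\Acbd_{\Delta,i}$, using the identities $\Up\Acbd_{\Delta,i}(\Up w)=\Up P_0$ and $\Up\Abd_{\Delta,i}(\Up P_0)=(\Up\mu)(\Up w)$, in order to express $\LLud_{\Sigma,i}\xi_1$ and $\LLud_{\Sigma,i}\xi_2$ in the basis $\{\xi_1,\xi_2\}$ and compute the characteristic polynomial cleanly.
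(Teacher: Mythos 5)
Your proposal is correct and is essentially the paper's own argument: the paper likewise builds eigenvectors of $\LLud_{\Sigma,i}$ from the decomposition of Proposition~\ref{Laplacian-facts}, treating exactly your three nonzero families (your third family is parametrized by $w\in C^{\rm ud}_{i-1}(\Delta)$ pushed into $C^{\rm du}_i(\Delta)$ via $\Acbd_{\Delta,i}$, whereas the paper works directly with a $\LLdu_{\Delta,i}$-eigenvector $W\in C^{\rm du}_i(\Delta)$ and exhibits the eigenvector $fA+gB$ in the same two-dimensional invariant subspace whose $2\times2$ characteristic polynomial you compute). The only real difference is bookkeeping at the end: you close by exhibiting the zero-eigenvectors and a spanning/dimension count, while the paper instead counts the nonzero eigenvalues found in $\Sud_i(\Sigma)$ and $\Sdu_i(\Sigma)$ and invokes $\Stot_i\zeq\Sud_i\cup\Sdu_i$; both routes are fine and share the same (harmless) implicit assumptions about $\Up$ preserving nonvanishing and independence.
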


Here the symbol $\cup$ denotes multiset union, and the superscript in the last line
indicates multiplicity.

\begin{proof}
Throughout the proof, we abbreviate $\LLud_{\Sigma,i}$ by
$\mathbf{L}$.  All other Laplacians that arise will be specified precisely.

First, let $V\in C^{\rm ud}_i(\Delta)$ be an eigenvector of
$\LLud_{\Delta,i}$ with eigenvalue $\lambda\neq 0$.  Then
$\Up \LLud_{\Delta,i} (\Up V) = \Up\lambda \Up V$,
and, by Lemma~\ref{boundary-map-lemma},
\begin{equation} \label{dirty-trick}
\Up\Abd_{\Delta,i}(\Up V)
  = \frac{1}{\Up\lambda} \Up\Abd_{\Delta,i} \big(\Up\Abd_{\Delta,i+1} (\Up\Acbd_{\Delta,i+1} (\Up V))\big)
  = 0.
\end{equation}

Using \eqref{del-no-apex}\dots\eqref{codel-apex} and \eqref{dirty-trick},
we calculate 
  \begin{align}
  \mathbf{L} \left([\0] \tensor \Up V\right)
    &= \Abd_{\Sigma,i} ( \Acbd_{\Sigma,i} ( [\0]\tensor \Up V ))\notag\\
    &= \Abd_{\Sigma,i} \left( \Up^{d-i}\Acbd_{\Gamma}[\0] \tensor \Up V 
      + [\0] \tensor \Up\Acbd_{\Delta,i+1}(\Up V) \right)\notag\\
    &= x_{d-i+1,1} \Abd_{\Sigma,i+1} \left( [1] \tensor \Up V \right)
      + \Abd_{\Sigma,i+1} \left( [\0] \tensor \Up\Acbd_{\Delta,i+1}(\Up V) \right)\notag\\
    &= x_{d-i+1,1} \left( \Up^{d-i}\Abd_{\Gamma} [1] \tensor \Up V
      - \frac{x_{d-i+1,1}}{x_{d-i+2,1}} [1]\tensor \Up\Abd_{\Delta,i}(\Up V) \right)\notag\\
    &\quad+ [\0] \tensor \Up\Abd_{\Delta,i+1}(\Up\Acbd_{\Delta,i+1}(\Up V))\notag\\
    &= (X_{d-i+1,1} + \Up\lambda) \left([\0] \tensor \Up V\right).
    \label{UD-eigenvector}
  \end{align}
  Therefore, $[\0] \tensor \Up V \in C_i(\Sigma)$
is an eigenvector of $\mathbf{L}$, with eigenvalue $X_{d-i+1,1} + \Up\lambda$.  
Notice that this eigenvalue cannot be zero:
since $1\not\in\Delta$, no Laplacian eigenvalue of $\Delta$ can
possibly equal $-X_{d-i,1}$.

Second, let $W$ be an eigenvector in $C^{\rm du}_i(\Delta)$ with nonzero
eigenvalue~$\mu$.  That is,
$\LLdu_{\Delta_i}(W) = \Acbd_{\Delta,i}(\Abd_{\Delta,i}(W)) = \mu W$,
and $\Acbd_{\Delta,i+1}(W)=0$
by a computation similar to \eqref{dirty-trick}.
Define
  $$A=[\0]\tensor\Up W,\qquad B=[1]\tensor \Up\Abd_{\Delta,i}(\Up W).$$
Note that these are both nonzero elements of $C_i(\Sigma)$.  Then
\begin{subequations}
\begin{align}
  \mathbf{L}(A) &= \Abd_{\Sigma,i+1} \left( \Acbd_{\Sigma,i+1} ( [\0]\tensor\Up W) \right)\notag\\
     &= \Abd_{\Sigma,i+1} \left( \Up^{d-i}\Acbd_{\Gamma}[\0]\tensor\Up W +
        [\0]\tensor\Up\Acbd_{\Delta,i+1}(\Up W) \right)\notag\\
     &= x_{d-i+1,1} \Abd_{\Sigma,i+1} \left([1]\tensor\Up W\right) \label{clubs}\\
     &= x_{d-i+1,1} \left( \Up^{d-i}\Abd_{\Gamma}[1]\tensor\Up W -
	\frac{x_{d-i+1,1}}{x_{d-i+2,1}}[1]\tensor\Up\Abd_{\Delta,i}(\Up W)\right)\notag\\
     &= X_{d-i+1,1} A - \left(\frac{X_{d-i+1,1}}{x_{d-i+2,1}}\right) B\label{spades}
\end{align}
\end{subequations}
and
\begin{align*}
 \mathbf{L}(B) &= \Abd_{\Sigma,i+1} \left( \Acbd_{\Sigma,i+1} ([1]\tensor \Up\Abd_{\Delta,i}(\Up W)) \right)\\
     &= \Abd_{\Sigma,i+1} \left( -\frac{x_{d-i+1,1}}{x_{d-i+2,1}}[1]\tensor
          \Up\Acbd_{\Delta,i}( \Up\Abd_{\Delta,i}( \Up W)) \right)\\
     &= -\frac{x_{d-i+1,1}}{x_{d-i+2,1}} \Up\mu \cdot
          \Abd_{\Sigma,i+1} \left( [1]\tensor \Up W \right)\\
     &= \left(-\frac{X_{d-i+1,1}}{x_{d-i+2,1}} \Up\mu\right) A +
        \left( \frac{X_{d-i+1,1}}{X_{d-i+2,1}} \Up\mu\right) B,
\end{align*}
where the last step follows by the equality of \eqref{clubs} and \eqref{spades}.
Letting
  $$f = X_{d-i+1,1}, \qquad g = -\frac{X_{d-i+1,1}}{x_{d-i+2,1}}, \qquad
    h = -\frac{\Up\mu}{x_{d-i+2,1}},$$
 the calculations above say that
  $$\mathbf{L}(A) = fA+gB, \qquad \mathbf{L}(B) = h(fA+gB),$$
which implies that
  $$\mathbf{L}(fA+gB) = f\mathbf{L}(A)+g\mathbf{L}(B) = f(fA+gB)+gh(fA+gB) = (f+gh)(fA+gB).$$
That is, $fA+gB$ is an eigenvector of $\mathbf{L}$ with eigenvalue
  $$f+gh = X_{d-i+1,1} + \frac{X_{d-i+1,1}}{X_{d-i+2,1}}\Up\mu.$$
As before, this quantity cannot be zero because $\Delta$ does not contain
the vertex~1.

Third, let $Z\in C_i^0(\Delta)$; we will show that
$[\0]\tensor \Up Z$ is an eigenvector of $\mathbf{L}$ with eigenvalue $X_{d-i+1,1}$.
Indeed, by (\ref{equal-kernels}) of Proposition~\ref{Laplacian-facts}, we have
$\Abd_{\Delta,i}(Z) = \Acbd_{\Delta,i+1}(Z) = 0$, so that
  $$
  \Up\Abd_{\Delta,i}(\Up Z) = \Up\Acbd_{\Delta,i+1}(\Up Z) = 0.
  $$
Therefore
  \begin{align}
  \mathbf{L}([\0]\tensor \Up Z)
  &= \Abd_{\Sigma,i+1}(\Acbd_{\Sigma,i+1}([\0]\tensor \Up Z))\notag\\
  &= \Abd_{\Sigma,i+1}\left( (\Up^{d-i}\Acbd_{\Gamma}[\0]\tensor \Up Z)
    + [\0]\tensor \Up\Acbd_{\Delta,i+1}(\Up Z) \right)\notag\\
  &= x_{d-i+1,1} \Abd_{\Sigma,i+1} ([1]\tensor \Up Z)\notag\\
  &= x_{d-i+1,1}\cdot\left( \Up^{d-i}\Abd_{\Gamma}[1]\tensor\Up Z
     - \frac{x_{d-i+1,1}}{x_{d-i+2}}[1]\tensor\Up\Abd_{\Delta,i}(\Up Z) \right)\notag\\
  &= X_{d-i+1,1}([\0]\tensor\Up Z),
    \label{UD-nullvector}
  \end{align}
as desired.  By assertion~(\ref{homology-count}) of
Proposition~\ref{Laplacian-facts}, the multiplicity of this eigenvalue is
$\dim C_i^0(\Delta)=\betti_i(\Delta)$.

At this point, we have proven that \eqref{cone-updown-spectrum} is true if ``$\zeq$''
is replaced with ``$\supseteq$''.  On the other hand, we have accounted for
  $$\dim C^{\rm ud}_i(\Delta) + \dim C^{\rm du}_i(\Delta) + \dim C^0_i(\Delta) = \dim C_i(\Delta) = f_i(\Delta)$$
nonzero eigenvalues in $\Sud_i(\Sigma)$.  Since the preceding calculations
hold for all $i$, we also know $f_{i-1}(\Delta)$ nonzero eigenvalues
of $\LLud_{\Sigma,i-1}$ ($\zeq \LLdu_{\Sigma,i}$).
By \eqref{total-spectrum}, we have accounted for all
$f_i(\Delta)+f_{i-1}(\Delta)=f_i(\Sigma)=\dim C_i(\Sigma)$
eigenvalues of $\LLtot_{\Sigma,i}$.  So we have indeed found all
the nonzero eigenvalues of~$\mathbf{L}$.
\end{proof}

One can obtain explicit formulas for the spectra $\Sdu_i(\Sigma)$ and $\Stot_i(\Sigma)$
by applying \eqref{UD-DU} and \eqref{total-spectrum} to the formula
\eqref{cone-updown-spectrum}; we omit the details.

\subsection{Eigenvalues of near-cones}

The next step is to establish a recurrence (Proposition~\ref{near-cone-recurrence})
for the Laplacian eigenvalues of near-cones, in terms of the eigenvalues of
the deletion and the link of the apex.  Our method is based on that of
Lemma~5.3 of \cite{DR}.
By itself, Proposition~\ref{near-cone-recurrence} is not a proper recurrence,
in the sense that it computes the eigenvalues of a pure complex in terms
of complexes that are not necessarily pure.  Therefore, it cannot be applied recursively;
the proper recurrence for shifted complexes will have to wait until
Theorem~\ref{shifted-recurrence}.  Since we will be comparing complexes with
similar face sets but of different dimensions, we begin by describing how their
Laplacian spectra are related.  

\begin{lemma} \label{skeleton-Laplacian}
Let $\Sigma^d$ be a simplicial complex, and let $j<i\leq d$.  Then
  $\Sud_j(\Sigma) = \Up^{d-i} \Sud_j(\Sigma_{(i)}).$
\end{lemma}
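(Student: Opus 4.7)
The plan is to compare the algebraic finely weighted boundary (and coboundary) operators of $\Sigma$ and its skeleton $\Sigma_{(i)}$ directly, showing that they differ only by a uniform application of $\Up^{d-i}$, and then translate this into a relation on spectra.

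First I would observe that since $j < i \leq d$, we have $C_k(\Sigma) = C_k(\Sigma_{(i)})$ for all $k \leq i$, so the operators $\Abd_{\Sigma,j+1}$ and $\Abd_{\Sigma_{(i)},j+1}$ have the same source and target (and likewise for the coboundaries). Comparing the defining formula \eqref{vertex-boundary} applied with ambient dimension $d$ versus with ambient dimension $\dim\Sigma_{(i)}=i$ (both at a face $F\in\Sigma_{j+1}$), the coefficient $\Up^{d-j-1}(x_F)/\Up^{d-j}(x_{F\sm k})$ in $\Abd_{\Sigma,j+1}[F]$ equals $\Up^{d-i}$ applied to the coefficient $\Up^{i-j-1}(x_F)/\Up^{i-j}(x_{F\sm k})$ in $\Abd_{\Sigma_{(i)},j+1}[F]$. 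The identical calculation using \eqref{vertex-coboundary} handles the coboundary. Thus, as matrices,
\[
\Abd_{\Sigma,j+1} = \Up^{d-i}\bigl(\Abd_{\Sigma_{(i)},j+1}\bigr), \qquad
\Acbd_{\Sigma,j+1} = \Up^{d-i}\bigl(\Acbd_{\Sigma_{(i)},j+1}\bigr),
\]
where $\Up^{d-i}$ acts entrywise. (That the operator-level definition \eqref{Up-operator} agrees with the entrywise action on matrices is a quick check: since the basis vectors $[F]$ carry no $x$-variables, $\Up^{-1}[F]=[F]$, so $(\Up f)(V)$ has matrix with entries $\Up(m_{ij})$.)

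Next I would use multiplicativity. Since $\Up$ is a ring homomorphism, entrywise $\Up^a$ applied to a matrix product equals the product of the entrywise $\Up^a$ images. Hence
\[
\LLud_{\Sigma,j} = \Abd_{\Sigma,j+1}\Acbd_{\Sigma,j+1} = \Up^{d-i}\bigl(\Abd_{\Sigma_{(i)},j+1}\Acbd_{\Sigma_{(i)},j+1}\bigr) = \Up^{d-i}\bigl(\LLud_{\Sigma_{(i)},j}\bigr).
\]

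Finally I would pass from matrices to spectra. For any square matrix $M$ with entries in $\fld$, the characteristic polynomial of $\Up^a(M)$ is obtained by applying $\Up^a$ to the coefficients (but not to the indeterminate $y$) of $\chi_M(y)$: this is immediate from the permutation expansion of the determinant, because $\Up^a$ is multiplicative. Writing $\chi_M(y)=\prod_k(y-\lambda_k)$ therefore gives $\chi_{\Up^a M}(y)=\prod_k(y-\Up^a(\lambda_k))$, so the multiset of eigenvalues of $\Up^a(M)$ is $\Up^a$ applied entry-by-entry to the multiset of eigenvalues of $M$. Applying this to $a=d-i$ and $M=\LLud_{\Sigma_{(i)},j}$ yields $\Sud_j(\Sigma)=\Up^{d-i}\Sud_j(\Sigma_{(i)})$.

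The only subtle point, and the main thing to verify carefully, is that multiplicities are preserved—that is, that $\Up^{d-i}$ is injective on the subring where the eigenvalues of $\LLud_{\Sigma_{(i)},j}$ live, so that distinct eigenvalues remain distinct. This is immediate once one observes that every entry of $\LLud_{\Sigma_{(i)},j}$ is a rational function in the variables $x_{a,b}$ with $a\leq i+1$, and $\Up$ merely relabels such variables as $x_{a+1,b}$ with $a+1\leq i+2\leq d+1$; no variable gets killed, so $\Up^{d-i}$ restricts to a ring monomorphism on this subring, preserving multiplicities (including the multiplicity of $0$, since $\Up^{d-i}(0)=0$).
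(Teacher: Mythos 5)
Your proof is correct and follows essentially the same route as the paper, which simply observes that $\Sigma$ and $\Sigma_{(i)}$ have identical faces in dimensions $\leq i$ while their ambient dimensions differ by $d-i$, so the algebraic finely weighted boundary maps and Laplacians of $\Sigma$ are obtained from those of $\Sigma_{(i)}$ by applying $\Up^{d-i}$. Your write-up just supplies the details (entrywise comparison of \eqref{vertex-boundary}--\eqref{vertex-coboundary}, multiplicativity of $\Up$, passage to characteristic polynomials, and injectivity on the relevant variables) that the paper leaves implicit.
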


\begin{proof}
The complexes $\Sigma_{(i)}$ and $\Sigma$ have the same face sets for every
dimension $\leq i$, but $\dim\Sigma_{(i)} = \dim\Sigma - (d-i)$.  Therefore,
the algebraic finely weighted boundary maps and Laplacians of $\Sigma$ can be obtained
from those of $\Sigma_{(i)}$ by applying $\Up^{d-i}$, from which the lemma follows.
\end{proof}

Recall that the \emph{pure $i$-skeleton} of $\Sigma$ is the subcomplex
$\Sigma_{[i]}$ generated by the $i$-dimensional faces of $\Sigma$.

\begin{lemma}\label{pure-top-Sud}
Let $\Sigma^d$ be a simplicial complex.  Then
$
\Sud_{d-1}(\Sigma) \zeq \Sud_{d-1}(\Sigma_{[d]}).
$
\end{lemma}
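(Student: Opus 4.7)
The plan is to exhibit $\LLud_{\Sigma,d-1}$, viewed as a matrix with rows and columns indexed by $\Sigma_{d-1}$, as a block diagonal matrix whose two blocks are $\LLud_{\Sigma_{[d]},d-1}$ and a zero matrix. The extra eigenvalues contributed by the zero block are all $0$, and $\zeq$ ignores multiplicities of zero, so this immediately yields the claim.

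To set this up, I would decompose $C_{d-1}(\Sigma)$ as a direct sum $A \oplus B$, where $A$ is the free module on $(\Sigma_{[d]})_{d-1}$ and $B$ is the free module on the ``free'' $(d-1)$-faces, namely those $F \in \Sigma_{d-1}$ with $F \not\subseteq G$ for every $G \in \Sigma_d$. The key observation is that $\LLud_{\Sigma,d-1}$ annihilates $B$: for any free $(d-1)$-face $F$, there is no vertex $j$ with $F \cup j \in \Sigma_d$, so formula~\eqref{vertex-coboundary} gives $\Acbd_{\Sigma,d}([F]) = 0$, and hence $\LLud_{\Sigma,d-1}([F]) = \Abd_{\Sigma,d}(0) = 0$.

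Next I would check that $\LLud_{\Sigma,d-1}$ preserves $A$ and restricts there to $\LLud_{\Sigma_{[d]},d-1}$. This uses two facts: first, $\Sigma_d = (\Sigma_{[d]})_d$ by definition of the pure skeleton, so for $F \in (\Sigma_{[d]})_{d-1}$ the coboundary formula \eqref{vertex-coboundary} produces the same chain in $C_d(\Sigma)$ as in $C_d(\Sigma_{[d]})$; second, for every $G \in \Sigma_d$ and $j \in G$, the face $G \setminus j$ lies in $(\Sigma_{[d]})_{d-1}$, so the boundary formula \eqref{vertex-boundary} yields a chain supported in $A$. Combined, the composition $\Abd_{\Sigma,d} \Acbd_{\Sigma,d}$ restricted to $A$ coincides with $\Abd_{\Sigma_{[d]},d} \Acbd_{\Sigma_{[d]},d}$.

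Putting the two pieces together, the spectrum of $\LLud_{\Sigma,d-1}$ equals that of $\LLud_{\Sigma_{[d]},d-1}$ together with $|B|$ additional zero eigenvalues, which is precisely the statement $\Sud_{d-1}(\Sigma) \zeq \Sud_{d-1}(\Sigma_{[d]})$. There is no real obstacle here; the proof is essentially bookkeeping with the explicit boundary and coboundary formulas.
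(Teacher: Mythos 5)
Your proof is correct and follows essentially the same route as the paper's: the up-down Laplacian $\LLud_{\Sigma,d-1}$ acts by zero on the $(d-1)$-faces not contained in any $d$-face, and agrees entry-by-entry with $\LLud_{\Sigma_{[d]},d-1}$ on the faces of the pure skeleton (using $\Sigma_d=(\Sigma_{[d]})_d$ and equal dimensions), so the spectra differ only in zeros. The paper's proof is just a terser sketch of this same block-decomposition observation.
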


\begin{proof}
This result is proved in~\cite[Lemma 3.2]{Duval}, but we sketch the proof
here for completeness.  
First, observe that $\LLud_{d-1}$ depends only on
$(d-1)$- and $d$-dimensional faces.  
Letting $\Omega=\Sigma_{[d]}$, we have $\Omega_d=\Sigma_d$
and $\Omega_{d-1}\subseteq\Sigma_{d-1}$, and indeed
$\LLud_{\Sigma,d-1}[F] = \LLud_{\Omega,d-1}[F]$
for any $F\in\Omega_{d-1}$.  On the other hand,
$\Sigma_{d-1}\sm\Omega_{d-1}$ consists precisely of those
faces $G$ not contained in any $d$-dimensional faces of $\Sigma$.
But $\LLud_{\Sigma,d-1}$ acts by zero on any such $G$, and
the lemma follows immediately.
\end{proof}

\begin{corollary}\label{pure-Sud}
Let $\Sigma^d$ be a simplicial complex.  Then
$
\Sud_{i-1}(\Sigma) ~\zeq~ \Up^{d-i}\Sud_{i-1}(\Sigma_{[i]}).
$
\end{corollary}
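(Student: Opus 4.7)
The plan is to assemble the corollary from the two lemmas immediately preceding it. The strategy is a two-step reduction: first drop from $\Sigma$ down to its $i$-skeleton $\Sigma_{(i)}$, then pass from $\Sigma_{(i)}$ to its pure $i$-skeleton $\Sigma_{[i]}$.

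First I would apply Lemma \ref{skeleton-Laplacian} with $j = i-1 < i$ to obtain
\[
\Sud_{i-1}(\Sigma) \;=\; \Up^{d-i}\,\Sud_{i-1}(\Sigma_{(i)}).
\]
Next, since $\Sigma_{(i)}$ is a complex of dimension exactly $i$, Lemma \ref{pure-top-Sud} applies to it (with the role of $d$ played by $i$), yielding
\[
\Sud_{i-1}(\Sigma_{(i)}) \;\zeq\; \Sud_{i-1}\bigl((\Sigma_{(i)})_{[i]}\bigr).
\]

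The only remaining point is the identification $(\Sigma_{(i)})_{[i]} = \Sigma_{[i]}$. This holds because the $i$-dimensional faces of $\Sigma_{(i)}$ are exactly the $i$-dimensional faces of $\Sigma$, and the pure $i$-skeleton of a complex depends only on its $i$-dimensional faces; I would note this in one line. Combining the two displays and applying $\Up^{d-i}$ to both sides of the second gives the desired equivalence
\[
\Sud_{i-1}(\Sigma) \;\zeq\; \Up^{d-i}\,\Sud_{i-1}(\Sigma_{[i]}).
\]

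There is no serious obstacle here: the content is packaged in Lemmas \ref{skeleton-Laplacian} and \ref{pure-top-Sud}, and the corollary is essentially their composition. The only subtlety worth flagging is that Lemma \ref{pure-top-Sud} is stated for the top-dimensional Laplacian of a complex, so one must be careful to apply it to $\Sigma_{(i)}$ (whose top dimension is $i$) rather than directly to $\Sigma$.
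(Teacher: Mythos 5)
Your proof is correct and is essentially identical to the paper's: the paper likewise writes $\Sud_{i-1}(\Sigma) = \Up^{d-i}\Sud_{i-1}(\Sigma_{(i)}) \zeq \Up^{d-i}\Sud_{i-1}((\Sigma_{(i)})_{[i]}) = \Up^{d-i}\Sud_{i-1}(\Sigma_{[i]})$, citing Lemmas~\ref{skeleton-Laplacian} and~\ref{pure-top-Sud} and noting $(\Sigma_{(i)})_{[i]} = \Sigma_{[i]}$. Your flag about applying Lemma~\ref{pure-top-Sud} to $\Sigma_{(i)}$ (whose top dimension is $i$) rather than to $\Sigma$ is exactly the right care to take.
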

\begin{proof}
We have $\Sud_{i-1}(\Sigma) ~=~ \Up^{d-i}\Sud_{i-1}(\Sigma_{(i)}) 
~\zeq~ \Up^{d-i}\Sud_{i-1}((\Sigma_{(i)})_{[i]})
~=~ \Up^{d-i}\Sud_{i-1}(\Sigma_{[i]})$
by Lemmas~\ref{skeleton-Laplacian} and~\ref{pure-top-Sud}.
(Note that $(\Sigma_{(i)})_{[i]} = \Sigma_{[i]}$.)
\end{proof}

\begin{proposition} \label{near-cone-recurrence}
Let $\Sigma^d$ be a pure near-cone with apex $p$, and let 
$\Delta = \del_{p}\Sigma$ and $\Lambda = \link_{p}\Sigma$ be the deletion and link, 
respectively, of $\Sigma$ with respect to vertex $p$.  Then
  \begin{equation*}
  \begin{aligned}
  \Sud_{d-1}(\Sigma)
    &\zeq \left\{ X_{1,p}+\lambda \st
     \lambda\in\Sud_{d-1}(\Delta),\ \lambda\neq 0 \right\}\\
    &\qquad\cup \left\{ X_{1,p}+\frac{X_{1,p}}{X_{2,p}} \Up\mu \st
     \mu\in\Sud_{d-2}(\Lambda),\ \mu\neq 0 \right\}\\
    &\qquad\cup \left\{ X_{1,p} \right\}^{\betti_{d-1}(\Delta)}.
  \end{aligned}
  \end{equation*}
\end{proposition}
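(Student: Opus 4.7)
My plan is to adapt the proof of Proposition~\ref{cone-spectrum} to the near-cone setting. A preliminary observation is that purity of $\Sigma^d$ together with the near-cone property forces $\Delta_{d-1}=\Lambda_{d-1}$: any $(d-1)$-face $F\not\ni p$ lies in some $d$-facet $G$ of $\Sigma$, and if $G\neq F\cup p$ then $G\in\Delta_d$ and the near-cone axiom applied to any $v\in G\setminus F$ gives $F=G\setminus v\in\Lambda$. Decomposing $C_i(\Sigma)=C_i(\Delta)\oplus p\otimes C_{i-1}(\Lambda)$ accordingly, a direct calculation from \eqref{vertex-boundary} shows that $\Abd_{\Sigma,d}$ takes the block upper-triangular form
\begin{equation*}
\Abd_{\Sigma,d} \;=\; \begin{pmatrix} \Abd_{\Delta,d} & x_{1,p}\,\iota \\ 0 & -\frac{x_{1,p}}{x_{2,p}}\,\Up\Abd_{\Lambda,d-1} \end{pmatrix},
\end{equation*}
where $\iota\colon C_{d-1}(\Lambda)\hookrightarrow C_{d-1}(\Delta)$ is the inclusion. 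Multiplying by $\Acbd_{\Sigma,d}$ gives $\LLud_{\Sigma,d-1}$ as an explicit $2\times 2$ block operator whose diagonal blocks are $\LLud_{\Delta,d-1}+X_{1,p}\iota\iota^T$ and $\frac{X_{1,p}}{X_{2,p}}\Up\LLud_{\Lambda,d-2}$.

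The crucial new ingredient, beyond the cone argument, is the identity
\begin{equation*}
\Acbd_{\Delta,d}\circ\Up\Acbd_{\Lambda,d-1}\;=\;0.
\end{equation*}
Its proof is the familiar signed cancellation: expanding the composite on a basis element $[F]$ with $F\in\Lambda_{d-2}$, each term is indexed by an unordered pair $\{j,k\}$ with $F\cup j\cup k\in\Delta_d$, and the near-cone axiom applied to $G:=F\cup j\cup k$ forces both $F\cup j$ and $F\cup k$ to lie in $\Lambda_{d-1}$, so the two orderings of $\{j,k\}$ contribute the same monomial with opposite signs and cancel. A parallel bookkeeping with the $\Up$-exponents in \eqref{vertex-boundary} yields $\Abd_{\Delta,d-1}|_{C_{d-1}(\Lambda)}=\Up\Abd_{\Lambda,d-1}$ when $\dim\Delta=d$ (and $\Abd_{\Lambda,d-1}$ when $\dim\Delta=d-1$), which is what ensures that vectors killed by $\Abd_{\Delta,d-1}$ are also killed by $\Abd_{\Lambda,d-1}$.

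With these tools I construct three families of eigenvectors, mirroring the three steps of Proposition~\ref{cone-spectrum}: \textbf{(a)} for each $V\in C^{\rm ud}_{d-1}(\Delta)$ with nonzero eigenvalue $\lambda$, the near-cone identities place $V$ in $C_{d-1}(\Lambda)\cap\ker\Abd_{\Lambda,d-1}$, so $(V,0)$ is an eigenvector of $\LLud_{\Sigma,d-1}$ with eigenvalue $X_{1,p}+\lambda$; \textbf{(b)} for each $W\in C^{\rm du}_{d-1}(\Lambda)$ with nonzero eigenvalue $\mu$, set $W'=\Abd_{\Lambda,d-1}W$; the cancellation identity gives $\LLud_{\Delta,d-1}(\Up W)=0$, so $A=(\Up W,0)$ and $B=(0,\Up W')$ span an invariant $2$-dimensional subspace on which $\LLud_{\Sigma,d-1}$ acts exactly as in the cone calculation, producing the eigenvalue $X_{1,p}+\frac{X_{1,p}}{X_{2,p}}\Up\mu$ (together with a spurious $0$); \textbf{(c)} for each $Z\in C^0_{d-1}(\Delta)$, the vector $(Z,0)$ is an eigenvector with eigenvalue $X_{1,p}$, contributing multiplicity $\betti_{d-1}(\Delta)$. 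The main technical obstacle is establishing the cancellation identity; once it is in hand, everything else is a routine adaptation of the cone proof, and a dimension count using $\rank\Abd_{\Sigma,d}=f_d(\Sigma)-\betti_d(\Sigma)$ together with the identification $\Delta_{d-1}=\Lambda_{d-1}$ confirms that the three families exhaust the nonzero spectrum.
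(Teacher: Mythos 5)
Your argument is essentially correct in substance, but it takes a genuinely different route from the paper's. The paper constructs no eigenvectors for the near-cone at all: after disposing of the cone case, it observes that purity plus the near-cone axiom give $\Lambda=\Delta_{(d-1)}$ and $\Sigma=(p*\Delta)_{(d)}$ (equations \eqref{Delta-Lambda} and \eqref{Sigma-Delta}), and then simply transfers the cone spectrum of Proposition~\ref{cone-spectrum} across skeleta using Lemma~\ref{skeleton-Laplacian}, which says that passing to a skeleton only applies $\Up$ to the spectrum; the whole proof is a short reduction. You instead redo the eigenvector construction of Proposition~\ref{cone-spectrum} directly on $\Sigma$, via the block-triangular form of $\Abd_{\Sigma,d}$ relative to $C_i(\Sigma)=C_i(\Delta)\oplus p\otimes C_{i-1}(\Lambda)$, with the mixed identity $\Acbd_{\Delta,d}\circ\Up\Acbd_{\Lambda,d-1}=0$ as the new ingredient; your cancellation proof correctly isolates where the near-cone axiom enters (it guarantees $F\cup k\in\Lambda$, not merely $F\cup k\in\Delta$). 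Note that once one knows the full statement $\Lambda=\Delta_{(d-1)}$ (your purity argument, applied in all dimensions rather than only to $(d-1)$-faces), your identity $\Abd_{\Delta,d-1}=\Up\Abd_{\Lambda,d-1}$ transposes and the cancellation identity is just $\Acbd_{\Delta,d}\Acbd_{\Delta,d-1}=0$ from Lemma~\ref{boundary-map-lemma}. Your route is close to the method of \cite[Lemma~5.3]{DR}, which the paper cites as its model; it buys explicit eigenvectors for the near-cone, while the paper's skeleton-of-a-cone reduction buys brevity and avoids re-verifying the eigenvector algebra.

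Two steps need shoring up. First, the closing exhaustion count: your three families supply $\dim C^{\rm ud}_{d-1}(\Delta)+\dim C^{\rm du}_{d-1}(\Lambda)+\betti_{d-1}(\Delta)=f_{d-1}(\Lambda)$ independent eigenvectors with nonzero eigenvalues (nonzero because no eigenvalue of $\Delta$ or $\Lambda$ can involve the variables $x_{i,p}$ --- say this explicitly, as the paper does in the cone case), whereas the number of nonzero eigenvalues of $\LLud_{\Sigma,d-1}$ is $\rank\Abd_{\Sigma,d}=f_d(\Sigma)-\betti_d(\Sigma)=f_d(\Delta)+f_{d-1}(\Lambda)-\betti_d(\Sigma)$. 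So the count closes only because $\betti_d(\Sigma)=f_d(\Delta)$ for a near-cone; this does not follow from the identification $\Delta_{d-1}=\Lambda_{d-1}$ and the rank formula alone, but it is exactly the top-dimensional case of the Bj\"orner--Kalai formula \eqref{beta-shifted} (alternatively, Mayer--Vietoris for $\Sigma=\Delta\cup(p*\Lambda)$ with $\Delta\cap(p*\Lambda)=\Lambda=\Delta_{(d-1)}$, or the two-consecutive-dimensions count used at the end of the proof of Proposition~\ref{cone-spectrum}). Second, your families (a) and (c) use unraised vectors $(V,0)$ and $(Z,0)$; this is right precisely when $\dim\Delta=d$, since then the $(2,1)$ block $-\tfrac{X_{1,p}}{x_{2,p}}\Up\Abd_{\Lambda,d-1}$ literally equals $-\tfrac{X_{1,p}}{x_{2,p}}\Abd_{\Delta,d-1}$ and therefore kills $C^{\rm ud}_{d-1}(\Delta)\oplus C^{0}_{d-1}(\Delta)$. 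When $\Sigma$ is a cone ($\dim\Delta=d-1$) the two matrices differ by a raise and $(Z,0)$ need not be annihilated; either raise those vectors in that case or, simpler, dispose of the cone case at the outset by citing Proposition~\ref{cone-spectrum}, as the paper does.
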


\begin{proof}
If $\Sigma$ is a cone, then the result follows from direct application of Proposition~\ref{cone-spectrum}.  
(Note that $\Sud_{d-1}(\Delta)$ consists of only $0$'s in this case, since $\Delta$ is only $(d-1)$-dimensional.)
Thus, we may as  well assume for the remainder of the proof that $\Sigma$ is not a cone. 
In this case,  $\dim \Delta = d$ and $\dim \Lambda = d-1$.  
It is not difficult to see that
\begin{align}
\label{Delta-Lambda} \Delta_{(d-1)} &= \Lambda, \, \, \textrm{and} \\
\label{Sigma-Delta} \Sigma_{(d)} = \Sigma &= (p * \Delta)_{(d)}.
\end{align}
Applying Lemma~\ref{skeleton-Laplacian} to equation~\eqref{Sigma-Delta}
(keeping in mind that $\dim(p * \Delta) =  d+1$), and then applying
Proposition~\ref{cone-spectrum}, we find that
\begin{align*}
\Up\Sud_{d-1}(\Sigma)
  &= \Sud_{d-1}(p*\Delta)\\
  &\zeq \left\{ X_{2,p}+\Up\lambda \st
   \lambda\in\Sud_{d-1}(\Delta),\ \lambda\neq 0 \right\} \\
  &\qquad\cup\left\{ X_{2,p}+\frac{X_{2,p}}{X_{3,p}} \Up\mu \st
   \mu\in\Sud_{d-2}(\Delta),\ \mu\neq 0 \right\}\notag\\
  &\qquad\cup\left\{ X_{2,p} \right\}^{\betti_{d-1}(\Delta)}
\end{align*}
so that
\begin{align*}
\Sud_{d-1}(\Sigma)
  &= \left\{ X_{1,p}+\lambda \st
   \lambda\in\Sud_{d-1}(\Delta),\ \lambda\neq 0 \right\} \\
  &\qquad\cup \left\{ X_{1,p}+\frac{X_{1,p}}{X_{2,p}} \mu \st
   \mu\in\Sud_{d-2}(\Delta),\ \mu\neq 0 \right\} \\
  &\qquad\cup \left\{ X_{1,p} \right\}^{\betti_{d-1}(\Delta)}. 
\end{align*}
The desired result now follows from applying Lemma~\ref{skeleton-Laplacian}
to equation~\eqref{Delta-Lambda}.
\end{proof}

\section{The Laplacian spectrum of a shifted complex}
\label{z-poly-section}

In this section, we explicitly describe the eigenvalues of the
algebraic finely weighted Laplacians of a shifted complex~$\Sigma$.
The eigenvalues are Laurent polynomials
called \emph{$z$-polynomials}, which are in one-to-one correspondence
with the \emph{critical pairs} of the complex: pairs $(A,B)$ such that $A\in\Sigma$,
$B\not\in\Sigma$, and $B$ covers $A$ in the componentwise partial order.
The main result, Theorem~\ref{zpoly-theorem}, is proved by
establishing 
identical recurrences for the $z$-polynomials (Theorem~\ref{shifted-recurrence}) 
and critical pairs (Corollary~\ref{allsg-shifted}).

\subsection{\emph{z}-polynomials}\label{z-poly-subsection}

Let $S$ and $T$ be multisets of integers.  Define a Laurent polynomial
$z(S,T)$ by the formula
  \begin{equation} \label{z-poly}
  z(S,T) = \frac{1}{\Up X_S} \sum_{j\in T} X_{S\cup j},
  \end{equation}
where as usual the symbol $\cup$ denotes multiset union.  
An example was given at the end of Example~\ref{ex:bipyramid}.

\begin{proposition} \label{z-recurrences}
Let $d>i$ be integers, and let $S,T$ be sets of integers greater than $p$.
Then
  \begin{equation} \label{del-recurrence}
  X_{d-i+1,p} + \Up^{d-i}z(S,T) ~=~ \Up^{d-i}z(S,\tilde T)
  \end{equation}
and
  \begin{equation} \label{link-recurrence}
  X_{d-i+1,p} + \frac{X_{d-i+1,p}}{X_{d-i+2,p}} \Up^{d-i+1}z(S,T) ~=~ \Up^{d-i}z(\tilde S,\tilde T)
  \end{equation}
where $\tilde S = S\cup\{p\}$ and $\tilde T = T\cup\{p\}$.
\end{proposition}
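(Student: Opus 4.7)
The plan is to prove both identities by direct computation from the definition
$z(S,T) = \frac{1}{\Up X_S}\sum_{j\in T} X_{S\cup j}$, exploiting the two monomial identities (X-ident) and (tilde:monom). The proof reduces to the case $d-i = 0$: once we establish the identities
$z(S,\tilde T) = X_{1,p} + z(S,T)$ and
$z(\tilde S,\tilde T) = X_{1,p} + \frac{X_{1,p}}{X_{2,p}}\Up z(S,T)$,
applying $\Up^{d-i}$ to both sides produces the stated formulas, since $\Up^a X_{1,p} = X_{a+1,p}$ and $\Up$ is multiplicative.

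For the deletion recurrence, I would split the sum in $z(S,\tilde T)$ according to whether the new element equals $p$ or lies in $T$:
\[
z(S,\tilde T) ~=~ \frac{X_{S\cup p}}{\Up X_S} + \frac{1}{\Up X_S}\sum_{j\in T} X_{S\cup j}.
\]
Since $S$ consists of integers greater than $p$, we have $S\cup p = \tilde S$ as multisets, and identity (tilde:monom) with $a=0$ gives $X_{\tilde S}/\Up X_S = X_{1,p}$. The remaining sum is $z(S,T)$ by definition, so $z(S,\tilde T) = X_{1,p} + z(S,T)$. Applying $\Up^{d-i}$ yields~\eqref{del-recurrence}.

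For the link recurrence, the bookkeeping is slightly more delicate but the idea is the same. Using (X-ident) with $a = 0$, for every $j > p$ we have $X_{\tilde S\cup j} = X_{1,p}\cdot \Up X_{S\cup j}$, and hence $\Up X_{\tilde S} = X_{2,p}\cdot \Up^2 X_S$. Splitting $z(\tilde S,\tilde T)$ at the term $j=p$ gives
\[
z(\tilde S,\tilde T) ~=~ \frac{X_{\tilde S\cup p}}{\Up X_{\tilde S}}
  ~+~ \frac{X_{1,p}}{X_{2,p}\,\Up^2 X_S} \sum_{j\in T} \Up X_{S\cup j}.
\]
The first term equals $X_{1,p}$ by another application of (tilde:monom), now to $\tilde S$ in place of $S$. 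The second term equals $\frac{X_{1,p}}{X_{2,p}}\Up z(S,T)$, because $\Up$ applied to the defining formula for $z(S,T)$ yields $\Up z(S,T) = \frac{1}{\Up^2 X_S}\sum_{j\in T} \Up X_{S\cup j}$. Combining and applying $\Up^{d-i}$ gives~\eqref{link-recurrence}.

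The only real obstacle is keeping the multiset-to-monomial conversions straight: one must verify that $p$ really ends up in the first slot when passing from $S$ to $\tilde S$ (which uses the hypothesis that every element of $S$ exceeds $p$), and that $\Up$ shifts every indeterminate uniformly so it interacts cleanly with the subscripts produced by inserting $p$. Once those bookkeeping points are in place, no further calculation is needed.
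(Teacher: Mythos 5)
Your proof is correct and follows essentially the same route as the paper: a direct computation from the definition of $z(S,T)$, splitting off the $j=p$ term and invoking the identities \eqref{X-ident} and \eqref{tilde:monom}. The only cosmetic difference is that you establish the identities at the level $d-i=0$ and then apply $\Up^{d-i}$ using its multiplicativity, whereas the paper carries the exponent $d-i$ through the same calculation; both are equally valid.
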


\begin{proof}
We will use the identity \eqref{X-ident} repeatedly in the calculations.
For \eqref{del-recurrence}, observe that
  \begin{align*}
  X_{d-i+1,p} + \Up^{d-i} z(S,T)
  &= X_{d-i+1,p} + \frac{1}{\Up^{d-i+1} X_S} \sum_{j\in T} \Up^{d-i} X_{S\cup j}\\
  &= \frac{1}{\Up^{d-i+1} X_S} \left(
    X_{d-i+1,p} \cdot \Up^{d-i+1} X_S + \sum_{j\in T} \Up^{d-i} X_{S\cup j} \right)\\
  &= \frac{1}{\Up^{d-i+1} X_S} \left(
    \Up^{d-i} X_{\tilde S} + \sum_{j\in T} \Up^{d-i} X_{S\cup j} \right)\\
  &= \frac{1}{\Up^{d-i+1} X_S} \sum_{j\in \tilde T} \Up^{d-i} X_{S\cup j}\\
  &= \Up^{d-i} z(S,\tilde T),
  \end{align*}
and for \eqref{link-recurrence}, observe that
  \begin{align*}
  X_{d-i+1,p} + \frac{X_{d-i+1,p}}{X_{d-i+2,p}} & \Up^{d-i+1}z(S,T)
  = X_{d-i+1,p} + \frac{X_{d-i+1,p}}{X_{d-i+2,p}\cdot\Up^{d-i+2} X_S} \sum_{j\in T} \Up^{d-i+1} X_{S\cup j}\\
  &= X_{d-i+1,p} + \frac{X_{d-i+1,p}}{\Up^{d-i+1} X_{\tilde S}} \sum_{j\in T} \Up^{d-i+1} X_{S\cup j}\\
  &= \frac{1}{\Up^{d-i+1} X_{\tilde S}} \left(
    X_{d-i+1,p}\cdot\Up^{d-i+1} X_{\tilde S} + \sum_{j\in T} X_{d-i+1,p}\cdot\Up^{d-i+1} X_{S\cup j} \right)\\
  &= \frac{1}{\Up^{d-i+1} X_{\tilde S}} \left(
    \Up^{d-i} X_{\tilde S\cup p} + \sum_{j\in T} \Up^{d-i} X_{\tilde S\cup j} \right)\\
  &= \frac{1}{\Up^{d-i+1} X_{\tilde S}} \sum_{j\in \tilde T} \Up^{d-i} X_{\tilde S\cup j}\\
  &= \Up^{d-i} z(\tilde S,\tilde T).
  \end{align*}
\end{proof}

\begin{theorem} \label{shifted-recurrence}
Let $\Sigma^d$ be a shifted simplicial complex.  Then every nonzero eigenvalue of $\Sud_{i-1}(\Sigma)$ has the form of a $z$-polynomial.
Moreover, the spectrum~$\Sud_{i-1}(\Sigma)$ is determined recursively as follows.

If $\Sigma$ has no vertices, then $\Sud_{i-1}(\Sigma)$ has no nonzero elements.

Otherwise,
  \begin{equation} \label{shiftedEVals:nearcone}
  \begin{aligned}
  \Sud_{i-1}(\Sigma) &\zeq
  \left\{\Up^{d-i}z(S,\tilde T) \st  0 \neq z(S,T)\in \Sud_{i-1}(\Delta)\right\}\\
  &\cup~ \left\{\Up^{d-i}z(\tilde S, \tilde T) \st 0 \neq z(S,T)\in \Sud_{i-2}(\Lambda)\right\}
  ~\cup~ \left\{\Up^{d-i}z(\0,\tilde \0) \right\}^{\betti_{i-1}(\Delta)}
  \end{aligned}
  \end{equation}
where $p$ is the initial vertex of $\Sigma_{[i]}$; $\tilde S = S \cup p$; $\tilde T = T \cup p$; $\Delta = \del_p \Sigma_{[i]}$; and $\Lambda = \link_p \Sigma_{[i]}$.
\end{theorem}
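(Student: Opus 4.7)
The plan is to prove the theorem by strong induction on the number of vertices of $\Sigma$, combining Corollary~\ref{pure-Sud}, the near-cone recurrence (Proposition~\ref{near-cone-recurrence}), and the $z$-polynomial identities (Proposition~\ref{z-recurrences}).

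For the base case, if $\Sigma$ has no vertices then $C_{i-1}(\Sigma) = 0$, so the spectrum $\Sud_{i-1}(\Sigma)$ contains no nonzero elements, as required. For the inductive step, assume the conclusion holds for every shifted complex on strictly fewer vertices, at every choice of dimension index. First reduce to the pure $i$-skeleton via Corollary~\ref{pure-Sud}, which gives $\Sud_{i-1}(\Sigma) \zeq \Up^{d-i}\Sud_{i-1}(\Sigma_{[i]})$. Since the pure $i$-skeleton of a shifted complex is again shifted (hence in particular a pure near-cone of dimension $i$ with apex equal to its initial vertex $p$), Proposition~\ref{near-cone-recurrence} applies to $\Sigma_{[i]}$ with $d$ replaced by $i$. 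This expresses $\Sud_{i-1}(\Sigma_{[i]})$ in terms of $\Sud_{i-1}(\Delta)$ and $\Sud_{i-2}(\Lambda)$, where $\Delta = \del_p\Sigma_{[i]}$ and $\Lambda = \link_p\Sigma_{[i]}$.

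Both $\Delta$ and $\Lambda$ are shifted complexes on strictly fewer vertices than $\Sigma$, so by the inductive hypothesis their nonzero Laplacian eigenvalues are all $z$-polynomials. I then apply $\Up^{d-i}$ to each eigenvalue produced by Proposition~\ref{near-cone-recurrence} and invoke Proposition~\ref{z-recurrences} in each of the three cases. A contribution of the form $X_{1,p} + z(S,T)$ arising from a nonzero eigenvalue of $\Sud_{i-1}(\Delta)$ becomes $\Up^{d-i}(X_{1,p} + z(S,T)) = \Up^{d-i}z(S,\tilde T)$ by identity~\eqref{del-recurrence}; a contribution $X_{1,p} + \tfrac{X_{1,p}}{X_{2,p}}\Up z(S,T)$ arising from a nonzero eigenvalue of $\Sud_{i-2}(\Lambda)$ becomes $\Up^{d-i}z(\tilde S, \tilde T)$ by~\eqref{link-recurrence}; and each of the $\betti_{i-1}(\Delta)$ copies of $X_{1,p}$ becomes $\Up^{d-i}X_{1,p} = \Up^{d-i} z(\0, \tilde\0)$, since $z(\0, \{p\}) = X_{1,p}/\Up X_\0 = X_{1,p}$. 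This matches the claimed multiset term-by-term.

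The main delicate point is bookkeeping around the raising operator and the pure-skeleton reduction. The induction hypothesis must be phrased uniformly over all dimension indices, so that it applies to $\Sud_{i-1}(\Delta)$ and $\Sud_{i-2}(\Lambda)$ even when $\dim\Delta$ or $\dim\Lambda$ differs from $i$ (in particular when $\Sigma_{[i]}$ happens to be a cone with apex $p$, in which case $\dim\Delta = i-1$ and the corresponding sum is vacuous). One also must verify that $\Up^{d-i}$ distributes cleanly through the near-cone formula so that each summand in Proposition~\ref{near-cone-recurrence} aligns precisely with the left-hand side of the corresponding identity in Proposition~\ref{z-recurrences}; this uses only that $\Up$ is an algebra homomorphism, together with the fact that applying $\Up^{d-i}$ inside an eigenvector equation multiplies each eigenvalue by $\Up^{d-i}$. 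Once this alignment is verified, the induction closes without further complication.
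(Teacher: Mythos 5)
Your proof is correct and follows essentially the same route as the paper's: induction on the number of vertices, reduction to the pure $i$-skeleton via Corollary~\ref{pure-Sud}, the near-cone recurrence of Proposition~\ref{near-cone-recurrence} applied to $\Sigma_{[i]}$, and the identities of Proposition~\ref{z-recurrences} to rewrite each contribution as a (raised) $z$-polynomial, the only immaterial difference being that you apply $\Up^{d-i}$ termwise at the start while the paper works inside $\Sigma_{[i]}$ (where the raising exponent is zero) and invokes Corollary~\ref{pure-Sud} at the very end. One cosmetic remark: in the base case with $i=0$ and $\Sigma=\{\emptyset\}$ the chain group $C_{-1}(\Sigma)$ is not zero, but the up-down Laplacian there is still the zero map (there are no vertices), so your conclusion that $\Sud_{i-1}(\Sigma)$ has no nonzero elements still holds.
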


\begin{proof}
The proof is by induction on the number of vertices of $\Sigma$.  When
$\Sigma$ has no vertices, $\Sigma$ is either the empty complex with no
faces, or the trivial complex whose only face is the empty face.  In
either case $\Sud_{i-1}(\Sigma)$ has no nonzero elements.

We now assume that $\Sigma$ has at least one vertex.  Then
\begin{align*}
\Sud_{i-1}(\Sigma_{[i]}) & \zeq \left\{ X_{1,p}+\lambda \st
     \lambda\in\Sud_{i-1}(\Delta),\ \lambda\neq 0 \right\}\\
    &\qquad\cup \left\{ X_{1,p}+\frac{X_{1,p}}{X_{2,p}} \Up\mu \st
     \mu\in\Sud_{i-2}(\Lambda),\ \mu\neq 0 \right\}\\
    &\qquad\cup \left\{ X_{1,p} + 0\right\}^{\betti_{i-1}(\Delta)}\\
&= \left\{ X_{1,p}+z(S,T) \st
     z(S,T)\in\Sud_{i-1}(\Delta),\  z(S,T) \neq 0 \right\}\\
    &\qquad\cup \left\{ X_{1,p}+\frac{X_{1,p}}{X_{2,p}} z(S,T) \st
     z(S,T)\in\Sud_{i-2}(\Lambda),\ z(S,T)\neq 0 \right\}\\
    &\qquad\cup \left\{ X_{1,p} + z(\0,\0) \right\}^{\betti_{i-1}(\Delta)}\\
 &=  \left\{ z(S,\tilde T) \st
     0 \neq z(S,T) \in \Sud_{i-1}(\Delta) \right\}\\
    &\qquad\cup \left\{ z(\tilde S, \tilde T) \st
    0 \neq z(S,T) \in \Sud_{i-2}(\Lambda) \right\}\\
    &\qquad\cup \left\{z(\0,\tilde \0) \right\}^{\betti_{i-1}(\Delta)}.
\end{align*}
The $\zeq$-equivalence above is by Proposition~\ref{near-cone-recurrence}.  
The following equality is justified by the identity $z(\0,\0)=0$ and induction
on the number of vertices, since 
$\Delta$ and $\Lambda$ each have one fewer vertex than $\Sigma$.  Note that $\lambda$ and $\mu$ are each replaced by $z(S,T)$, with no raising operator, because 
$\dim \Delta \leq i$ and $\dim \Lambda = i-1$.  The final equality comes from
Proposition~\ref{z-recurrences}.
The result now follows from Corollary~\ref{pure-Sud}.
\end{proof}

\subsection{Critical pairs}
Throughout this section, let $\family$ be a $k$-family of sets of integers,
and let $p$ be the smallest integer occurring in any element of $\family$.

\begin{definition}
A \emph{critical pair} for $\family$ is an ordered pair $(A,B)$,
where $A=\{a_1<a_2<\cdots<a_{k}\}$ and
$B=\{b_1<b_2<\cdots<b_{k}\}$ are sets of integers such that
$A\in\family$, $B\not\in\family$, and $B$ covers $A$ in
componentwise order.  That is, $b_i=a_i+1$ for exactly one $i$,
and $b_j=a_j$ for all $j\neq i$.  
(Note that $b_i$ need not be in the vertex set of $\family$.)
The \emph{signature} of $(A,B)$ is the set of vertices
  $\sg(A,B)=\{a_1,\ldots,a_{i-1},a_i\}$.
The \emph{long signature} is the ordered pair of sets
  $\lsg(A,B)=(S,T)$,
where $S=\{a_1,\ldots,a_{i-1}\}$ and $T=\{j\st p\leq j\leq a_i\}$.
The multisets of signatures and long signatures of critical pairs of $\family$ are denoted
$\setsg(\family)$ and $\setlsg(\family)$ respectively.
\end{definition}

As described in the Introduction and Example~\ref{ex:bipyramid}, critical pairs are
especially significant for shifted simplicial complexes.
We will soon see that the critical pairs of a shifted complex are in bijection
with the eigenvalues of its algebraic finely weighted Laplacian.

\begin{definition}
The \emph{degree} of vertex $v$ in the family $\family$ is
$\deg_{\family}(v) = \abs{\{F \in \family \st v \in F\}}$.
\end{definition}

\begin{proposition}\label{degree-signature}
Let $\family$ be a shifted family.  Then $\deg_{\family}(v) - \deg_{\family}(v+1)$ 
counts the number of signatures of $\family$ 
whose greatest element is $v$.
\end{proposition}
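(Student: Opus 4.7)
The plan is to partition the faces of $\family$ containing $v$ according to their behavior under the swap $v \leftrightarrow v+1$, and then extract the count of critical pairs by comparing with $\deg_\family(v+1)$ through a shiftedness bijection.

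Write each face $F \in \family$ with $v \in F$ as belonging to exactly one of the following three classes:
\begin{itemize}
\item[(I)] $v+1 \in F$;
\item[(II)] $v+1 \notin F$ and $F' := (F \setminus \{v\}) \cup \{v+1\} \in \family$;
\item[(III)] $v+1 \notin F$ and $F' \notin \family$.
\end{itemize}
First observe that the critical pairs whose signature has greatest element $v$ are in bijection with the faces of class~(III): given such a critical pair $(A,B)$, having greatest signature element $v$ means exactly that $v \in A$, $v+1 \notin A$, and $B = (A \setminus \{v\}) \cup \{v+1\} \notin \family$; conversely each $F$ in class~(III) yields the critical pair $(F,F')$.

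Next, I would set up a bijection between class~(II) faces and faces $G \in \family$ with $v+1 \in G$ and $v \notin G$ via $F \mapsto F'$, with inverse $G \mapsto (G \setminus \{v+1\}) \cup \{v\}$. The only thing that needs to be checked is that this inverse actually lands in $\family$, and here is the one place where shiftedness intervenes: $(G \setminus \{v+1\}) \cup \{v\} \cpleq G$ componentwise, so it belongs to $\family$ because $G$ does. This is the crucial input and essentially the only delicate step; everything else is bookkeeping.

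Finally, count both degrees in terms of these classes. The faces of $\family$ containing $v$ are exactly those in (I), (II), (III), so $\deg_\family(v) = |\text{I}| + |\text{II}| + |\text{III}|$. The faces containing $v+1$ split into those that also contain $v$, which is class~(I), and those that do not, which via the bijection of the previous paragraph is in one-to-one correspondence with class~(II); hence $\deg_\family(v+1) = |\text{I}| + |\text{II}|$. Subtracting yields $\deg_\family(v) - \deg_\family(v+1) = |\text{III}|$, which by the first paragraph equals the number of signatures of $\family$ whose greatest element is $v$, as required.
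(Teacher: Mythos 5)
Your proposal is correct and is essentially the paper's own argument: your classes (I), (II), (III) are exactly the paper's $S_1, S_2, S_3$, the shiftedness step (the swap-down of any face containing $v+1$ but not $v$ stays in $\family$) is the paper's observation that $T_3=\emptyset$, and the identification of class (III) with critical pairs whose signature has greatest element $v$ matches the paper's final step. No gaps; the bookkeeping and the single use of shiftedness are placed just as in the published proof.
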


\begin{proof}
Let $S=\{F\in\family\st v\in F\}$ and $T=\{F\in\family\st v+1\in F\}$.
Partition $S = S_1\dju S_2\dju S_3$ and $T=T_1\dju T_2\dju T_3$ as follows:
  $$
  \begin{array}{l}
  S_1 = \{F \in \family\st v,v+1\in F\},\\
  S_2 = \{F \in \family\st v\in F,\ v+1\not\in F,\ F\sm\{v\}\dju\{v+1\}\in\family\},\\
  S_3 = \{F \in \family\st v\in F,\ v+1\not\in F,\ F\sm\{v\}\dju\{v+1\}\not\in\family\},\\\\
  T_1 = \{F \in \family\st v,v+1\in F\},\\
  T_2 = \{F \in \family\st v\not\in F,\ v+1\in F,\ F\sm\{v+1\}\dju\{v\}\in \family\},\\
  T_3 = \{F \in \family\st v\not\in F,\ v+1\in F,\ F\sm\{v+1\}\dju\{v\}\not\in \family\}.
  \end{array}
  $$
Then $S_1 = T_1$, and there is an obvious bijection between 
$S_2$ and $T_2$.  Since $\family$ is shifted, 
$T_3=\0$, so 
$\deg_{\family}(v) - \deg_{\family}(v+1) = \abs{S} - \abs{T} = \abs{S_{3}}$.  

Finally, if $F\in S_3$, then $(F,G)$ is a critical pair,
where $G = F \sm \{v\} \dju \{v+1\}$.  For each such critical pair, the
greatest element of the signature is $v$.  Conversely, if $(A,B)$
is a critical pair whose signature's greatest element is $v$, then
$A \in S_3$.
\end{proof}

\begin{corollary}\label{degree-beta}
Let $\Sigma^d$ be a pure shifted simplicial complex with
initial vertex $p$.  Then 
$\deg_{\Sigma_d}(p) - \deg_{\Sigma_d} (p+1) = \betti_{d-1}(\del_p \Sigma)$.
\end{corollary}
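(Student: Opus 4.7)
The plan is to apply Proposition~\ref{degree-signature} to the shifted family $\Sigma_d$ with $v=p$. Since $p$ is the smallest vertex of $\Sigma$, the proposition identifies $\deg_{\Sigma_d}(p)-\deg_{\Sigma_d}(p+1)$ with the number of signatures of $\Sigma_d$ whose greatest element is $p$. I will then exhibit a bijection between those signatures and the set counted by the Bj\"orner-Kalai formula~\eqref{beta-shifted} for $\del_p\Sigma$, which is shifted (hence a near-cone) with initial vertex $p+1$.

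A signature $\{a_1<\cdots<a_i\}$ with $a_i=p$ is forced to be the singleton $\{p\}$, because $p$ is the smallest element of the vertex set of $\Sigma$. Such a signature arises from a critical pair of the form $A=\{p\}\cup A'\in\Sigma_d$, $B=\{p+1\}\cup A'\notin\Sigma_d$, where $A'=\{a_2<\cdots<a_{d+1}\}$ satisfies $a_2>p+1$. I plan to send each such critical pair to $A'=A\sm\{p\}\in(\del_p\Sigma)_{d-1}$. This is well-defined because $A'\subseteq A\in\Sigma$ and $p\notin A'$. Moreover, $p+1\notin A'$ by construction, and because $\Sigma$ is pure of dimension $d$ the $(d+1)$-element set $B$ lies in $\Sigma$ iff it lies in $\Sigma_d$; combined with $p\notin B$, this is equivalent to $A'\cup\{p+1\}\notin\del_p\Sigma$. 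Thus the image of the map lies in precisely the set appearing in~\eqref{beta-shifted} for $\del_p\Sigma$ at apex $p+1$.

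For the inverse, given $A'\in(\del_p\Sigma)_{d-1}$ with $p+1\notin A'$ and $A'\cup\{p+1\}\notin\del_p\Sigma$, I would set $A=A'\cup\{p\}$ and verify $A\in\Sigma_d$. By purity, $A'$ is contained in some facet $F$ of $\Sigma$; if $p\in F$ then $F=A$ and there is nothing more to check, while if $p\notin F$ then $F=A'\cup\{v\}$ for some $v>p$ with $v\notin A'$. Since $A$ and $F$ share the elements of $A'$ and differ only in the inserted element ($p$ in $A$, $v$ in $F$, with $p<v$), comparing their sorted sequences position-by-position yields $A\cpleq F$ componentwise, and shiftedness then delivers $A\in\Sigma$. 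This componentwise comparison is the only nontrivial verification in the argument; once it is in hand, applying~\eqref{beta-shifted} to $\del_p\Sigma$ with apex $p+1$ identifies the number of signatures whose greatest element is $p$ with $\betti_{d-1}(\del_p\Sigma)$ and completes the proof.
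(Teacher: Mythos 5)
Your proof is correct and takes essentially the same route as the paper's: reduce via Proposition~\ref{degree-signature} to counting critical pairs of $\Sigma_d$ with signature $\{p\}$, then biject $A\mapsto A\sm\{p\}$ with the set counted by the Bj\"orner--Kalai formula~\eqref{beta-shifted} applied to $\del_p\Sigma$ with initial vertex $p+1$. The only difference is that you verify the inverse direction of the bijection explicitly (the step where purity and shiftedness are needed), which the paper asserts without detail.
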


\begin{proof}
Just as in the proof of Proposition~\ref{degree-signature} above, 
  $$
  \deg_{\Sigma_{d}}(p) - \deg_{\Sigma_{d}}(p+1) = \abs{S_3},
  $$
where  
  $$
  S_3 = \{F\in\Sigma_d \st p\in F,\ p+1\not\in F,\ F\sm\{p\}\dju\{p+1\}\not\in\Sigma_d \}.
  $$
There is a bijection between $S_3$ and the set
  $$
  S'_3 = \{G\in(\del_p\Sigma)_{d-1} \st p+1 \not\in G,\ G\dju\{p+1\}\not\in\del_p\Sigma\}
  $$
given by $G=F\sm\{p\}$.  Then, by equation~\eqref{beta-shifted},
  $$
  \betti_{d-1}(\del_p\Sigma) = \abs{S'_3}.
  $$
Combining the four displayed equations yields the desired result.
\end{proof}

\begin{proposition}\label{critpairs-recurrence}
Let $\Sigma$ be a shifted complex with initial vertex $p$.  Let $\Delta = \del_p \Sigma$ and 
$\Lambda = \link_p \Sigma$.  Then
  $$
  \sg(\Sigma_i) = \sg(\Delta_i) \cup \{p \dju F\st F \in \sg(\Lambda_{i-1})\}
  \cup \{p\}^{\deg_{\Sigma_i}(p) - \deg_{\Sigma_i}(p+1)},
  $$
where $\cup$ denotes multiset union.
\end{proposition}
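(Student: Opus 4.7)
The plan is to partition the critical pairs $(A,B)$ of $\Sigma_i$ into three types according to whether $p\in A$, and if so whether the unique index $j$ with $b_j=a_j+1$ satisfies $j=1$ or $j>1$, and then to put each type in signature-preserving bijection with one of the three contributions on the right-hand side.

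For Type~I (the case $p\notin A$), all entries of $A$ and $B$ exceed $p$, so neither lies over the apex. I will verify that $(A,B)$ is also a critical pair of $\Delta_i$: the membership $A\in\Delta_i$ follows since $A\in\Sigma_i$ and $p\notin A$, while $B\notin\Delta_i$ follows because $B\notin\Sigma_i$ and downward closure of $\Sigma$ forbids $B\cup\{p\}\in\Sigma_{i+1}$. The reverse map is immediate, and the signature is unchanged.

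For Type~II ($p\in A$ and $j>1$) we have $a_1=b_1=p$, so I send $(A,B)\mapsto(A\sm\{p\},B\sm\{p\})$. The definition of $\link_p$ gives $A\in\Sigma_i$ iff $A\sm\{p\}\in\Lambda_{i-1}$, and likewise for $B$, so this produces a bijection with critical pairs of $\Lambda_{i-1}$. The signature transforms as $\sg(A,B)=\{p\}\dju\sg(A\sm\{p\},B\sm\{p\})$, which is precisely the form in the second summand of the claimed formula.

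For Type~III ($p\in A$ and $j=1$), $B$ is obtained from $A$ by replacing $p$ by $p+1$, and $\sg(A,B)=\{p\}$. To count these pairs I will apply Proposition~\ref{degree-signature} to the shifted family $\Sigma_i$: the quantity $\deg_{\Sigma_i}(p)-\deg_{\Sigma_i}(p+1)$ enumerates the signatures of $\Sigma_i$ whose greatest element is $p$. Since every signature of a critical pair of $\Sigma_i$ consists of vertices $\geq p$, those with maximum equal to $p$ are exactly the singletons $\{p\}$, which are precisely the Type~III contributions. Combining the three types then yields the desired multiset identity. The main obstacle is simply careful bookkeeping of the index~$j$ and of the signature under the Type~II bijection; shiftedness of $\Sigma$ enters only once, through the appeal to Proposition~\ref{degree-signature} in Type~III.
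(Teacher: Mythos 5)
Your proposal is correct and takes essentially the same route as the paper: the paper also splits the critical pairs of $\Sigma_i$ according to whether the signature omits $p$, properly contains $p$, or equals $\{p\}$ (exactly your trichotomy on $p\in A$ and the differing index), uses the same deletion and link bijections with the same signature bookkeeping, and counts the $\{p\}$-signature pairs via Proposition~\ref{degree-signature}. The only difference is cosmetic: you would want to spell out the one-line check in Type~I that $p\notin B$, so a critical pair of $\Delta_i$ is automatically one of $\Sigma_i$, which the paper states explicitly.
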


\begin{proof}
The multiplicity of the signature $\{p\}$ follows from
Proposition~\ref{degree-signature}.
  
Suppose that $A\in\Delta_i$, $B\not\in\Delta_i$, and $B$ covers 
$A$ in the componentwise partial order.  Then $A\in\Sigma_i$ and
$p\not\in A$.  Since $p\not\in A$ and $A\cpless B$, we conclude that $p\not\in B$,
and so $B\not\in\Sigma_i$.  Hence we have a one-to-one map of multisets
  $$\sg(\Delta_i) \rightarrow \sg(\Sigma_i).$$

On the other hand, suppose that $A\in\Lambda_{i-1}$, $B\not\in\Lambda_{i-1}$, and
$B$ covers $A$ in the componentwise partial order.  Then $p\not\in A$
and $\tilde A=A\dju\{p\}\in\Sigma_i$.  The critical pair $(A,B)$ of $\Lambda_{i-1}$
gives rise to the critical pair $(\tilde A,\tilde B)$ of $\Sigma_i$, and it is 
clear that $\sg(\tilde A,\tilde B) = \sg(A,B)\dju\{p\}$.  Furthermore,
$\tilde B\not\in\Sigma_i$ because $B\not\in\Lambda_{i-1}$.  Hence we have a
one-to-one map of multisets
  $$\{p \dju F\st F \in \sg(\Lambda_{i-1})\} \rightarrow \sg(\Sigma_i).$$

Now we must show, conversely, that every signature $F \neq \{p\}$ of $\Sigma_i$
arises in one of these two ways.
Let $F$ be such a signature of $\Sigma_i$, with critical pair $(A,B)$.
So $A\in\Sigma_i$;
$B \not\in \Sigma_i$ (so $B\not\in\Delta_i$); and $B$ covers $A$ in the componentwise partial order.

First, if $p\not\in F$, then $p \not\in A$ and so $A \in \Delta_i$.
Thus $(A,B)$ is a critical pair for $\Delta_i$.

Second, if $p\in F$, then $F = F'\dju\{p\}$ for some $F'\neq\0$.
In this case, we have $p \in A$ and $p \in B$ for 
the critical pair $(A,B)$ whose signature is $F$.
Indeed, $p \in F$ directly implies that $p \in A$.
If $p\not\in B$, then $F = \sg(A,B) = \{p\}$.  Accordingly,
let $A'=A\sm\{p\}$ and $B'=B\sm\{p\}$.  Then $A'\in\Lambda_{i-1}$
and $B\not\in\Sigma_i$, so $B'\not\in\Lambda_{i-1}$, and $(A,B)$
is a critical pair for $\Lambda_{i-1}$.
\end{proof}

\begin{corollary}\label{allsg-shifted}
Let $\Sigma^d$ be a shifted simplicial complex,
and let $i\leq d$.  Then the multiset $\setlsg(\Sigma_i)$ is determined 
by the following recurrence.

If $\Sigma$ has no vertices, then $\setlsg(\Sigma_i) = \0$.

Otherwise,
\begin{equation} \label{multiset-recurrence}
\begin{aligned}
\setlsg(\Sigma_i) &= \{(S, \tilde T)\st (S,T) \in \setlsg(\Delta_i)\}\\
  &\qquad\cup \{(\tilde S, \tilde T)\st (S,T) \in \setlsg(\Lambda_{i-1})\}\\
  &\qquad\cup \{(\0,\tilde \0)\}^{\betti_{i-1}(\Delta)}
  \end{aligned}
\end{equation}
where $p$ is the initial vertex of $\Sigma_{[i]}$; $\tilde S = S \cup p$; $\tilde T = T \cup p$;
$\Delta = \del_p \Sigma_{[i]}$; and $\Lambda = \link_p \Sigma_{[i]}$.
\end{corollary}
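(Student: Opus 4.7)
The plan is to derive the recurrence for long signatures directly from the recurrence for (short) signatures given in Proposition~\ref{critpairs-recurrence}. The crucial observation is that the long signature $\lsg(A,B)=(S,T)$ is completely determined by the short signature $F=\sg(A,B)=\{a_1<\cdots<a_\ell\}$ together with the minimum vertex $p$ of the ambient family, via $S=F\sm\{a_\ell\}$ and $T=[p,a_\ell]$. Thus passing from short to long signatures is unambiguous, but $T$ depends on what the ambient initial vertex is, which is the subtlety that will produce the $\tilde{}$ operators in the recurrence.

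The base case when $\Sigma$ has no vertices is immediate. For the inductive step, I would first replace $\Sigma$ with $\Sigma_{[i]}$, observing that $(\Sigma_{[i]})_i=\Sigma_i$ so $\setlsg(\Sigma_i)$ is unchanged; this lets me apply Proposition~\ref{critpairs-recurrence} (whose proof naturally splits critical pairs of $\Sigma_i$ into three disjoint classes): (a) critical pairs that already live in $\Delta_i$; (b) critical pairs of the form $(A\dju\{p\}, B\dju\{p\})$ arising from a critical pair $(A,B)$ of $\Lambda_{i-1}$; and (c) critical pairs whose signature is $\{p\}$.

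Next I would compute the long signature in each class, being careful that the initial vertex of both $\Delta$ and $\Lambda$ is $p+1$, one greater than that of $\Sigma$. In class (a) the critical pair and the index at which $A$ and $B$ differ are unchanged; the short signature and hence $S$ are unchanged, but $T$ grows from $[p+1,a_\ell]$ to $[p,a_\ell]=T\cup\{p\}=\tilde T$, so $(S,T)\mapsto(S,\tilde T)$. In class (b), prepending $p$ to both $A$ and $B$ shifts the differing index up by one, so the new short signature is $\{p\}\dju\sg(A,B)$; both $S$ and $T$ gain the element $p$, giving $(S,T)\mapsto(\tilde S,\tilde T)$. In class (c), short signature $\{p\}$ forces the shift to occur at the first position, so the long signature is $(\0,[p,p])=(\0,\tilde\0)$; its multiplicity equals $\deg_{\Sigma_i}(p)-\deg_{\Sigma_i}(p+1)$ by Proposition~\ref{degree-signature}, which equals $\betti_{i-1}(\Delta)$ by Corollary~\ref{degree-beta} applied to the pure shifted complex $\Sigma_{[i]}$.

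The main obstacle is purely bookkeeping: tracking how the differing index of a covering pair behaves under prepending the apex $p$, and verifying that each of the three transformations $(S,T)\mapsto(S,\tilde T)$, $(S,T)\mapsto(\tilde S,\tilde T)$, and $(\text{signature }\{p\})\mapsto(\0,\tilde\0)$ is actually a bijection of multisets rather than merely a map. Once the three bijections are in place and the multiplicity in class (c) is identified via Corollary~\ref{degree-beta}, the stated recurrence \eqref{multiset-recurrence} falls out directly from the short-signature recurrence of Proposition~\ref{critpairs-recurrence}.
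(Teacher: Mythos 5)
Your proposal is correct and follows essentially the same route as the paper: it derives the long-signature recurrence from the short-signature decomposition of Proposition~\ref{critpairs-recurrence}, obtains the $\tilde T$'s from the shift of the initial vertex from $p$ to $p+1$ in $\Delta$ and $\Lambda$, and gets the multiplicity of $(\0,\tilde\0)$ from Corollary~\ref{degree-beta} applied to the pure shifted complex $\Sigma_{[i]}$. Your choice to pass to $\Sigma_{[i]}$ at the outset is just a minor repackaging of the paper's remark that the deletion and link of $\Sigma$ and of $\Sigma_{[i]}$ agree in the relevant dimensions.
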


\begin{proof}
If $\sg(A,B) = \{p\}$, then $\lsg(A,B) = (\0,\{p\}) = (\0,\tilde\0)$.
So the third term in \eqref{multiset-recurrence} arises from applying
Corollary~\ref{degree-beta} to the pure $i$-dimensional shifted
complex $\Sigma_{[i]}$. (Indeed, $\Sigma_{[i]}$ is shifted when $\Sigma$ is,
or even just when $\Sigma_i$ is.)

For the first two terms in \eqref{multiset-recurrence}, the only hard part is to note that if the first vertex of $\Sigma_i$ is $p$, then $p+1$ is the first vertex of $\Delta_i$ and $\Lambda_{i-1}$ (unless $\Delta_i = \0$, but in that case we don't have to worry about the first set).  This accounts for the $\tilde T$'s.

Even though Proposition~\ref{critpairs-recurrence} above defines $\Delta$ and $\Lambda$ somewhat differently than here, it is not a problem because
$(\del_p \Sigma)_i = (\del_p \Sigma_{[i]})_i$
and
$(\link_p \Sigma)_{i-1} = (\link_p \Sigma_{[i]})_{i-1}$.
Then, since $\Delta$ and $\Lambda$ only appear as
$\Delta_i$ and $\Lambda_{i-1}$, it doesn't matter whether we
set them to be the deletion and link of $\Sigma$ or of $\Sigma_{[i]}$.
\end{proof}

\subsection{Theorem~\ref{zpoly-theorem} and its consequences}

We can finally prove Theorem~\ref{zpoly-theorem}, which characterizes the
Laplacian spectra $\Sud_{i-1}(\Sigma)$ in terms of $z$-polynomials of
critical pairs.  In the notation we have developed, the theorem can be
restated as follows:

\setcounter{savesection}{\value{section}}
\setcounter{section}{\value{intro-section-counter}}
\setcounter{savetheorem}{\value{theorem}}
\setcounter{theorem}{\value{zpoly-counter}}
\begin{theorem}
Let $\Sigma^d$ be a shifted simplicial complex.  Then, for each $0 \leq i \leq d$,
\begin{equation} \label{the-big-one}
\Sud_{i-1}(\Sigma) \zeq\: \Up^{d-i}\{z(S,T)\st (S,T) \in \setlsg(\Sigma_{i})\}.
\end{equation}
\end{theorem}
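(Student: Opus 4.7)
The plan is to prove the theorem by induction on the number of vertices of $\Sigma$, leveraging the fact that Theorem~\ref{shifted-recurrence} and Corollary~\ref{allsg-shifted} give parallel recurrences, one for the Laplacian spectrum written as $z$-polynomials and one for the multiset of long signatures of critical pairs. Since the two recurrences already have matching three-term structures, the bulk of the argument will be term-by-term identification.

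For the base case, if $\Sigma$ has no vertices, then $\Sud_{i-1}(\Sigma)$ has no nonzero entries and $\setlsg(\Sigma_i) = \emptyset$, so both sides of \eqref{the-big-one} coincide up to $\zeq$. For the inductive step, let $p$ be the initial vertex of $\Sigma_{[i]}$, and set $\Delta = \del_p \Sigma_{[i]}$ and $\Lambda = \link_p \Sigma_{[i]}$. Both are shifted and have strictly fewer vertices than $\Sigma$, so the inductive hypothesis applies to each. Invoking Theorem~\ref{shifted-recurrence} expresses $\Sud_{i-1}(\Sigma)$ as a union of three pieces built from $\Sud_{i-1}(\Delta)$, $\Sud_{i-2}(\Lambda)$, and $\betti_{i-1}(\Delta)$ copies of $\Up^{d-i}z(\0,\tilde\0)$. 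The inductive hypothesis applied to $\Delta$ (with $\dim\Delta \leq i$, so a trivial $\Up^{0}$ factor in the nontrivial case) identifies the nonzero $z$-polynomials in $\Sud_{i-1}(\Delta)$ with $\{z(S,T) : (S,T) \in \setlsg(\Delta_i)\}$, and similarly for $\Sud_{i-2}(\Lambda)$ with $\setlsg(\Lambda_{i-1})$.

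Substituting these descriptions into the recurrence from Theorem~\ref{shifted-recurrence} yields
\[
\Sud_{i-1}(\Sigma) \zeq \Up^{d-i}\bigl(\{z(S,\tilde T) : (S,T) \in \setlsg(\Delta_i)\} \cup \{z(\tilde S,\tilde T) : (S,T) \in \setlsg(\Lambda_{i-1})\} \cup \{z(\0,\tilde\0)\}^{\betti_{i-1}(\Delta)}\bigr),
\]
and Corollary~\ref{allsg-shifted} recognizes the inner multiset as exactly $\{z(S,T) : (S,T) \in \setlsg(\Sigma_i)\}$, closing the induction.

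The main obstacle will be the dimension bookkeeping when applying the inductive hypothesis to $\Delta$ and $\Lambda$: one must handle the case $\dim\Delta < i$ (where no $i$-dimensional critical pairs exist in $\Delta$ and $\Sud_{i-1}(\Delta)$ is entirely zero) so that both the spectrum term and the signature term contribute nothing, and similarly track that long signatures always have $T \neq \emptyset$ so that the $z$-polynomials appearing in the inductive hypothesis are automatically nonzero and correctly match the ``nonzero'' side conditions of Theorem~\ref{shifted-recurrence}. Once this correspondence is unwound, the computation is a clean comparison of two recurrences with identical shape.
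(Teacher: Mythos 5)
Your proposal is correct and is essentially the paper's own argument: the paper proves Theorem~\ref{zpoly-theorem} precisely by observing that the recurrences of Theorem~\ref{shifted-recurrence} and Corollary~\ref{allsg-shifted} are identical, and your induction on the number of vertices (with the dimension bookkeeping for $\del_p\Sigma_{[i]}$ and the observation that long signatures have $T\neq\0$, hence nonzero $z$-polynomials) is just the explicit unwinding of that comparison.
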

\setcounter{section}{\value{savesection}}
\setcounter{theorem}{\value{savetheorem}}

\begin{proof}
Simply note that the recursions in Theorem~\ref{shifted-recurrence} and Corollary~\ref{allsg-shifted}
are identical.
\end{proof}

One corollary to Theorem~\ref{zpoly-theorem} is that you can ``hear the shape'' of a shifted complex (Corollary~\ref{fine-hearing}), but only if your ears are fine enough (Remark~\ref{coarse-not-hearing}).

\begin{corollary}\label{fine-hearing}
A shifted complex $\Sigma^d$ is completely determined by its spectra $\{\Sud_{i-1}(\Sigma)\}_{i=0}^d$.
\end{corollary}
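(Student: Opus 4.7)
The plan is to invert Theorem~\ref{zpoly-theorem} and then induct on $\abs{V(\Sigma)}$, peeling off the initial vertex $p$ via the near-cone recurrence of Corollary~\ref{allsg-shifted}.

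First, the vertex $p=\min V(\Sigma)$ can be read directly from the data: the $1\times 1$ matrix $\LLud_{\Sigma,-1}$ has single entry $\sum_{v\in V}X_{d+1,v}$, so $\Sud_{-1}(\Sigma)$ determines $V$ and hence $p$. Next, Theorem~\ref{zpoly-theorem} yields $\Sud_{i-1}(\Sigma)\zeq\Up^{d-i}\{z(S,T)\st (S,T)\in\setlsg(\Sigma_i)\}$, and inverting the raising operator (which is well-defined on its image) recovers the multiset $\{z(S,T)\st (S,T)\in\setlsg(\Sigma_i)\}$ for each $i$. I then claim that $(S,T)\mapsto z(S,T)$ is injective on long signatures. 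Writing $z(S,T)=\tfrac{1}{\Up X_S}\sum_{j\in T}X_{S\cup j}$, the crux is that no factor $X_{i+1,s_i}$ of $\Up X_S$ divides the numerator in its entirety: the summand for $j=\max T$ (which exceeds $\max S\geq s_i$) sorts $j$ strictly after $s_i$ in $S\cup\{j\}$ and therefore does not carry the factor $X_{i+1,s_i}$. Hence numerator and denominator are coprime, the reduced denominator is exactly $\Up X_S$ (which uniquely encodes $S$), and each monomial of the numerator then uniquely recovers its $j\in T$. The multisets $\setlsg(\Sigma_i)$ are therefore determined.

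To recover $\Sigma$ itself I induct on $\abs{V(\Sigma)}$; the base case with at most one vertex is immediate. For the inductive step, set $\Delta=\del_p\Sigma$ and $\Lambda=\link_p\Sigma$, both shifted complexes on $[p+1,q]$. Corollary~\ref{allsg-shifted} partitions each $\setlsg(\Sigma_i)$ into three classes mutually distinguished by the shape of $(S',T')$ alone: (c) the long signature $(\0,\{p\})$, the unique type with $\abs{T'}=1$, whose multiplicity is $\betti_{i-1}(\Delta)$; (a) those with $p\notin S'$ and $\abs{T'}\geq 2$, arising as $(S,T\cup\{p\})$ for $(S,T)\in\setlsg(\Delta_i)$; and (b) those with $\min S'=p$, arising as $(S\cup\{p\},T\cup\{p\})$ for $(S,T)\in\setlsg(\Lambda_{i-1})$. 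Stripping the extra $\{p\}$'s off the type-(a) and type-(b) entries yields the multisets $\setlsg(\Delta_i)$ and $\setlsg(\Lambda_{i-1})$ for every $i$, which by the injectivity result above determine the full families of spectra of $\Delta$ and of $\Lambda$. The inductive hypothesis then forces $\Delta$ and $\Lambda$ to be uniquely identified, and since $\Sigma$ is the near-cone $\Delta\cup\{F\cup\{p\}\st F\in\Lambda\}$ with apex $p$, $\Sigma$ itself is reconstructed.

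The main obstacle is the injectivity of $z$ on long signatures; once that is established, the rest of the argument is combinatorial bookkeeping guided by Corollary~\ref{allsg-shifted}.
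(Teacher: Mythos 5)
Your argument is correct, and its first half coincides with the paper's: both use Theorem~\ref{zpoly-theorem} to pass from the spectra to the multisets of $z$-polynomials, and both need that $(S,T)$ is recoverable from $z(S,T)$. Here you actually supply more detail than the paper (which just says ``it is easy to see''): your coprimality argument --- the summand $X_{S\cup j}$ for $j=\max T>\max S$ is divisible by no variable $X_{k+1,s_k}$ of $\Up X_S$, so the reduced denominator is exactly $\Up X_S$ and the numerator's monomials recover $T$ --- is a clean justification of that step. Where you genuinely diverge is in the reconstruction of $\Sigma$ from the long signatures. The paper is non-inductive: it observes that every (short) signature is a face of $\Sigma$, that every $\cpleq$-maximal face $F=\{v_1<\cdots<v_i\}$ occurs as the signature of the critical pair $(F,\,F\sm\{v_i\}\cup\{v_i+1\})$, and hence that $\Sigma$ is exactly the order ideal $\bigcup_{F\in\sg(\Sigma)}\{G\st G\cpleq F\}$ generated by the signatures. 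You instead induct on the number of vertices, using the shape of $(S',T')$ (whether $p\in S'$, and whether $\abs{T'}=1$) to split $\setlsg(\Sigma_i)$ according to Corollary~\ref{allsg-shifted} into the contributions of $\del_p$, $\link_p$, and the trivial signatures, recovering $\setlsg(\Delta_i)$ and $\setlsg(\Lambda_{i-1})$ and reassembling $\Sigma=\Delta\cup\{F\cup\{p\}\st F\in\Lambda\}$; your case analysis of the three classes is right (type (a) always has $\abs{T'}\geq 2$ because the initial vertex of $\Delta_i$ is $p+1$, and type (b) always has $p\in S'$). The paper's route is shorter and avoids induction altogether; yours costs more bookkeeping but makes the recursive structure explicit and dovetails with the machinery already built for Corollary~\ref{allsg-shifted}. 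One small repair: what you recover for $\Delta$ and $\Lambda$ are their \emph{nonzero} eigenvalues (the spectra only up to $\zeq$), not the ``full families of spectra,'' since the multiplicities of zero are not visible from the long signatures alone; so the statement you induct on should be strengthened to ``a shifted complex is determined by the multisets of its nonzero eigenvalues'' (equivalently, by the multisets $\setlsg(\Sigma_i)$), which is exactly what your argument proves and which implies the corollary as stated.
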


\begin{proof}
By Theorem~\ref{zpoly-theorem}, the spectra $\{\Sud_{i-1}(\Sigma)\}_{i=0}^d$ 
determine the $z$-polynomials $z(S,T)$,
and it is easy to see that the long signature $(S,T)$ can be recovered from $z(S,T)$.
Furthermore the (short) signature is even more easily recovered from the long signature.
When $F=\{v_1<\ldots<v_i\}$ is a face of $\Sigma$ that is $\cpleq$-maximal, then 
$(F, F \sm \{v_i\} \dju \{v_{i+1}\})$ is a critical pair, with (short) signature $F$.
Thus, among all the (short) signatures of $\Sigma$, we will find all $\cpleq$-maximal faces.
Furthermore, every (short) signature is a face of $\Sigma$, by definition.  Thus, the union
$\cup_{F \in \sg(\Sigma)} \{G\colon G \cpleq F\}$
of all $\cpleq$-order ideals will yield all the non-empty faces of $\Sigma$.
\end{proof}

In fact, if $\Sigma^d$ is a \emph{pure} shifted complex, then
it is determined uniquely by its top Laplacian spectrum $\Sud_{d-1}(\Sigma)$.

Specializing (or ``coarsening'') the algebraic fine weighting, we obtain as another 
corollary to Theorem~\ref{zpoly-theorem} Duval and Reiner's description of the unweighted 
Laplacian eigenvalues of a shifted complex \cite[Thm.~1.1]{DR}, as we now explain.

The \emph{coarse weighting} is obtained from the algebraic fine weighting by
omitting all first subscripts, i.e., replacing $x_{i,j}=x_j$ and $X_{i,j}=X_j$.  (Thus the
monomial corresponding to a facet or set of facets records the degree of
each vertex, but forgets the information about the order of vertices in facets.)
Note that if $T=[1,t]$, then every $z$-polynomial $z(S,T)$ specializes to 
the linear form~$E_t=X_1 + \cdots + X_t$ in the coarse weighting.

For a partition~$\lambda$ (a weakly decreasing list of positive integers), 
let $E_{\lambda}$ be the multiset in which each part~$i$ of 
$\lambda$ is replaced by~$E_i$.  
Recall that the \emph{conjugate} of
$\lambda$ is the partition $\lambda'$ in which each part~$t$ occurs with
multiplicity $\lambda_t - \lambda_{t+1}$.

\begin{corollary}\label{Swud-shifted}
Let $\Sigma^d$ be a shifted simplicial complex on vertices~$[n]$.  Then
  $$
  \Swud_{d-1}(\Sigma) \zeq E_{(\deg_{\Sigma_d})'}
  $$
where the left-hand side denotes the multiset of coarsely weighted Laplacian eigenvalues, and the right-hand side is the conjugate of the partition
$(\deg_{\Sigma_d}(1),\dots,\deg_{\Sigma_d}(n))$.
\end{corollary}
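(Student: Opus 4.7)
The plan is to deduce Corollary~\ref{Swud-shifted} as a direct specialization of Theorem~\ref{zpoly-theorem} at $i = d$, combined with the signature-counting formula of Proposition~\ref{degree-signature}. Since $\Sigma$ is shifted on $[n]$ and $d$-dimensional, $\{1,2,\dots,d+1\}\in\Sigma_d$, so the initial vertex of $\Sigma$ (and of $\Sigma_{[d]}$) is $p=1$. Applying Theorem~\ref{zpoly-theorem} with $i=d$ gives
\[
\Sud_{d-1}(\Sigma) \;\zeq\; \{z(S,T)\st (S,T)\in\setlsg(\Sigma_d)\},
\]
with no outer $\Up$ to worry about.

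Next I would analyze what happens to a $z$-polynomial under coarsening. In the coarse weighting, $x_{i,j}=x_j$ and $X_{i,j}=X_j$, so the raising operator $\Up$ acts as the identity and $\Up X_S = X_S$. Consequently,
\[
z(S,T) \;=\; \frac{1}{X_S}\sum_{j\in T} X_{S\cup j} \;=\; \sum_{j\in T} X_j.
\]
For a long signature $(S,T)=\lsg(A,B)$ with $A=\{a_1<\cdots<a_{d+1}\}$ and $a_i$ the unique incremented coordinate, we have $T=[1,a_i]$, so $z(S,T)$ specializes precisely to $E_{a_i}=X_1+\cdots+X_{a_i}$. Note that $a_i$ is the largest element of the short signature $\sg(A,B)$.

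Now I would count multiplicities. By Proposition~\ref{degree-signature} applied to the family $\family=\Sigma_d$, the number of critical pairs of $\Sigma_d$ whose signature has greatest element equal to $v$ is $\deg_{\Sigma_d}(v)-\deg_{\Sigma_d}(v+1)$ (with the convention $\deg_{\Sigma_d}(n+1)=0$). Combining this with the previous paragraph, the multiplicity of $E_v$ in $\Swud_{d-1}(\Sigma)$ is exactly $\deg_{\Sigma_d}(v)-\deg_{\Sigma_d}(v+1)$. Finally, because $\Sigma$ is shifted the sequence $\lambda=(\deg_{\Sigma_d}(1),\dots,\deg_{\Sigma_d}(n))$ is weakly decreasing, hence a partition, and by the definition recalled immediately before the corollary, each part $v$ of the conjugate partition $\lambda'$ occurs with multiplicity $\lambda_v-\lambda_{v+1}=\deg_{\Sigma_d}(v)-\deg_{\Sigma_d}(v+1)$. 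Thus $E_{\lambda'}$ contains $E_v$ with the same multiplicity, establishing the $\zeq$-equality.

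There is no real obstacle here beyond bookkeeping; the most delicate point is simply checking that the coarsening of a $z$-polynomial $z(S,T)$ depends only on $\max T$ (which equals the largest element of the associated signature), so that Proposition~\ref{degree-signature} applies cleanly. Once that identification is in hand, the corollary follows without further computation.
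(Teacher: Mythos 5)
Your proposal is correct and follows essentially the same route as the paper: specialize Theorem~\ref{zpoly-theorem} at $i=d$, observe that under the coarse weighting each $z(S,T)$ with $T=[1,t]$ collapses to $E_t$ where $t=\max\sg(A,B)$, count multiplicities via Proposition~\ref{degree-signature}, and finish with the definition of the conjugate partition. The only addition beyond the paper's argument is your explicit bookkeeping (initial vertex $p=1$, $\Up$ trivializing under coarsening, monotonicity of the degree sequence), all of which is sound.
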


\begin{proof}
Theorem~\ref{zpoly-theorem} and the preceding discussion imply that
$\Swud_{d-1}(\Sigma)$ is $\zeq$-equivalent to the multiset in which
$E_t$ occurs with multiplicity
equal to the number of critical pairs
$(A,B)$ of $\Sigma_d$ such that $t = \max\sg(A,B)$.
By Proposition~\ref{degree-signature}, that multiplicity is
$\deg_{\Sigma_d}(t) - \deg_{\Sigma_d}(t+1)$.  The result now follows from
the definition of conjugate partition.
\end{proof}

Passing to the unweighted setting by setting $x_i=1$ for all~$i$ recovers
the theorem of Duval and Reiner~\cite[Thm.~1.1]{DR}, which states that
the Laplacian eigenvalues of a shifted complex
$\Sigma$ are given by the conjugate of the partition $\deg_{\Sigma_d}$.

\begin{remark}\label{coarse-not-hearing}
Duval and Reiner also showed \cite[Example~10.2]{DR} that there are two non-isomorphic
2-dimensional shifted complexes with the same degree sequence.  Corollary~\ref{Swud-shifted}
then shows that, in contrast to Corollary~\ref{fine-hearing}, the coarsely-weighted
eigenvalues are not enough to determine a shifted complex.
\end{remark}
\subsection{An example: the equatorial bipyramid}
\label{sec:example}

To illustrate Theorems~\ref{shifted-recurrence} and~\ref{zpoly-theorem}, we calculate the top-dimensional 
up-down Laplacian spectrum of the bipyramid $B=\shgen{235}$
(see Examples~\ref{ex:bipyramid},~\ref{ex:shifted235} and~\ref{ex:weightedsst235}).
In our recursive calculation, we will encounter the subcomplexes $B_2,\dots,B_7$
of $B=B_1$ shown in the following figure, and $B_8$, the simplicial complex whose
only face is the empty face.

\begin{center}
\resizebox{5.0in}{1.3in}{\includegraphics{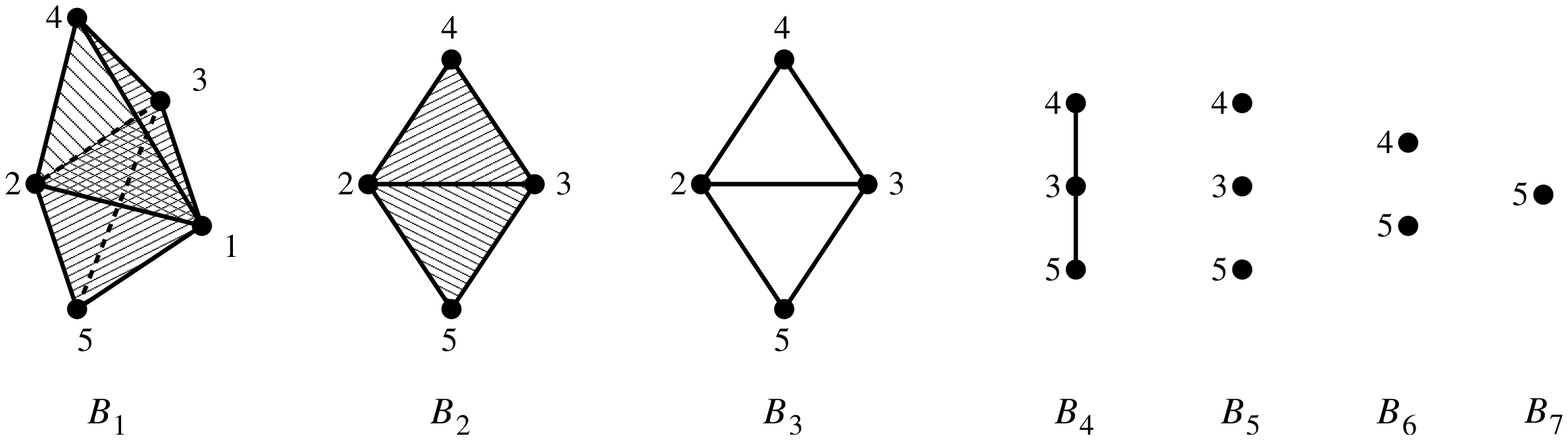}}
\end{center}

Observe that
\begin{itemize}
\item $B_1$ is a near-cone with apex 1, $\del_1 B_1=B_2$, and $\link_1 B_1=B_3$;
\item $B_2 = 2*B_4$;
\item $B_3$ is a near-cone with apex 2, $\del_2 B_3=B_4$, and $\link_2 B_3=B_5$;
\item $B_4 = 3*B_6$;
\item $B_5$ is a near-cone with apex 3, $\del_3 B_5 = B_6$, and $\link_3 B_5 = B_8$;
\item $B_6$ is a near-cone with apex 4, $\del_4 B_6 = B_7$, and $\link_4 B_6 = B_8$; and
\item $B_7 = 5*B_8$.
\end{itemize}

The critical pairs, signatures, and nonzero top-dimensional Laplacian eigenvalues of the bipyramid $B=B_1$ and its subcomplexes $B_2,\dots,B_7$ are listed in the following table.  This information can be obtained either recursively (using Theorem~\ref{shifted-recurrence} repeatedly) or bijectively (from Theorem~\ref{zpoly-theorem}).

$$
  \begin{array}{cccccc}
    \text{Subcomplex} & \text{Dimension} & \text{Vertices} & \text{Critical pairs} & \text{Signatures} & \text{Eigenvalues} \\ 
    \hline
    B_7 & 0 & \{5\} &
      (5,6) & 5 & z(\0,5)\\  \hline
    B_6 & 0 & \{4,5\} &
      (5,6) & 5 & z(\0,45)\\ \hline
    B_5 & 0 & \{3,4,5\} &
      (5,6) & 5 & z(\0,345)\\ \hline
    B_4 & 1 & \{3,4,5\} &
      (35,45) & 3  & z(\0,3)\\ &&&
      (35,36) & 35 & z(3,345)\\ \hline
    B_3 & 1 & \{2,3,4,5\} &
      (25,26) & 25 & z(2,2345) \\ &&&
      (35,36) & 35 & z(3,2345) \\ &&&
      (35,45) & 3  & z(\0,23) \\ \hline
    B_2 & 2 & \{2,3,4,5\} &
      (235,236) & 235 & z(23,2345)\\ &&&
      (235,245) & 23  & z(2,23)\\ \hline
    B_1 & 2 & \{1,2,3,4,5\} &
      (125,126) & 125 & z(12,12345) \\ &&&
      (135,136) & 135 & z(13,12345) \\ &&&
      (135,145) & 13  & z(1,123) \\ &&&
      (235,236) & 235 & z(23,12345) \\ &&&
      (235,245) & 23 & z(2,123) \\ 
    \hline
  \end{array}
$$

We also see from the above table that the coarsely-weighted eigenvalues $\Swud_1(B_1)$ are 
$\zeq$-equivalent to $E_5, E_5, E_5, E_3, E_3$ (each $E_5$ coming from a $z(S,12345)$ 
in $B_1$, and each $E_3$ coming from a $z(S,123)$ in $B_1$), corresponding to the transpose
of the degreee sequence of the facets, $55533$.  (In this case, both the degree sequence
and its transpose are $55533$.)

\section{Enumerating spanning trees of shifted complexes}
\label{count-shifted-section}

We now translate Theorem~\ref{zpoly-theorem} from the algebraic to the
combinatorial fine weighting, in order to obtain a factorization of the
finely weighted spanning tree enumerator~$\hat\tau(\Sigma)$ of a
shifted complex~$\Sigma$.

Recall that the long signature~$\setlsg(\family)$ of a family~$\family$
is the multiset of long signatures of its critical pairs.

\setcounter{savesection}{\value{section}}
\setcounter{section}{\value{intro-section-counter}}
\setcounter{savetheorem}{\value{theorem}}
\setcounter{theorem}{\value{shifted-counter}}
\begin{theorem}
Let $\Sigma^d$ be a shifted complex with initial vertex
$p$.  Then:
\begin{align}
\hat{\tau}_d(\Sigma) 
    &= \left( \prod_{ F \in \Lambda_{d-1}} X_{\tilde F} \right)
        \left(  \prod_{(S,T) \in \setlsg(\Delta_d)} \frac{z(S,\tilde T)}{X_{1,p}}  \right)
    \label{h-shifted-z}\\
    &= \left( \prod_{ F \in \Lambda_{d-1}} X_{\tilde F} \right)
        \left( \prod_{(S,T) \in \setlsg(\Delta_d)} 
        \frac{\sum_{j \in \tilde T} X_{S \cup j}}{X_{\tilde S}} \right)
    \label{h-shifted}
\end{align}
where $\tilde F = F \cup p,\ \Delta = \del_p \Sigma$, and $\Lambda = \link_p \Sigma$.
\end{theorem}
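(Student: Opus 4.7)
The plan is to combine Lemma~\ref{reduce-to-evals} with Theorem~\ref{zpoly-theorem} and then translate the resulting expression using the monomial identities of Section~\ref{shifted-section}. Concretely, Lemma~\ref{reduce-to-evals} (applied to $\Sigma$, which we may assume is pure of dimension $d$) expresses $\hat\tau_d(\Sigma)$ as the prefactor $\prod_{F\in\Lambda_{d-1}}\Up X_F$ times the product $\prod_{\lambda\in\Sud_{d-1}(\Delta)}(X_{1,p}+\lambda)$, where $\Delta=\del_p\Sigma$. Since the deletion of a shifted complex is again shifted, Theorem~\ref{zpoly-theorem} applies to $\Delta$; taking $i=d$ (so that $\Up^{d-i}$ is the identity), it identifies the nonzero elements of $\Sud_{d-1}(\Delta)$ with $\{z(S,T)\st (S,T)\in\setlsg(\Delta_d)\}$ and accounts for the remaining $f_{d-1}(\Delta)-|\setlsg(\Delta_d)|$ eigenvalues as zero.

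The next step is to apply equation~\eqref{del-recurrence} at $i=d$, which reads $X_{1,p}+z(S,T)=z(S,\tilde T)$, to each nonzero factor. The Laplacian product then becomes
\[
X_{1,p}^{f_{d-1}(\Delta)-|\setlsg(\Delta_d)|}\prod_{(S,T)\in\setlsg(\Delta_d)}z(S,\tilde T).
\]
To incorporate the prefactor, I use the identity $\Up X_F = X_{\tilde F}/X_{1,p}$, which is the $X$-version of~\eqref{tilde:monom} at $a=0$. Crucially, $|\Lambda_{d-1}|=f_{d-1}(\Delta)$; this equality is established in the opening of the proof of Lemma~\ref{reduce-to-evals} and reflects the near-cone structure of $\Sigma$, since every $(d-1)$-face of $\Delta$ joins with $p$ to yield a facet of $\Sigma$. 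With this identification, the two exponents of $X_{1,p}$ combine to $-|\setlsg(\Delta_d)|$, producing the first form~\eqref{h-shifted-z}. The second form~\eqref{h-shifted} then follows by expanding $z(S,\tilde T)$ via its definition~\eqref{z-poly} and absorbing one factor of $X_{1,p}$ into $\Up X_S$ to obtain $X_{\tilde S}$, again using~\eqref{tilde:monom}.

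I do not foresee any substantial obstacle: the deep work resides in Lemma~\ref{reduce-to-evals} and Theorem~\ref{zpoly-theorem}, and what remains is a careful translation. The main care needed is tracking the exponents of $X_{1,p}$ correctly, which rests on the equality $|\Lambda_{d-1}|=f_{d-1}(\Delta)$ valid for pure shifted complexes.
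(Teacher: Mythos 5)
Your proposal is correct and follows essentially the same route as the paper's proof: reduce to the pure case, combine Lemma~\ref{reduce-to-evals} with Theorem~\ref{zpoly-theorem} (counting the $|\Lambda_{d-1}|-|\setlsg(\Delta_d)|$ zero eigenvalues), then apply \eqref{del-recurrence} together with \eqref{X-ident}/\eqref{tilde:monom} to obtain \eqref{h-shifted-z} and \eqref{h-shifted}. The only difference is cosmetic bookkeeping of the $X_{1,p}$ exponents (you divide the prefactor, the paper redistributes $X_{1,p}^{|\Lambda_{d-1}|}$ across the two products), which is algebraically identical.
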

\setcounter{section}{\value{savesection}}
\setcounter{theorem}{\value{savetheorem}}

\begin{proof}
Since the complex $\Sigma$ is APC, its spanning trees are precisely those
of its pure $d$-skeleton $\Sigma_{[d]}$.  
Similarly, passing from $\Sigma$ to $\Sigma_{[d]}$ does not affect $\Delta_d$ 
or $\Lambda_{d-1}$.  Therefore, we may assume without loss of generality
that $\Sigma$ is pure.

By Lemma~\ref{reduce-to-evals} and Theorem~\ref{zpoly-theorem}, we have
$$
\hat{\tau}_d(\Sigma) = 
         \left(\prod_{ F\in\link_p\Sigma} \Up X_F\right)
         \left(\prod_{(S,T) \in \setlsg(\Delta_d)} (X_{1,p} + z(S,T))\right)X_{1,p}^m
$$
where $m$ is the number of zero eigenvalues of $\LLud_{\Delta,d-1}$.  Since $\LLud_{\Delta,d-1}$ acts on $\Delta_{d-1}$, but has $\setlsg(\Delta_d)$ nonzero eigenvalues (including multiplicity), $m = \abs{\Delta_{d-1}} - \abs{\setlsg(\Delta_d)} = \abs{\Lambda_{d-1}} - \abs{\setlsg(\Delta_{d-1})}$, since $\Delta_{d-1} = \Lambda_{d-1}$ by equation~\eqref{Delta-Lambda}.  Thus, 
  \begin{align*}
  \hat \tau_d(\Sigma) &= X_{1,p}^{\abs{\Lambda_{d-1}}}
    \left(\prod_{ F \in \Lambda_{d-1}}(\Up X_{ F}) \right) X_{1,p}^{-\abs{\setlsg(\Delta_d)}}
    \left(\prod_{(S,T) \in \setlsg(\Delta_d)} X_{1,p} + z(S,T)\right) \\
  &= \prod_{ F \in \Lambda_{d-1}} (X_{1,p} \Up X_{ F})
    \prod_{(S,T) \in \setlsg(\Delta_d)} \frac{X_{1,p} + z(S,T)}{X_{1,p}}.
\end{align*}
Equation~\eqref{h-shifted-z} now follows from
equations~\eqref{X-ident} and~\eqref{del-recurrence}.  
Equation~\eqref{h-shifted} then follows from
the definition of $z$-polynomial~\eqref{z-poly} and from~\eqref{X-ident} again, because
$$
 \frac{z(S,\tilde T)}{X_{1,p}}
 =  \frac{1}{X_{1,p}}\frac{\sum_{j \in \tilde T} X_{S \cup j}}{\Up X_{S}}
 = \frac{\sum_{j \in \tilde T} X_{S \cup j}}{X_{\tilde S}}.
$$
\end{proof}

\begin{example}\label{ex:fine-trees-bipyramid}
We return to our running example, the equatorial bipyramid $B$.  Here $d=2$ and $p=1$, and in the notation of Section~\ref{sec:example}, we have $\Delta=B_2$ and  $\Lambda=B_3$.
Moreover, $\setlsg(\Delta_d) = \{(2,23),(23,2345)\}$.
Hence equation~\eqref{h-shifted} yields
$$
\hat\tau(B) = X_{123}X_{124}X_{134}X_{125}X_{135} \left(\frac{X_{12} + X_{22} + X_{23}}{X_{12}}\right)
\left(\frac{X_{123} + X_{223} + X_{233} + X_{234} + X_{235}}{X_{123}}\right).
$$
Note that this is a genuine polynomial (not just a Laurent polynomial) in the indeterminates $X_{i,j}$.
\end{example}

\begin{corollary}\label{coarse-shifted}
Let $\Sigma^d$ be a shifted complex with initial vertex~$1$.
Let $\Delta = \del_1 \Sigma$, $\tilde\Delta = 1 * \Delta$, 
$\Lambda = \link_1 \Sigma$, and $\tilde\Lambda = 1 * \Lambda$.
Then, in the coarse weighting, 
$$
\hat \tau_d(\Sigma) = X^{\deg \tilde\Lambda_d} \prod_i 
          ( E_i/X_1)^{(\deg \tilde\Delta_{d+1})'_i},
$$
where $E_i = X_1 + \cdots + X_i$ and, for a partition $\lambda$, we set $X^\lambda := \prod_i X_i^{\lambda_i}$.
\end{corollary}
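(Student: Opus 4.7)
I plan to derive the Corollary by specializing equation~\eqref{h-shifted} of Theorem~\ref{shifted-theorem} under the coarse weighting $x_{i,j}\mapsto x_j$, then matching the two resulting factors to the two factors on the right-hand side of the Corollary.

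For the monomial prefactor $\prod_{F\in\Lambda_{d-1}} X_{\tilde F}$, the coarse weighting makes each $X_{\tilde F}$ into the ordinary product of the $X_v$ over $v\in\tilde F$. Assembling these over $F$ and using the bijection $F\leftrightarrow\tilde F = F\cup\{1\}$ between $\Lambda_{d-1}$ and $\tilde\Lambda_d$ produces exactly $X^{\deg\tilde\Lambda_d}$.

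The heart of the argument concerns the product over $\setlsg(\Delta_d)$. The key observation is that $\Delta=\del_1\Sigma$ is shifted with initial vertex~$2$, so every long signature $(S,T)\in\setlsg(\Delta_d)$ has $T = [2,\max T]$ as an initial integer interval, and thus $\tilde T = [1,\max T]$. Consequently, in coarse weighting the generic ratio $\frac{\sum_{j\in\tilde T}X_{S\cup j}}{X_{\tilde S}}$ collapses to the single linear form $E_{\max T}/X_1$. It then remains to count how often each $E_i/X_1$ appears. Regrouping by the value of $\max T$, Proposition~\ref{degree-signature} (applied to the shifted family $\Delta_d$, and noting $\max T = \max\sg(A,B)$ for the associated critical pair) yields multiplicity $\deg_{\Delta_d}(i) - \deg_{\Delta_d}(i+1)$ for each vertex $i\geq 2$.

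To match this count to $(\deg\tilde\Delta_{d+1})'_i$, I will use that $\tilde\Delta_{d+1}$ is in degree-preserving bijection with $\Delta_d$ at vertices $v\geq 2$ (via $F\mapsto F\cup\{1\}$), so the multiplicity above equals $\deg_{\tilde\Delta_{d+1}}(i) - \deg_{\tilde\Delta_{d+1}}(i+1)$. The classical conjugate-partition identity $\mu_i - \mu_{i+1} = |\{k\colon\mu'_k = i\}|$ then identifies this with the multiplicity of $i$ as a part of $(\deg\tilde\Delta_{d+1})'$. The only subtlety is the $i=1$ case, where no critical pair can contribute (vertex~$1$ is absent from $\Delta$) but the conjugate-partition multiplicity may be nonzero; this is benign because $E_1 = X_1$, so $E_1/X_1 = 1$ absorbs any such discrepancy. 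The main obstacle is therefore purely notational: carefully tracking the parallel indexings of short signatures, long signatures, vertex-degree differences in $\Delta_d$ versus $\tilde\Delta_{d+1}$, and multiplicities of parts of the conjugate partition.
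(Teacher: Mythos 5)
Your proposal is correct and follows essentially the same route as the paper: specialize equation~\eqref{h-shifted} to the coarse weighting, note that each factor collapses to $E_{\max T}/X_1$ because $\tilde T$ is the initial interval $[1,\max T]$, and count multiplicities via Proposition~\ref{degree-signature} together with the definition of the conjugate partition. The only (harmless) bookkeeping difference is that the paper transfers the count to $\tilde\Delta_{d+1}$ by lifting critical pairs through Proposition~\ref{critpairs-recurrence}, with the extra $\{1\}$-signatures contributing $(X_1/X_1)^m$, whereas you apply Proposition~\ref{degree-signature} directly to $\Delta_d$ and move the degree differences to $\tilde\Delta_{d+1}$ via the degree-preserving cone bijection, absorbing the part-$1$ discrepancy with $E_1/X_1=1$.
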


\begin{proof}
Upon coarsening the weighting, the first product in~\eqref{h-shifted} becomes
$X^{\deg \tilde\Lambda_d}$ and the second product becomes
  $$
  \prod_{(S,T)\in \setlsg(\Delta_d)} \frac{\sum_{j \in \tilde T}X_j}{X_1}
  = \prod_{(S,T)\in \setlsg(\Delta_d)} E_{\abs{T}+1}/X_1.
  $$
We now claim that
  $$
  \prod_{(S,T)\in \setlsg(\Delta_d)} E_{\abs{T}+1}/X_1
  = \prod_{(S,T)\in \setlsg(\tilde\Delta_{d+1})} E_{\abs{T}}/X_1.
  $$
Indeed, by Proposition~\ref{critpairs-recurrence}, 
  $$
  \setsg(\tilde\Delta_{d+1}) = \{1 \dju F \colon F \in \setsg(\Delta_d)\} \cup \{1\}^m
  $$
for some $m$, since $\Delta = \link_1(\tilde\Delta)$ and 
$\dim(\del_1(\tilde\Delta)) = \dim \Delta < d+1$.  It now follows that 
  $$
  \prod_{(S,T)\in \setlsg(\tilde\Delta_{d+1})} E_{\abs{T}}/X_1
  = \prod_{(S,T)\in \setlsg(\Delta_d)} (E_{\abs{T}+1}/X_1) (X_1 / X_1)^m,
  $$
implying the claim.
Finally, by Proposition~\ref{degree-signature} and the definition of conjugate partition,
we obtain
  $$
  \prod_{(S,T)\in \setlsg(\tilde\Delta_{d+1})} E_{\abs{T}}/X_1
  = \prod_t (E_t / X_1)^{\deg_{\tilde\Delta_{d+1}}(t) - \deg_{\tilde\Delta_{d+1}}(t+1)}
  = \prod_i ( E_i/ X_1)^{(\deg \tilde\Delta_{d+1})'_i}.
  $$
\end{proof}

\begin{example}\label{ex:coarse-trees-bipyramid}
Once again, let $B$ be the equatorial bipyramid.  Here $\tilde\Lambda_d=\{123, 124, 125, 134, 135\}$.
Its degree sequence is~53322.  Thus the monomial factor in Corollary~\ref{coarse-shifted} is
  $$
  X_{123}X_{124}X_{125}X_{134}X_{135} = X_1^5 X_2^3 X_3^3 X_4^2 X_5^2.
  $$
Meanwhile, $\tilde\Delta_{d+1}=\{1234, 1235\}$.  Its degree sequence is~22211, with conjugate~53.  The product factor in Corollary~\ref{coarse-shifted} is therefore
  $$
  (E_5/X_1)(E_3/X_1) = (X_1 + X_2 + X_3 + X_4 + X_5)(X_1 + X_2 + X_3)/X_1^2.
  $$
Putting these terms together yields
  $$
  \hat \tau_d(\Sigma) = X_1^3 X_2^3 X_3^3 X_4^2 X_5^2
  (X_1 + X_2 + X_3 + X_4 + X_5) (X_1 + X_2 + X_3)
  $$
which matches Example~\ref{ex:weightedsst235}.
\end{example}

\section{Corollaries} \label{corollary-section}

We conclude by showing how several known tree enumerators---for skeletons
of simplices, threshold graphs, and Ferrers graphs---can be recovered from our results.

\subsection{Skeletons of simplices}
Let $\Sigma$ be the $d$-skeleton of the simplex on vertices~$[n]$,
so that the set of facets of $\Sigma$ is $\binom{[n]}{d+1}$,
the set of all subsets of $[n]$ of cardinality~$d+1$.  Note that
$\Sigma$ is generated as a shifted complex by the single facet $[n-d,n]$.
The critical pairs of $\Sigma$ are
  $$\left\{ (A\cup\{n\},\; A\cup\{n+1\}) \st A\in\binom{[n-1]}{d} \right\}$$
and the corresponding long signatures are
  $$\setlsg(\Sigma_d) = \left\{ (A,\,[n]) \st A\in\binom{[n-1]}{d} \right\}.$$
Setting $\Lambda = \link_1\Sigma$ and $\Delta=\del_1\Sigma$, we have
  \begin{align*}
  \Lambda_{d-1} &= \binom{[2,n]}{d};\\
  \Delta_d &= \binom{[2,n]}{d+1};\\
  \setlsg(\Delta_d) &= \left\{ (B,\,[2,n]) \st B\in\binom{[2,n-1]}{d} \right\}.
  \end{align*}
Applying equation~\eqref{h-shifted}, we obtain
  $$\hat \tau_d(\Sigma) = \left( \prod_{\substack{C\subseteq[2,n]\\|C|=d}} X_{\tilde C} \right)
    \left( \prod_{\substack{B\subseteq[2,n-1]\\\abs{B}=d}} \frac{\sum_{j=1}^n X_{B\cup j}}{X_{\tilde B}} \right).$$
The denominators in the second product cancel the factors $X_{\tilde C}$
in the first product with $n\not\in\tilde C$, leaving only those for which
$n\in\tilde C$.  Therefore,
  \begin{align*}
  \hat \tau_d(\Sigma) &= \left( \prod_{\substack{C\subseteq[2,n]\\n\in C\\\abs{C}=d}} X_{\tilde C} \right)
    \left( \prod_{\substack{B\subseteq[2,n-1]\\\abs{B}=d}} \sum_{j=1}^n X_{B\cup j} \right)
  &= \left( \prod_{\substack{C\subseteq[n]\\1,n\in C\\\abs{C}=d+1}} X_C \right)
    \left( \prod_{\substack{B\subseteq[2,n-1]\\\abs{B}=d}} z(B,[n]) \right).
  \end{align*}
Passing to the coarse weighting by setting $X_{i,j}=X_j$ for every $i,j$, we obtain
  \begin{align*}
  \hat \tau_d(\Sigma) &= \Big( (X_1X_n)^{\binom{n-2}{d-1}} (X_2\cdots X_{n-1})^{\binom{n-3}{d-2}} \Big)
    \Big( (X_2\cdots X_{n-1})^{\binom{n-3}{d-1}} (X_1+\cdots+X_n)^{\binom{n-2}{d}} \Big)\\
  &= (X_1\cdots X_n)^{\binom{n-2}{d-1}} (X_1+\cdots+X_n)^{\binom{n-2}{d}},
  \end{align*}
which is Theorem~$3'$ of Kalai's paper \cite{Kalai}.  Furthermore, setting $X_i=1$ for all~$i$ recovers
Kalai's generalization of Cayley's formula: $\tau_d(\Sigma)=n^{\binom{n-2}{d}}$.

\subsection{Threshold graphs}

A \emph{threshold graph} is a one-dimensional shifted complex~$\Sigma$.
For simplicity, we assume that the vertex set of~$\Sigma$ is~$[1,n]$.
We may also assume that~$\Sigma$ is connected, so that every vertex 
is adjacent to vertex~$1$.
Martin and Reiner \cite[Theorem 4]{MR2} found a factorization of the
combinatorially finely weighted spanning tree enumerator of~$\Sigma$, 
which may be stated in our notation\footnote{Note the distinction 
between the variable $X_{i,j}$, which corresponds to vertex $j$ as the $i$th smallest vertex in a face, 
and the quadratic monomial $X_{\{i,j\}}$, which corresponds to the edge $\{i,j\}$, and 
which equals $X_{1,i}X_{2,j}$ if $i \leq j$ but equals $X_{1,j}X_{2,i}$ if $j \leq i$.} as:
  \begin{equation} \label{JLM-VSR}
  \hat\tau(\Sigma) = X_{\{1,n\}} \prod_{v=2}^{n-1} \sum_{j=1}^{(\deg\Sigma)'_v} X_{\{v,j\}}.
  \end{equation}
A somewhat more general result was obtained independently by Remmel and
Williamson~\cite[Theorem 2.4]{RW}.  We will show how this formula can be
recovered from Theorem~\ref{shifted-theorem}.

The first product in equation~\eqref{h-shifted} is just~$X_{\{1,2\}}X_{\{1,3\}}\cdots X_{\{1,n\}}$.
For the second product, we must identify the critical pairs
of~$\Delta = \del_1 \Sigma$.  Note that~$\Delta$ is a threshold graph with vertices~$[2,n]$.

As is often the case with threshold graphs (and their degree sequences, which we will soon encounter), we need to sort vertices by their relation to the size of the \emph{Durfee square} of $\Sigma$, the largest square that fits in the Ferrers diagram of its degree sequence.  The side length~$m$ of the Durfee square is the largest number such that $\{m,m+1\}$ is an edge of $\Sigma$.  If $m=1$, then $\Sigma$ is a star graph, the Ferrers diagram of its degree sequence is a hook, and equations~\eqref{h-shifted} and~\eqref{JLM-VSR} both easily reduce to 
$$
\hat \tau_d(\Sigma) = \prod_{v=2}^n X_{\{1,v\}}.
$$
Therefore, we henceforth assume that $m \geq 2$.
Note that every edge has at least one endpoint~$\leq m$ (because $\{m+1,m+2\}\not\in\Sigma$,
and that edge is the unique $\cpleq$-minimal edge with both endpoints~$>m$).

For each vertex $v$ of $\Sigma$, let
  $$w(v) = \max \{u\colon \{u,v\} \in \Sigma\}.$$
Note that if $v\leq m$, then $\{v,m+1\}\cpleq\{m,m+1\}\in\Sigma$,
so $w(v)\geq m$.  On the other hand, if $v>m$, then $w(v)<v$.

\begin{lemma} \label{lemma:crit-pair-Delta}
The critical pairs of $\Delta = \del_1 \Sigma$ are as follows.

For each $v_1\in[2,m]$, there is a ``type~I'' critical pair $(\{v_1,v_2\},\{v_1,v_2+1\})$
where $v_2=w(v_1)$.

For each $v_2\in[m+2,w(2)]$, there is a ``type~II'' critical pair $(\{v_1,v_2\},\{v_1+1,v_2\})$,
where $v_1=w(v_2)$.

Furthermore, every critical pair is of one of these forms.
\end{lemma}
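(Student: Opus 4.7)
The plan is to enumerate critical pairs of $\Delta$ directly from the definition. Since $\Delta$ is a one-dimensional shifted complex, any critical pair has the form $(A,B)$ where $A=\{a_1<a_2\}$ is an edge of $\Delta$ and $B$ is obtained by incrementing exactly one coordinate: either $B=\{a_1,a_2+1\}$, or $B=\{a_1+1,a_2\}$ (the latter requiring $a_1+1<a_2$). These two possibilities will correspond to types~I and~II respectively.

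First I would record the \emph{prefix} structure of neighborhoods in a threshold graph: for every vertex $v$, the set of $u$ with $\{u,v\}\in\Sigma$ is exactly $\{u\in[1,n]\sm\{v\}\st u\leq w(v)\}$. This is immediate from the componentwise comparison $\{u,v\}\cpleq\{w(v),v\}$ together with the shifted condition. I would then record two consequences of the Durfee-square assumption $m\geq 2$: if $v\leq m$ then $w(v)\geq m+1$, proved via $\{v,m+1\}\cpleq\{m,m+1\}\in\Sigma$; and if $v>m$ then $w(v)\leq m$, because the $\cpleq$-minimal 2-subset of $[m+1,n]$ is $\{m+1,m+2\}\not\in\Sigma$, so no edge with both endpoints above~$m$ can lie in $\Sigma$.

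Next I would verify that the pairs described in the lemma are indeed critical pairs of $\Delta$. For type~I, with $v_1\in[2,m]$ and $v_2:=w(v_1)\geq m+1$, the edge $\{v_1,v_2\}$ lies in $\Delta$, while $\{v_1,v_2+1\}$ does not, by maximality of $w(v_1)$. For type~II, with $v_2\in[m+2,w(2)]$ and $v_1:=w(v_2)$, the bound $v_2\leq w(2)$ forces $\{2,v_2\}\in\Sigma$ and hence $v_1\geq 2$; on the other hand $v_1\leq m$ by the second preliminary fact, so $v_1+1\leq m+1<v_2$, which legitimizes the cover $\{v_1+1,v_2\}$. Maximality of $w(v_2)$ then shows $\{v_1+1,v_2\}\not\in\Sigma$.

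Finally I would prove exhaustion. Let $(A,B)$ be any critical pair of $\Delta$. If $B=\{a_1,a_2+1\}$, the prefix structure forces $a_2=w(a_1)$; the case $a_1>m$ is ruled out because then $w(a_1)\leq m<a_1<a_2$ would contradict $a_2=w(a_1)$, so $a_1\in[2,m]$ and we are in type~I. If instead $B=\{a_1+1,a_2\}$ with $a_1+1<a_2$, then $a_1=w(a_2)$; the case $a_2\leq m$ is ruled out as before, while the boundary case $a_2=m+1$ is ruled out because then $w(a_2)=m$ forces $a_1=m=a_2-1$, violating $a_1+1<a_2$. Hence $a_2\geq m+2$, and $a_2\leq w(2)$ follows from $\{2,a_2\}\cpleq\{a_1,a_2\}\in\Sigma$, placing us in type~II. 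The principal subtlety is precisely this exclusion of the boundary case $a_2=m+1$ in type~II, where the slack required for incrementing the smaller coordinate just barely fails to exist.
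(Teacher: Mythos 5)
Your proof is correct and follows essentially the same route as the paper: verify criticality of the listed pairs from the maximality of $w$, then exhaust by splitting on which coordinate of $A$ is incremented, identify $w(a_1)$ or $w(a_2)$, and pin down the ranges using the Durfee-square facts. The only cosmetic difference is how you force $a_2\geq m+2$ in type~II (excluding $a_2\leq m$ and $a_2=m+1$ separately via $w$, where the paper instead notes $\{a_2-1,a_2\}\cpmoreeq B\notin\Delta$ and invokes the definition of $m$), which is an equivalent argument.
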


\begin{proof}
It is immediate from the definition of $w(v)$ that each such pair
is critical.  Suppose now that $(A,B)$ is a critical pair of $\Delta$,
with $A=\{v_1<v_2\}$.  Then either $B=\{v_1,v_2+1\}$ or $B=\{v_1+1,v_2\}$.

If $B=\{v_1,v_2+1\}$, then we have already observed that $A$ has at least one
endpoint in $[2,m]$.  In particular, $v_1\leq m$, and the pair $(A,B)$ is of type~I.

If $B=\{v_1+1,v_2\}$,
then $A\in\Delta$ and $B\not\in\Delta$, so by definition $v_1=w(v_2)$.  Moreover,
$m+2 \leq v_2$ (because $v_1+1\leq v_2-1$, so $\{v_2-1,v_2\}\cpmoreeq B\not\in\Delta$)
and $v_2 \leq w(2)$ (because $\{2,v_2\} \cpleq A$, so $\{2,v_2\}\in\Delta$).
Hence the pair $(A,B)$ is of type~II.
\end{proof}

If $(A,B)$ is a critical pair of type~I, then
$\sg(A,B)=\{v_1,v_2\}$ and $\lsg(A,B)=(\{v_1\},[2,v_2])$.
If $(A,B)$ is a critical pair of type~II, then
$\sg(A,B)=\{v_1\}$ and $\lsg(A,B)=(\0,[2,v_1])$.
Therefore, formula \eqref{h-shifted} yields
\begin{align}
 \hat \tau_d(\Sigma) 
  &=
    \prod_{v=2}^n X_{\{1,v\}} 
    \prod_{v_1=2}^m \frac{\sum_{j=1}^{w(v_1)}X_{\{v_1,j\}}}{X_{\{1,v_1\}}}
    \prod_{v_2=m+2}^{w(2)} \frac{\sum_{j=1}^{w(v_2)}X_{\{j\}}}{X_{\{1\}}} \notag\\
  &=
    \prod_{v=2}^n X_{\{1,v\}} 
    \prod_{v_1=2}^m \frac{\sum_{j=1}^{w(v_1)}X_{\{v_1,j\}}}{X_{\{1,v_1\}}}
    \prod_{v_2=m+2}^n \frac{\sum_{j=1}^{w(v_2)}X_{\{j\}}}{X_{\{1\}}} \notag\\
  &=
    X_{\{1,n\}}
    \left( \prod_{v_1=2}^m X_{\{1,v_1\}} \frac{\sum_{j=1}^{w(v_1)}X_{\{v_1,j\}}}{X_{\{1,v_1\}}} \right)
    \left( \prod_{v_2=m+2}^n X_{\{1,v_2 - 1\}}\frac{\sum_{j=1}^{w(v_2)}X_{\{j\}}}{X_{\{1\}}} \right).
  \label{a-threshold-expression}
\end{align}
The second equality follows because $w(v)=1$ whenever $v>w(2)$, and
the third equality comes from redistributing most of the first product among the other two.  Now, when $v_2 > m + 1$, we have
$$
     X_{\{1,v_2 - 1\}}\frac{\sum_{j=1}^{w(v_2)}X_{\{j\}}}{X_{\{1\}}}
  = \frac{X_{1,1} X_{2,v_2-1}}{X_{1,1}} \sum_{j=1}^{w(v_2)}X_{1,j}
  = \sum_{j=1}^{w(v_2)}X_{1,j}X_{2,v_2-1}
  = \sum_{j=1}^{w(v_2)}X_{\{j,v_2-1\}}
$$
since $j \leq w(v_2) \leq m < v_2 - 1$.  Thus we may rewrite \eqref{a-threshold-expression} as
\begin{equation} \label{nearly-there}
\hat \tau_d(\Sigma) 
    = \left( \prod_{v_1=2}^m \sum_{j=1}^{w(v_1)}X_{\{v_1,j\}} \right)
      \left( \prod_{v_2=m+2}^n \sum_{j=1}^{w(v_2)}X_{\{j, v_2 - 1\}} \right)
      X_{\{1,n\}}.
\end{equation}

If $v_1 \leq m$, then $w(v_1)>v_1$, so $w(v_1)$ has degree at least $v_1$, as does every
vertex less than $w(v_1)$.  On the other hand, $\{v_1,w(v_1)+1\}\not\in\Sigma$, so
vertex $w(v_1)+1$ has degree less than $v_1$, as does every vertex greater than $w(v_1)+1$.
Therefore, $(\deg\Sigma)'_{v_1} = w(v_1)$.

Similarly, if $v_2 > m+1$, then $(\{w(v_2),v_2)\},\{w(v_2)+1, v_2\})$ is a critical pair, so
$w(v_2)$ has degree at least $v_2-1$, as does every vertex less than $w(v_2)$.  On the other hand,
vertex $w(v_2)+1$ has degree less than $v_2-1$, as does every vertex greater than $w(v_2)+1$.
Therefore, $(\deg\Sigma)'_{v_2-1} = w(v_2)$.

Using these observations to rewrite \eqref{nearly-there} in terms of
the partition $(\deg\Sigma)'$ recovers the Martin-Reiner formula \eqref{JLM-VSR}.

\subsection{From threshold graphs to Ferrers graphs}

Let $\lambda=(\lambda_1\geq\cdots\geq\lambda_\ell)$ be a partition.  The corresponding
\emph{Ferrers graph} is the bipartite graph with vertices $x_1,\dots,x_{\lambda_1},y_1,\dots,y_\ell$
and edges $\{x_iy_j \st i\leq\lambda_j\}$.  That is, the vertices correspond to rows and columns
of the Ferrers diagram of $\lambda$, and the edges to squares appearing in the diagram.
Ehrenborg and van~Willigenburg \cite{EvW} considered Ferrers graphs and (among other results)
described how a certain weighted spanning tree enumerator splits into linear factors.
Another proof of their formula can be obtained from the foregoing formulas for threshold
graphs, as we now explain.  The key idea is due to Richard Ehrenborg.

Let $G$ be a connected threshold graph on vertices~$[n]$, and let~$m$ be the side length of the Durfee square of $G$.  Then the vertices $1,2,\dots,m$ are pairwise adjacent,
while $m+1,\dots,n$ are pairwise nonadjacent.  Moreover,
if $m+1\leq i<j\leq n$, then every neighbor of $j$ is a neighbor of $i$.
Construct a graph $F$ by deleting all edges $ij$ such that $i,j\leq m$.  Then $F$ is a
Ferrers graph; furthermore, all Ferrers graphs can be constructed in this way.
Thus, if we begin with the weighted enumerator for $G$ and
set to zero all indeterminates corresponding to edges between vertices
$1,2,\dots,m$, we recover the weighted
enumerator for the corresponding Ferrers graph $F$.  Specifically,
Theorem~\ref{shifted-theorem} yields
  \begin{equation} \label{fine-threshold}
  \hat \tau(G) = X_{1,1}X_{n,2}  \prod_{i=2}^{n-1}
    \left(\sum_{r=1}^{d_i^T(G)} X_{\min(i,r),1} X_{\max(i,r),2}\right)
  \end{equation}
(this is also \cite[Theorem~4]{MR2}).
Breaking up the product in \eqref{fine-threshold} around the parameter $m$ gives
  \begin{multline*}
  \tau(G) = X_{1,1}X_{n,2} \left(\prod_{i=2}^{m}
  \left( \sum_{r=1}^{m}  X_{\min(i,r),1} X_{\max(i,r),2} +
          \sum_{r = m+1}^{d_i^T(G)}  X_{\min(i,r),1} X_{\max(i,r),2} \right)\right) \x\\
  \left( \prod_{i=m+1}^{n-1} \sum_{r=1}^{d_i^T(G)}  X_{\min(i,r),1} X_{\max(i,r),2}\right).
  \end{multline*}

This expression is well defined because $d_i^T(G) \geq m$ whenever $i\leq m$.
If $i \leq m$ and $r \leq m$, then $\max(i,r)\leq m$, and these are exactly
the terms we wish to set to zero.  Therefore,
  \begin{equation} \label{fine-ferrers}
  \tau(F) = X_{1,1}X_{n,2}
  \left(\prod_{i=2}^{m} \sum_{r = m+1}^{d_i^T(G)}  X_{i,1} X_{r,2}\right)
  \left(\prod_{i=m+1}^{n-1} \sum_{r=1}^{d_i^T(G)}  X_{\min(i,r),1} X_{\max(i,r),2}\right).
  \end{equation}

For $i \geq m+1$, we have $d_i^T(G) < m < i$.  Thus $r<i$ for $r \leq d_i^T(G)$,
and \eqref{fine-ferrers} yields
  \begin{align}
  \tau(F) &= X_{1,1} X_{n,2}
    \left( \prod_{i=2}^{m} X_{i,1} \prod_{i=2}^{m} \sum_{r=m+1}^{d_i^T(G)} X_{r,2}\right)
    \left( \prod_{i=m+1}^{n-1} X_{i,2} \prod_{i=m+1}^{n-1} \sum_{r=1}^{d_i^T(G)}  X_{r,1} \right)\notag\\
  &= (X_{1,1}X_{2,1} \ldots X_{m,1})( X_{m+1,2} X_{m+2,2} \ldots X_{n,2})
    \left(\prod_{i=2}^{m} \sum_{r=m+1}^{d_i^T(G)} X_{r,2}\right)
    \left(\prod_{i=m+1}^{n-1} \sum_{r=1}^{d_i^T(G)}  X_{r,1}\right).
  \label{final-ferrers}
  \end{align}

By construction, the vertex degrees in $G$ and $F$ are related by the formula
  $$\deg_F(i) = \begin{cases}
    \deg_G(i)-m & \text{ if } 1\leq i\leq m,\\
    \deg_G(i) & \text{ if } m+1\leq i\leq n.
  \end{cases}$$

To simplify the notation, set $X_{r,1}=x_r$ and $y_{r-m}=X_{r,2}=y_{r-m}$.  (From this perspective, the two partite sets of $F$ correspond to the indeterminates
$\{x_1, \ldots x_m\}$ and $\{y_1, \ldots y_{n-m}\}$.  Therefore,
$$\tau(F) = (x_1 \ldots x_m)( y_1 \ldots y_{n-m})
  \left( \prod_{i=2}^{m} \sum_{r=1}^{d_i^T(F_2)} y_r\right)
  \left( \prod_{i=2}^{n-m} \sum_{r=1}^{d_i^T(F_1)} x_r\right)
$$
which is Theorem~2.1 of \cite{EvW}.


\end{document}